\documentclass[leqno,11pt,a4paper]{amsart}

\usepackage{soul}
\usepackage{jc}
\newcommand{\sys}{\on{sys}}

\newcommand{\RP}{\mathbb{RP}}

\newcommand{\HH}{\mathbb H}

\newcommand{\im}{\mathrm{Im}}

\newcommand{\volHT}{\on{vol}_{\on{HT}}}

\begin{document}
\author{Julian Chaidez}
\address{Department of Mathematics\\University of Southern California\\Los Angeles, CA\\90007\\USA}
\email{julian.chaidez@usc.edu}

\author{Shira Tanny}
\address{Department of Mathematics\\Weizmann Institute of Science\\Rehovot\\76100\\ Israel}
\email{tanny.shira@gmail.com}


\title[The Pu Inequality And The Viterbo Conjecture With Finite Symmetry]{The Pu Inequality And The Viterbo Conjecture With Finite Symmetry}


\vspace*{-30pt}
\maketitle

\vspace*{-25pt}

\begin{abstract} 
We study a construction for producing Finsler metrics on the sphere from convex bodies in $\R^4$ using the quaternions. Among several applications, we prove the weak Viterbo conjecture for convex bodies in $\R^4$ that are invariant under the natural action of the quaternion group $Q$. This is the first instance where Viterbo's systolic inequality is established for convex bodies with a finite group symmetry, addressing a question of Haim-Kislev-Ostrover. Moreover, a smooth $Q$-symmetric convex domain that achieves equality is symplectomorphic to the ball, without any assumption of strict or uniform convexity. Finally, we show that any link of minimal Reeb orbits on the boundary of a uniformly convex $Q$-symmetric convex body is transversely isotopic to a union of Hopf fibers, strengthening a result of Abbondandolo-Edtmair-Kang in the $Q$-symmetric setting. 
\end{abstract}
\section{Introduction} \label{sec:intro}

 In the seminal work \cite{v2000}, Viterbo introduced his eponymous conjectural systolic inequality for convex domains in $\R^{2n}$, which subsequently emerged as one of the fundamental open problems in quantitative symplectic geometry. More specifically, recall that the \emph{symplectic systole}
\[
\sys(X) \in (0,\infty) \qquad\text{of a smooth star-shaped domain }X \subset \R^{2n}
\]
is the minimum symplectic area of a disk $D \subset \R^{2n}$ whose boundary is both contained in $\partial X$ and tangent to the characteristic foliation of $\partial X$. The Viterbo conjecture can be stated as follows.

\begin{conjecture*}[Weak Viterbo] \label{conj:viterbo} Every convex domain $X \subset \R^{2n}$ satisfies the following systolic inequality. 
\begin{equation} \label{eq:viterbo_inequality} \hspace{40pt} \frac{\sys(X)}{\sys(B)} \leq \Big(\frac{\on{vol}(X)}{\on{vol}(B)}\Big)^{1/n} \qquad\text{where $B \subset \R^{2n}$ is the standard $2n$-ball.}\end{equation}
\end{conjecture*}

\noindent We refer to this bound as the \emph{Viterbo inequality}. For convex bodies, the symplectic systole coincides with the Ekeland-Hofer-Zehnder capacity, and a stronger version of the above conjecture states that every symplectic capacity satisfies Conjecture \ref{conj:viterbo} on convex bodies. 

\vspace{3pt}

In \cite{v2000}, Viterbo proved that Conjecture \ref{conj:viterbo} holds up to a multiplicative factor dependent on the dimension. Subsequent work on the conjecture established many special cases \cite{gutt2022examples,cristofaro2023agreement,abbondandolo2018sharp,abbondandolo2023local,abbondandolo2025symplectic,rudolf2022viterbo,karasev2019viterbo}. Connections were found between the Viterbo conjecture, the Mahler conjecture \cite{artstein2014symplectic} and billiard dynamics \cite{artstein2014bounds,balitskiy2020equality,akopyan2016elementary}. Combinatorial methods were established for studying the systoles of polytopes \cite{chaidez2021computing,haim2019symplectic}. Finally, improvements on Viterbo's original result were found using methods from convex geometry \cite{artstein2008m}. However, in spite of the preponderance of evidence, Conjecture \ref{conj:viterbo} was recently refuted by Haim-Kislev-Ostrover \cite{haim2024counterexample}.

\begin{thm*}[Haim-Kislev-Ostrover] There exist convex domains that violate the Viterbo inequality.
\end{thm*}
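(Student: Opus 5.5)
The plan is to follow the Haim-Kislev-Ostrover construction: find an explicit convex body in $\R^4 = \C^2$ whose systole can be computed exactly and whose volume is small. The natural candidate is a Lagrangian product $X = K \times T \subset \R^2_q \times \R^2_p$, where $K$ is a regular pentagon and $T$ is $K$ rotated by $90^\circ$. For such products the closed characteristics correspond to Minkowski billiard trajectories in $K$ with the geometry of $T$. Consequently,
\[
\sys(K\times T) = \min \{ \text{$T^\circ$-length of closed $(K,T)$-billiard trajectories} \}.
\]
By Rudolf's reduction, this minimum may be taken over trajectories with at most $3$ bounce points in dimension $4$. The volume is elementary: $\on{vol}(K\times T) = \on{area}(K)\,\on{area}(T) = \on{area}(K)^2$. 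Since $\sys(B)=\pi$ and $\on{vol}(B)=\pi^2/2$, the Viterbo inequality becomes
\[
\sys(X)^2 \le 2\,\on{vol}(X).
\]
So the goal is to show $\sys(K\times T)^2 > 2\,\on{area}(K)^2$.

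The key steps, in order, are as follows.

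\textbf{Step 1.} Reduce the computation of $\sys(K\times T)$ to a finite combinatorial optimization. I would invoke the known results that the minimal action orbit on a polytope product is a Minkowski billiard with $2$ or $3$ bounces. For each combinatorial type of bounce sequence (pairs or triples of edges of $K$, with reflection directions determined by the vertices of $T$), the action becomes an explicit piecewise-linear function. Alternatively, I could use the combinatorial formula of Haim-Kislev for polytopes, which expresses $\sys$ as a minimum over orderings of facets. The dihedral symmetry of the pentagon cuts the number of cases drastically.

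\textbf{Step 2.} Evaluate each candidate. For the $90^\circ$-rotated pentagon, the $2$-bounce orbits go between parallel-ish supporting lines in width directions. The $3$-bounce orbits should be checked to be no shorter. The expected result is a value like $\sys = 2\cos(\pi/10)(1+\cos(\pi/5))$ for a unit circumradius normalization, while $\on{area}(K) = \tfrac{5}{2}\sin(2\pi/5)$.

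\textbf{Step 3.} Compare the two sides numerically with exact algebraic values. The ratio $\sys^2/(2\on{vol})$ comes out as $(\sqrt5+3)/5 > 1$, which exceeds $1$ strictly.

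\textbf{Step 4.} Smoothing. Since the systole is continuous in the Hausdorff topology on convex bodies, a small smoothing of $X$ still violates the inequality. This yields smooth strictly convex counterexamples as well.

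The main obstacle is Step 1/2: rigorously certifying that no closed characteristic (including gliding or non-billiard-type generalized orbits on lower-dimensional faces) has smaller action than the claimed minimum. I would handle this with the Haim-Kislev facet-ordering formula, which is a provably complete finite search. I would then use symmetry to enumerate orderings by hand, or cite a verified computer enumeration, checking that every ordering gives action at least the $2$-bounce value.
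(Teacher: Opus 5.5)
The paper gives no proof of this statement beyond citing Haim-Kislev--Ostrover and describing their example as a Lagrangian product of two regular pentagons. Your proposal reconstructs exactly that argument: the product of a regular pentagon with its $90^\circ$ rotation, Rudolf's characterization of $\sys(K\times T)$ by closed Minkowski billiard trajectories with at most three bounces, and the values $\sys = 2\cos(\pi/10)(1+\cos(\pi/5))$, $\on{area}(K)=\tfrac52\sin(2\pi/5)$ and $\sys^2/(2\,\on{vol}) = (3+\sqrt5)/5$, all of which are correct.

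The only substantive step you leave open is the one you flag yourself: certifying that no three-bounce trajectory is shorter. This finite check is what Haim-Kislev--Ostrover supply. It is far more tractable via your edge-triple billiard case analysis than via a hand enumeration of the roughly $10!$ facet orderings in Haim-Kislev's formula. The two-bounce case is in fact exact in every direction $u$. The minimal length there is the radial function $\rho_{K-K}(u)$ times the support function $h_{T-T}(u)$, and this product is constant: $K-K$ is a regular decagon, and $T-T$, its $90^\circ$ rotation, is a positive multiple of its polar.
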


\noindent More precisely, \cite{haim2024counterexample} illustrated a specific example of a Lagrangian product of two equilateral pentagons whose Ekeland-Hofer-Zehnder capacity violates Conjecture \ref{conj:viterbo}. This remarkable discovery naturally led to several new questions. For instance, the Viterbo inequality is satisfied by convex domains that are sufficiently symmetric (e.g. invariant under the standard $S^1$-action on $\R^{2n} = \C^n$ \cite{gutt2022examples}). This motivates the following question asked by Haim-Kislev-Ostrover.

\begin{question*} \label{qu:vague_symmetry} How much symmetry guarantees that a convex body satisfies the Viterbo inequality?
\end{question*}

In this paper, we give the first example of a finite group, namely the quaternion group $Q$, such that $Q$-symmetric convex bodies satisfy the (sharp) Viterbo inequality in dimension four.  Moreover, a smooth $Q$-symmetric convex domain that achieves equality is symplectomorphic to the ball, without any assumption of strict or uniform convexity. We also prove a new state-of-the-art systolic inequality for convex bodies in $\C^2$ that are invariant under multiplication by $i$, and we reduce the Viterbo inequality for such domains to a conjectural systolic inequality for non-reversible Finsler metrics on the projective plane. These results are applications of a construction for producing Finsler metrics on $S^2$ from convex bodies using the quaternions. 

\subsection{From Convex Bodies To Finsler Metrics} We begin by sketching the construction of Finsler metrics from convex bodies using the quaternions, which will be carried out in Section \ref{sec:quaternion_covering}. Related constructions have appeared previously in \cite{harris2008dynamically,hryniewicz2013global}. Recall that the \emph{quaternion algebra} $\mathbb{H}$ is the algebra over $\mathbb{R}$ generated by three generators
\[
\text{$I$, $J$ and $K$} \qquad\text{with relations}\qquad I^2 = J^2 = K^2 = IJK = -1.
\]
There is a standard inner product, compatible symplectic structure and radial Liouville form induced by the complex structure $I$. These structures all agree with the corresponding structures on $\mathbb{R}^4$ under the standard identification $\mathbb{H} = \mathbb{R}^4$. Finally, recall that the \emph{quaternion group}
\[
Q \subset \mathbb{H}
\]
is the group of order eight generated by $I$ and $J$, which consists of $1,I,J,K$ and their negatives.

\vspace{3pt}

The main tool in this paper is the construction of a Finsler metric on the 2-sphere
\[
F \qquad\text{associated to any centrally symmetric convex body $X \subset \HH$}.
\]
In turn, this construction uses a certain {quaternion covering map} from the group of units of the quaternions to the complement of the zero section in the cotangent bundle of the 2-sphere.
\[\pi:\HH^\times \to T^*S^2 - S^2\]
This map has a number of key properties. First, it is a strict exact symplectic map that respects the natural Liouville forms on both sides. Second, the map commutes with the $Q$-action in the sense that multiplication by $I$ corresponds to the lift of the antipodal map $S^2 \to S^2$ to $T^*S^2$, and multiplication by $J$ corresponds to the map that negates vectors in the cotangent fibers. Finally, the map respects a certain kind of fiberwise convexity, defined as follows. 

\begin{definition*}[Convex Along Lines] A domain $X \subset \mathbb{H}$ is \emph{convex along $J$-complex lines} if $\partial X$ is transverse to the radial vector field, and $L \cap X$ is convex for each $J$-complex line $L$ (i.e. each $J$-invariant linear subspace of dimension two).
\end{definition*}

\noindent Note that $J$-complex lines are Lagrangian with respect to the symplectic form $\Omega$ determined by $I$. These lines play the role of the cotangent fibers upstairs in the covering map, and the map sends a given $J$-complex line to a cotangent fiber. Due to these properties of the quaternion covering, the boundary $\partial X$ of a centrally symmetric domain $X$ that is convex along $J$-complex lines is sent by $\pi$ to the unit cosphere bundle
\[
S^*_FS^2 \qquad\text{of a Finsler metric $F$ on $S^2$.}
\]
Moreover, if the original domain is $I$-invariant, then $F$ descends to the projective plane $\mathbb{RP}^2$ and if the original domain is $J$-invariant, then $F$ is reversible. This yields the desired correspondence.

\begin{figure}[h!]
    \centering
    \includegraphics[width=.9\linewidth]{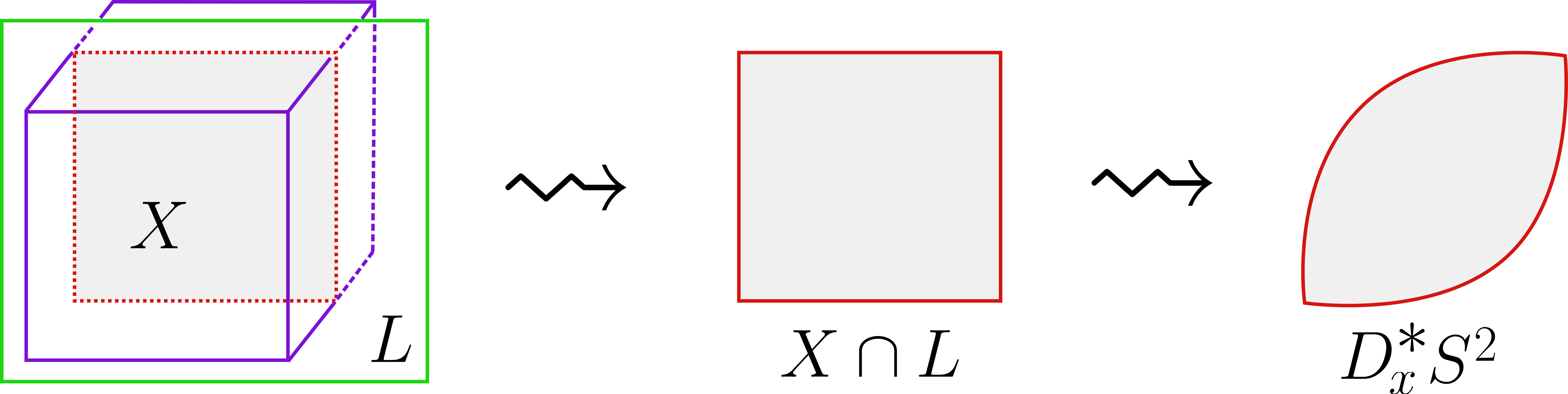}
    \caption{A depiction of the correspondence between bodies and metrics. }
    \label{fig:metric_construction}
\end{figure}

\begin{remark*}[Depiction] A cartoon of the construction is given in Figure \ref{fig:metric_construction}. There, a convex body $X \subset \HH$ is depicted as a 3-dimensional cube. The unit codisk $D^*_xS^2$ of the corresponding metric $F$ at a point $x$ in $S^2$ is given by taking the $J$-complex line $L = \pi^{-1}(T^*_xS^2)$ given by the fiber of $\pi$ over $x$, and taking the image of the slice $X \cap L$ under $\pi$. The map on fibers is (up to linear isomorphism) equivalent to the map $z \mapsto z^2$. \end{remark*}

\begin{remark*}[Generalizations] \label{rmk:generalization_to_fiberwise} Many of our theorems directly generalize to smooth domains that are centrally symmetric and convex along $J$-complex lines. We will state and prove these generalizations in Sections \ref{sec:quaternion_covering}-\ref{sec:applications}.
\end{remark*}

\begin{remark*}[Squared Metrics] A key observation for several of our results is that the dual Finsler metrics associated to convex bodies are \emph{squared} in the following sense. We say that a planar convex body $X$ is \emph{squared} if it is affine equivalent to a convex body of the form $\sigma(U)$ where $U \subset \C$ is a centrally symmetric convex body in the complex plane and $\sigma:\C \to \C$ is the squaring map $\sigma(z) = z^2$. An (asymmetric) norm on a 2-dimensional vector space is squared if its unit ball is squared, and a Finsler metric is squared if the fiberwise norms are squared. 

\vspace{3pt}

Squared convex bodies and norms have several special properties that we will exploit in our applications. For example, a smooth squared norm has a uniformly convex unit ball, and this is needed to show that the fiberwise norms acquired from our Finsler metric construction satisfy the strong convexity property typically required for Finsler metrics (see Definition \ref{def:finsler_metric}). We will also exploit the fact that triangles are \emph{not} squared domains to improve certain systolic estimates, specifically in Theorem \ref{thm:intro_I_invariant_systolic} below. It is interesting to ask if squared Finsler metrics have further special properties that can be exploited in this setting. \end{remark*}

\subsection{Applications To Symplectic Systolic Inequalities Via Pu Inequalities} Our construction of Finsler metrics from centrally symmetric convex bodies permits us to apply metric systolic inequalities on the projective plane to prove new systolic inequalities for symmetric convex bodies in dimension four. Let us now discuss these applications. Detailed proofs will be given in Section \ref{sec:applications}.

\vspace{3pt}

We first consider domains $X$ that are invariant under the action of the quaternion group $Q$. In this case, the body corresponds to a reversible Finsler metric on $\mathbb{RP}^2$. Thus by applying the Pu inequality for reversible Finsler metrics proven by Ivanov \cite{ivanov2011filling}, we acquire the following theorem. 

\begin{thm*}[$Q$-Symmetric Viterbo Conjecture] \label{thm:intro_Q_Viterbo}
    Let $X$ be a convex domain in $\mathbb{H}$ that is invariant under the action of the quaternion group $Q$ by right multiplication. Then $X$ satisfies the Viterbo inequality. 
\end{thm*}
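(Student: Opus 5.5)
We will derive the inequality from the quaternion covering $\pi:\HH^\times \to T^*S^2 - S^2$ and Ivanov's Pu inequality for reversible Finsler metrics on $\RP^2$.

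\textbf{Reduction to smooth domains and the metric.} First reduce to the case where $X$ is smooth and uniformly convex. We approximate $X$ in the Hausdorff topology by smooth, uniformly convex $Q$-invariant bodies, for instance by averaging a standard smoothing over $Q$. Both $\sys$ (which equals the EHZ capacity on convex bodies) and $\on{vol}$ are continuous in the Hausdorff topology, so the inequality passes to the limit. Such an $X$ is centrally symmetric, since $-1 \in Q$. It is also convex along $J$-complex lines, since the intersection of a convex set with a linear subspace is convex. The construction therefore gives a Finsler metric $F$ on $S^2$ with $\pi(\partial X) = S^*_F S^2$. Invariance under $J$ makes $F$ reversible, and invariance under $I$ lets $F$ descend to a reversible Finsler metric $\bar F$ on $\RP^2$. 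Smoothness of the fiberwise norms together with the squared property supplies the strong convexity needed for $\bar F$ to be an honest Finsler metric.

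\textbf{Matching volumes and systoles.} Next relate the invariants of $X$ to those of $\bar F$. Since $\pi$ is a strict exact symplectic covering, it preserves the Liouville form, so $\on{vol}(X) = \frac12\int_{\partial X}\lambda\wedge d\lambda$ is a fixed multiple of the Holmes--Thompson volume $\volHT(S^2,F)$. That multiple is the degree of $\pi$ restricted to $\partial X \to S^*_FS^2$, which equals $2$ since the fiber map is $z\mapsto z^2$. In turn $\volHT(S^2,F) = 2\volHT(\RP^2,\bar F)$.

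For the systole, closed characteristics on $\partial X$ map to closed Reeb orbits on $S^*_FS^2$, with the same action, because the map is strict. Those Reeb orbits are cogeodesics of $F$, and their action is $F$-length. A closed geodesic $\gamma$ on $\RP^2$ lifts to $S^2$ either as a closed geodesic or as a path that closes up after covering twice, and its cotangent lift then lifts through $\pi$ to a closed curve on $\partial X$, possibly after passing to a further double cover. The task is to show that the shortest noncontractible closed geodesic of $\bar F$ lifts to a closed characteristic of $\partial X$ of action equal to its length, and not twice that. For this we track the monodromy of $\pi$ against the $I$-action, which covers the antipodal map. The check is that $\pi^{-1}$ of the lift of a noncontractible loop closes up on $\partial X$ after one traversal. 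We also need the lower bound on $\sys(X)$ to come from the shortest noncontractible loop, and we handle this by using that every closed characteristic projects to some closed geodesic of $\bar F$. Consistency with the round ball $B$ is a sanity check: $B$ gives the round metric, with $\sys(\RP^2) = \pi$ and $\on{vol}(\RP^2) = 2\pi$, and $\sys(B) = \pi$, so the normalizing constants must match there. We fix them by computing both sides on $B$.

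\textbf{Conclusion and main obstacle.} Ivanov's inequality gives $\sys(\bar F)^2 \le \frac{\pi}{2}\volHT(\RP^2,\bar F)$, and this is sharp for the round metric. Translating it through the two identities above, normalized against $B$, yields $\sys(X)/\sys(B) \le (\on{vol}(X)/\on{vol}(B))^{1/2}$.

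I expect the main obstacle to be the systole identification. We must show that minimal closed characteristics of $\partial X$ correspond exactly to the shortest noncontractible geodesics of $\bar F$, with no factor-of-two loss from the double covers in $S^2 \to \RP^2$ and in $\pi$ along the fibers. Strictly, only the inequality $\sys(X) \le \sys(\bar F)$ with the correct constant is needed.
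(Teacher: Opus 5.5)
Your overall route is the paper's. You approximate by smooth $Q$-invariant bodies, pass through the cosphere covering to a reversible Finsler metric $\bar F$ on $\RP^2$, use $\on{vol}(X) = 4\volHT(\RP^2,\bar F)$, and apply Ivanov--Pu. The gap is in the systole step, which you rightly call the crux, but whose target you set up wrongly.

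The composite $\partial X \to S^*_{\bar F}\RP^2$ is a $4$-sheeted covering of $L(4,1)$ by $S^3$, i.e.\ the universal cover, with deck group generated by right multiplication by $I$. The fiber of $S^*\RP^2 \to \RP^2$ represents $2 \in \pi_1(S^*\RP^2)\cong \Z/4$, so the cotangent lift of a noncontractible loop represents a generator. Its preimage in $\partial X$ is therefore a single closed characteristic that closes up only after \emph{four} traversals, with action $4\ell$ where $\ell$ is the length. It does not close after one traversal with action $\ell$.

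Likewise, $B$ does not correspond to the round metric. It corresponds to $\frac{1}{16}G_{\on{std}}$, because $|\psi_2(z)| = |z|^2/4$. Hence $\sys(\RP^2,\bar F_B) = \pi/4$ while $\sys(B) = \pi$, so your target inequality $\sys(X)\le \sys(\RP^2,\bar F)$ already fails for the ball. Your constants are also mixed. In the symplectic normalization of $\volHT$ that makes your volume identity correct, Ivanov--Pu reads $\sys^2\le \frac12\volHT$ and $\volHT(\RP^2,G_{\on{std}}) = 2\pi^2$, not $\frac{\pi}{2}$ and $2\pi$.

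Because the Viterbo inequality is sharp, the constant $c$ in $\sys(X)\le c\cdot \sys(\RP^2,\bar F)$ is the entire content, and the right value is $c=4$. This is not a loss. It also needs no monodromy tracking and no lower bound on $\sys(X)$.

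As in the paper, take the closed Reeb orbit over a shortest noncontractible geodesic of length $\ell$. Its preimage under the $4$-sheeted strict contact covering is a union of closed characteristics of total action $4\ell$, so $\sys(X)\le 4\,\sys(\RP^2,\bar F)$. Then
\[
\sys(X)^2 \le 16\,\sys(\RP^2,\bar F)^2 \le 8\,\volHT(\RP^2,\bar F) = 2\,\on{vol}(X),
\]
which is the Viterbo inequality since $\sys(B)^2/\on{vol}(B)=2$.

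Calibrating constants on $B$ cannot replace this step. A single example fixes the constant only if you already have a universal comparison with equality at $B$, and the total-action argument is exactly what supplies that comparison.
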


A stronger version of the Viterbo conjecture stated that any convex body achieving equality in the Viterbo inequality has interior symplectomorphic to the open ball. The counter-example of Haim-Kislev-Ostrover \cite{haim2024counterexample} can be used to easily construct a counter example to this conjecture. On the contrary, our methods show that this conjecture holds in the smooth $Q$-symmetric setting.

\begin{thm*}[$Q$-Symmetric Optimizers] \label{thm:intro_Q_optimizers} A $Q$-invariant smooth convex domain $X$ in $\mathbb{H}$ achieves equality in the Viterbo inequality if and only if $X$ is symplectomorphic to the ball $B$.\end{thm*}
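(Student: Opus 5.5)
The plan is to turn the equality case of the Viterbo inequality into the equality case of the Finsler Pu inequality on $\RP^2$, and then to recognize optimal metrics by their dynamics. The direction ``symplectomorphic to $B$ $\Rightarrow$ equality'' is immediate. The systole of a convex domain equals its Ekeland--Hofer--Zehnder capacity, which is a symplectic invariant, and volume is preserved by symplectomorphisms. Hence $\sys(X)=\sys(B_r)$ and $\on{vol}(X)=\on{vol}(B_r)$ for the ball $B_r$ to which $X$ is symplectomorphic, and balls satisfy (\ref{eq:viterbo_inequality}) with equality.

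For the converse, let $X$ be smooth, convex and $Q$-invariant, and apply the construction of Section \ref{sec:quaternion_covering}. Since $X$ is centrally symmetric and convex along $J$-complex lines, $\pi(\partial X)$ is the unit cosphere bundle of a Finsler metric $F$ on $S^2$. Because $X$ is $I$-invariant, $F$ descends to a metric $\bar F$ on $\RP^2$, and because $X$ is $J$-invariant, $\bar F$ is reversible. The induced fiberwise norms are squared, so by the remark on squared metrics they are smooth and uniformly convex, and $\bar F$ is a genuine smooth Finsler metric.

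Because $\pi$ is a strict exact symplectic map respecting the Liouville forms, the Reeb flow on $\partial X$ corresponds to the cogeodesic flow of $\bar F$, after passing to the quotient by $Q$. The key bookkeeping, used already in the proof of the $Q$-Symmetric Viterbo Theorem, has two parts.
\begin{itemize}
\item Closed Reeb orbits on $\partial X$ of action $a$ correspond to closed geodesics of $\bar F$ of a fixed multiple of $a$. Minimal ones correspond to shortest non-contractible geodesics, so $\sys(X)$ is a fixed constant times $\sys(\bar F)$.
\item $\on{vol}(X)$ is a fixed constant times the Holmes--Thompson area of $\bar F$.
\end{itemize}
Equality for $X$ in (\ref{eq:viterbo_inequality}) therefore becomes equality in Ivanov's Pu inequality $\sys(\bar F)^2\le \tfrac{\pi}{2}\,\volHT(\bar F)$.

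The main step is the rigidity statement for Ivanov's inequality. I would revisit Ivanov's filling-area argument, which is integral-geometric, averaging over the space of geodesics, to show that equality forces every geodesic of $\bar F$ to be closed, non-contractible and of length exactly $\sys(\bar F)$. Concretely, the estimate comes from an inequality between an average over geodesics and a minimum, and equality in such an averaging step should force the cogeodesic flow on $S^*_{\bar F}\RP^2$ to be periodic with constant minimal period. This is where I expect the real difficulty. If Ivanov's paper states the equality case only for special classes, I would instead derive it directly from the volume formula, expressing the contact volume of $S^*\RP^2$ as the product of the systole and a Crofton-type measure of the space of shortest geodesics. That route must rule out the possibility that only a positive-measure family of geodesics is minimal.

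Lifting back through $\pi$ and the $Q$-action, this periodicity says that the Reeb flow on $\partial X$ is Zoll, meaning every orbit is closed of the same minimal action $\sys(X)$. The flow also has the orbit-covering structure of the Hopf fibration, since the preimage of a non-contractible geodesic on $\RP^2$ is connected in $\partial X$. Finally, I would invoke the known classification of Zoll contact forms on $S^3$, which says that a Zoll contact form is strictly contactomorphic to a multiple of the standard one (Abbondandolo--Bramham--Hryniewicz--Salomão). Extending this contactomorphism radially along the Liouville flow gives a symplectomorphism from $X$ onto the ball with the same systole.

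This final step uses only smoothness of $\partial X$, not strict or uniform convexity of $X$. The uniform convexity it needs at the metric level is supplied automatically by the squared-norm property.
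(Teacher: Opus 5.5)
Your plan is the paper's proof: pass to the reversible metric $\bar F$ on $\RP^2$, turn equality in the Viterbo inequality into equality in the Ivanov--Pu inequality, conclude that $\bar F$ is Zoll, lift this to a Zoll Reeb flow on $\partial X$, and finish with Proposition~\ref{prop:zoll_iff_ball}. The rigidity step you single out as the main difficulty needs no re-derivation, and your averaging/Crofton sketch would not prove it on its own: it is exactly the equality clause of Theorem~\ref{thm:ivanov_pu}, which applies because the squared fibers make $\bar F$ smooth and strongly convex.

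One correction to your bookkeeping. For a merely smooth convex $X$ you only know $\sys(X)\le 4\,\sys(\RP^2,\bar F)$; the identification of minimal Reeb orbits with lifts of systolic geodesics (Theorem~\ref{thm:minimizer_bijection}) is only established under uniform convexity. This weaker inequality suffices, because equality in the chain $\sys(X)^2\le 16\,\sys(\RP^2,\bar F)^2\le 8\,\volHT(\RP^2,\bar F)=2\on{vol}(X)$ forces both equality in Pu and $\sys(X)=4\,\sys(\RP^2,\bar F)$. The latter is what the paper uses to show that every closed Reeb orbit of $S^*_{\bar F}\RP^2$ lifts to a single orbit of period $\sys(X)$.
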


\noindent It was previously known that any smooth and strictly convex domain in $\R^{2n}$ that locally maximizes the systolic ratio must have Zoll Reeb flow on its boundary\cite{AlvarezPaivaBalacheff2014}, and thus must be a ball in dimension four \cite{AbbondandoloBramhamHryniewiczSalomao2018}. The novelty of Theorem \ref{thm:intro_Q_optimizers} is that it drops any assumption of strict or uniform convexity, which is a property required by previous results of this type. We expect that our methods may also be applicable to the full (non-smooth) case. We will discuss this later in the introduction.

\vspace{3pt}

We next consider bodies $X$ that are invariant only under multiplication by $I$. In this case, the body corresponds to a non-reversible Finsler metric on $\mathbb{RP}^2$. It is conjectured (cf. \cite{alonsooptimal,alonso2024survey}) that the Ivanov-Pu inequality extends to non-reversible metrics, yielding a conditional result.

\begin{thm*}[Conditional $I$-Symmetric Viterbo]\label{thm:intro_conditional} If the Ivanov-Pu inequality extends to non-reversible Finsler metrics on $\mathbb{RP}^2$, then any $I$-symmetric convex body in $\HH$ satisfies the Viterbo inequality. Moreover, equality occurs in the smooth case if and only if the body is symplectomorphic to a ball.
\end{thm*}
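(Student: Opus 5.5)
The plan is to transfer the problem to the projective plane via the quaternion covering $\pi:\HH^\times \to T^*S^2 - S^2$, and then use the assumed non-reversible Ivanov--Pu inequality exactly as the reversible one is used for Theorem \ref{thm:intro_Q_Viterbo}.

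\textbf{Step 1: build the Finsler metric.} Let $X$ be an $I$-symmetric convex body; after translation it contains the origin in its interior. Since $I^2=-1$, $I$-invariance gives central symmetry. A convex body is convex along every $J$-complex line, and $\partial X$ is transverse to the radial field. So the construction of Section \ref{sec:quaternion_covering} produces a Finsler metric $F$ on $S^2$ with $\pi(\partial X)=S^*_FS^2$. Because $X$ is $I$-invariant, $F$ is invariant under the antipodal map and descends to a (generally non-reversible) metric $\bar F$ on $\RP^2$. In the smooth case, the fiberwise norms are squared, hence uniformly convex, so $\bar F$ is a genuine Finsler metric. The non-smooth case will be handled by approximation at the end.

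\textbf{Step 2: compare the invariants.} Because $\pi$ is a strict exact symplectic covering respecting the Liouville forms, I would identify the following.
\begin{itemize}
\item \emph{Volume.} $\on{vol}(X)$ is a fixed multiple of the Holmes--Thompson volume $\volHT(\RP^2,\bar F)$. The constant is the degree of $\partial X \to S^*_F S^2 \to S^*_{\bar F}\RP^2$: the covering $\pi$ is $2$-to-$1$ ($z\mapsto z^2$ on fibers) and the antipodal quotient contributes another factor of $2$.
\item \emph{Systole.} Closed characteristics on $\partial X$ project to closed Reeb orbits of $S^*_{\bar F}\RP^2$, i.e.\ closed $\bar F$-geodesics, and the action is preserved because the Liouville forms agree. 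The key point is that the minimal action orbit of $\partial X$ projects to a loop whose lift to $S^2$ does not close. This is the $2$-to-$1$ structure: a loop in $S^*_F S^2$ whose lift to $\HH^\times$ closes corresponds to a geodesic loop traversed in a specific way. I expect the outcome to be $\sys(X) = c\cdot \sys_{nc}(\bar F)$, the length of the shortest non-contractible closed geodesic on $\RP^2$, for an explicit constant $c$.
\end{itemize}
I would check both constants on the round ball $B$, which corresponds to the round metric on $\RP^2$, the equality case of Pu's inequality. This calibrates the normalizations, so the Viterbo ratio $\sys(X)^2/\on{vol}(X)$ becomes a universal multiple of $\sys_{nc}(\bar F)^2/\volHT(\bar F)$, with the ball matching the round metric.

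\textbf{Step 3: conclude.} The hypothesized non-reversible Ivanov--Pu inequality bounds $\sys_{nc}(\bar F)^2/\volHT(\bar F)$ by its round value, which gives the Viterbo inequality for smooth $X$. For general convex $X$, I would approximate by smooth $I$-symmetric convex bodies; both $\sys$ and $\on{vol}$ are continuous in the Hausdorff topology on convex bodies, so the inequality passes to the limit.

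\textbf{Equality case.} If a smooth $X$ attains equality, then $\bar F$ attains equality in the extended Pu inequality. I would assume the extension includes its expected rigidity statement: equality forces every geodesic to be closed with minimal length, i.e.\ the metric is Zoll. Lifting back through $\pi$, the Reeb flow on $\partial X$ is then Zoll, and the four-dimensional Zoll rigidity theorem of Abbondandolo--Bramham--Hryniewicz--Salom\~ao gives that $X$ is symplectomorphic to $B$; no strict convexity is needed, since that argument uses only the Zoll dynamics on the star-shaped boundary. The converse is immediate.

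\textbf{Main obstacle.} The hardest step is the systole identification in Step 2: showing that the minimal closed characteristic on $\partial X$ corresponds exactly to a shortest non-contractible geodesic of $\bar F$, and not to a contractible one or a double cover. I would try to settle this by computing $\pi_1$ of $\partial X \cong S^3$ against $\pi_1(S^*\RP^2)=\mathbb{Z}/4$, and tracking which free homotopy classes of Reeb orbits lift to closed orbits on $\partial X$.
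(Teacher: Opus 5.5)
Your outline is the paper's argument. The paper proves this by rerunning the proofs of Theorems \ref{thm:generalized_Q_symetric_Viterbo} and \ref{thm:generalized_Q_symmetric_optimizers} with Conjecture \ref{conj:nonreversible_pu} in place of Theorem \ref{thm:ivanov_pu}. It uses $\on{vol}(X)=4\volHT(\RP^2,\bar F)$ from the $4$-sheeted strict contact covering $\bar\pi:\partial X\to S^*_{\bar F}\RP^2$, and then approximates non-smooth bodies.

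\textbf{The step you call the main obstacle is unnecessary.} You do not need $\sys(X)=4\sys(\RP^2,\bar F)$, only $\sys(X)\le 4\sys(\RP^2,\bar F)$, and that is immediate. The cotangent lift of a shortest non-contractible geodesic (Theorem \ref{thm:sys_spectral}) is a closed Reeb orbit of period $\ell=\sys(\RP^2,\bar F)$. Its preimage under $\bar\pi$ is a union of closed Reeb orbits on $\partial X$ of total period $4\ell$. Combined with the conjecture, this gives
\[
\sys(X)^2\le 16\,\sys(\RP^2,\bar F)^2\le 8\,\volHT(\RP^2,\bar F)=2\on{vol}(X).
\]

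\textbf{Your proposed method could not prove the equality anyway.} The $\pi_1$ bookkeeping does show that an orbit on $\partial X$ lying over a primitive non-contractible geodesic of length $\ell$ has period a multiple of $4\ell$, since that lift has odd class in $\pi_1(S^*_{\bar F}\RP^2)=\Z/4$. But a contractible closed geodesic has lift of class $0$ or $2$. Its preimages therefore have period $\ell$ or $2\ell$, and nothing topological compares this with $\sys(\RP^2,\bar F)$. Excluding such orbits needs dynamical input such as Theorem \ref{thm:karasev}. The paper uses that only for uniformly convex bodies, in Theorem \ref{thm:minimizer_bijection}.

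\textbf{In the equality case the identity comes for free.} It is also exactly what justifies your sentence that ``lifting back through $\pi$, the Reeb flow on $\partial X$ is Zoll.''
\begin{itemize}
\item Equality in the Viterbo inequality forces equality throughout the chain above. So $\sys(X)=4\sys(\RP^2,\bar F)$, and $\bar F$ is Zoll by the rigidity part of the conjecture.
\item Every geodesic is then closed with common period $\ell=\sys(\RP^2,\bar F)$, since a systolic geodesic is one of them.
\item The preimage of each such lift is a union of orbits of total period $4\ell=\sys(X)$. If it had more than one component, one of them would have period less than $\sys(X)$, which is impossible. So it is a single orbit of minimal period.
\item Hence every point of $\partial X$ lies on a closed Reeb orbit of minimal period, and Proposition \ref{prop:zoll_iff_ball} finishes the argument.
\end{itemize}
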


\noindent Regardless, we can use a symmetrization argument and a stable version of the Rogers-Shephard inequality in convex geometry to prove the following state-of-the-art bound.

\begin{thm*}[$I$-Symmetric Systolic Bound] \label{thm:intro_I_invariant_systolic} Let $X$ be a convex domain in $\HH$ that is invariant under right multiplication by $I$. Then $X$ satisfies the following systolic inequality.
\[
\frac{\sys(X)}{\sys(B)} \le C \Big(\frac{\on{vol}(X)}{\on{vol}(B)}\Big)^{1/2} \qquad\text{where $C$ is a constant less than}\qquad \Big(\frac{3}{2}\Big)^{1/2} \simeq 1.2247
\]
\end{thm*}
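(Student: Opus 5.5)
The plan is to pass through the Finsler metric on $\RP^2$ and then symmetrize. Let $X$ be $I$-invariant. Then $X$ is centrally symmetric, since $I^2=-1$, and it is convex. The quaternion covering $\pi$ therefore sends $\partial X$ to the unit cosphere bundle of a squared Finsler metric $F$ on $S^2$, and $I$-invariance makes $F$ descend to a possibly non-reversible metric on $\RP^2$. Since $\pi$ is strict exact symplectic and intertwines the Reeb flows, closed characteristics on $\partial X$ correspond to closed geodesics of $F$. The minimal action $\sys(X)$ is then governed by the length of the shortest closed geodesic of $F$ on $\RP^2$, which is non-contractible; in the Pu setting this is the systole. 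The Holmes–Thompson volume $\volHT(F)$ is proportional to $\on{vol}(X)$, with the constant fixed by the degree of the covering. I would calibrate all constants on the round ball $B$, which gives the round metric. This reduces the statement to a systolic bound
\[
\sys(F)^2 \le C^2\cdot \tfrac{\pi}{2}\,\volHT(F)
\]
for squared non-reversible Finsler metrics on $\RP^2$, with the normalization that Pu/Ivanov equality holds for the round metric.

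To prove this bound, symmetrize. Let $\bar F(v)=\tfrac12(F(v)+F(-v))$. This is a reversible Finsler metric, and its fiberwise unit codisk is (up to duality conventions) built from the difference body of $F$'s codisks. Every closed curve $\gamma$ has $\bar F$-length equal to the average of the $F$-lengths of $\gamma$ and its reverse, and both of those are at least $\sys(F)$ for non-contractible $\gamma$. Hence $\sys(\bar F)\ge \sys(F)$. Applying Ivanov's Pu inequality to $\bar F$ gives $\sys(F)^2\le \sys(\bar F)^2\le \tfrac{\pi}{2}\volHT(\bar F)$. It then remains to compare $\volHT(\bar F)$ with $\volHT(F)$ fiber by fiber. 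In each cotangent fiber the dual unit ball of $\bar F$ is $\tfrac12(K-K)$, where $K$ is the codisk of $F$. The Rogers–Shephard inequality in the plane gives $|K-K|\le 6|K|$, i.e.\ $|\tfrac12(K-K)|\le \tfrac32|K|$, with equality only for triangles. Integrating over $\RP^2$ yields $\volHT(\bar F)\le\tfrac32\volHT(F)$, and therefore a constant $C\le (3/2)^{1/2}$.

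The strict inequality $C<(3/2)^{1/2}$ comes from the squared property: the codisks of $F$ are squared convex bodies, and triangles are not squared. I would prove a stable Rogers–Shephard inequality of the form
\[
|K-K|\le (6-\delta(d))|K|,
\]
where $d$ is the Banach–Mazur distance from $K$ to a triangle. I would then show that squared bodies are uniformly far from triangles: the set of squared bodies modulo affine maps is compact, because it is the image of the compact set of centrally symmetric planar bodies modulo linear maps, and it contains no triangle. This gives a uniform $\delta>0$ and hence $C\le (\tfrac{6-\delta}{4})^{1/2}<(3/2)^{1/2}$.

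I expect the main obstacles to be the two facts used in this last step. The first is that a triangle is not of the form $\sigma(U)$ up to affine maps. I would check this by noting that the boundary of $\sigma(U)$ near a corner comes from squaring, which doubles angles, so a corner of $\sigma(U)$ of angle $<\pi$ would force a corner of $U$ of angle $<\pi/2$. Central symmetry of $U$ then gives the opposite corner too, and the resulting constraints are incompatible with three corners. The second is the stability version of Rogers–Shephard, which I would take from the convex geometry literature or prove by a compactness argument. For the non-smooth case, approximate $X$ by smooth $I$-invariant bodies and use continuity of $\sys$ and $\on{vol}$.
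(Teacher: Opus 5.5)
Up to the non-strict bound $C\le(3/2)^{1/2}$, your argument is exactly the paper's: reversibilize, apply Ivanov--Pu, and compare codisks fiberwise by Rogers--Shephard (Proposition \ref{prop:non_reversible_Pu_sota}). The gap is in the step that makes the inequality strict uniformly in $X$.

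\textbf{The compactness claim is false.} You assert that squared bodies modulo affine maps form a compact set, being the image of centrally symmetric bodies modulo linear maps. But $\sigma$ only intertwines complex scalings, $\sigma(\lambda U)=\lambda^2\sigma(U)$ for $\lambda\in\C^\times$, not general real-linear maps. So it does not descend to the Banach--Mazur compactum. The relevant parameter space is centrally symmetric bodies modulo $\C^\times$, which is not compact, and its image is not closed. For instance, take $U_\epsilon=[-1,1]\times[-\epsilon,\epsilon]$, write $y=\epsilon t$, and apply the linear map $(u,v)\mapsto(u,v/2\epsilon)$ to $\sigma(U_\epsilon)$. This gives the points $(x^2-\epsilon^2t^2,\,xt)$ with $|x|,|t|\le 1$, which converge as $\epsilon\to0$ to the parabolic segment $\{(a,b): b^2\le a\le 1\}$. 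That limit has a flat edge and two corners, so it is not squared; squared bodies are strictly convex by Lemma \ref{lem:square_stays_convex}.

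Hence knowing that the set of squared bodies contains no triangle gives no uniform Banach--Mazur separation from triangles. You must also show that no degenerate limit is a triangle. This is true, but it has to be argued. Normalize the John ellipse of $U$ to have axes $1,\epsilon$ and write $U=\{(x,\epsilon t):(x,t)\in U_0\}$. The limits are then $\{(x^2,xt):(x,t)\in U_0\}$, whose boundary near the origin consists of two curved arcs $b\approx c_\pm\sqrt a$ tangent to the line $a=0$, so none of them is a triangle.

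\textbf{The argument that a triangle is not squared is also wrong.} $\sigma$ is conformal away from $0$ and $\partial U$ avoids $0$, so corners keep their angles; they do not double. The correct reason is simply strict convexity of squared bodies. However, that is exactly the property lost in the degenerating limit above, which is why the limit analysis is needed.

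\textbf{How the paper avoids this.} The paper never needs fiberwise uniformity. In Theorem \ref{thm:I_invariant_systolic_convex} it takes a volume-one maximizing sequence of $I$-invariant bodies. It uses the Gutt--Hutchings--Ramos inscribed ball (Theorem \ref{thm:I_invarian_worse}) to prevent degeneration and extracts a limit by Blaschke selection. The limit body's Lipschitz metric has genuinely squared, hence strictly convex, codisks, so the Rogers--Shephard step is strict there.

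The same a priori bound repairs your route. You only need bodies with normalized systolic ratio at least $1$. For those, normalized to $\on{vol}(X)=\on{vol}(B)$, the inscribed ball and the volume bound give $B_r\subset X\subset B_R$ with universal $r,R$. Every slice $X\cap L$ then has eccentricity at most $R/r$, so the corresponding $U$ range over a Hausdorff-compact family and your compactness argument becomes valid. Once repaired, your approach has two advantages: it yields a universal fiberwise constant $\delta$, and it passes to non-smooth bodies by plain approximation, with no need to analyze the Lipschitz metric of a limiting body.
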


\begin{remark*} The best previous bound for $I$-invariant convex bodies, given by Gutt-Hutchings-Ramos in \cite{gutt2022examples} following an argument of Ostrover, gives the following constant.
\[
4/\pi \simeq 1.2732
\]
We will review the proof in Section \ref{sec:applications}, since part of that result will be useful for Theorem \ref{thm:intro_I_invariant_systolic} . \end{remark*}
\subsection{Applications To Hopf Orbits And Orbit Links} A natural family of questions arising in the dynamics of flows on 3-manifolds deals with the types of knots and links formed by periodic orbits. This has been studied extensively in the setting of Reeb flows, and there are interesting connections to other properties such as entropy \cite{AlvesPirnapasov2022ForceEntropy, Meiwes2025LinkComplements, AlvesDahindenMeiwesPirnapasov2026Stability}. In general, there are no constraints on knot types of a Reeb flow even on the standard contact 3-sphere \cite{EtnyreGhrist2000KnottedOrbits}. However, many constraints are known for convex and dynamically convex Reeb flows \cite{HryniewiczSalomaoSiefring2025KnotTypes, HainzHamenstaedt2011TopologicalProperties, CiobaWendl2020Unknotted, HoferWysockiZehnder1996Unknotted, HoferWysockiZehnder1998Dynamics, HoferWysockiZehnder2003Foliations, Hryniewicz2012FastPlanes, HryniewiczSalomao2011DiskLikeSections, Hryniewicz2014Systems, abbondandolo2024closed, HryniewiczHutchingsRamos2026HopfOrbits, Edtmair2024DiskLikeSurfaces}.

\vspace{3pt}

Our construction can be used to give highly simplified proofs of the existence of Hopf orbits and links in the setting of $Q$-symmetric convex bodies. Recall the following definition.

\begin{definition*}[Hopf] A transverse link $L$ in the standard contact $S^3$ is \emph{Hopf} if it is transversely isotopic to a union of fibers of the Hopf fibration. 
\end{definition*}

\noindent Note that any two Hopf links with the same number of components are transversely isotopic, and a Hopf knot is simply the standard unknot of self-linking number $-1$ by work of Eliashberg \cite{eliashberg1993legendrian}. Hopf orbits play a prominent role in Reeb dynamics and quantitative symplectic geometry in dimension three. For instance, we have the following beautiful result of Hryniewicz.

\begin{thm*} \label{thm:hryniewicz} \cite{hryniewicz2013global,Hryniewicz2012FastPlanes} A closed Reeb orbit $\Gamma$ of a dynamically convex contact form on the standard contact $3$-sphere $S^3$ bounds a disk-like Birkhoff section if and only if $\Gamma$ is Hopf.
\end{thm*}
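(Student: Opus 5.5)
The plan is to prove the two implications separately; they are of very different difficulty.

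\textbf{Birkhoff section $\Rightarrow$ Hopf.} Suppose $\Gamma$ bounds a disk-like Birkhoff section $D$. First, the interior of $D$ is an embedded disk transverse to the Reeb field $R$, so $\Gamma$ is an unknot. Second, I would compute the self-linking number of $\Gamma$ with respect to the framing induced by $D$:
\begin{itemize}
\item pull back $\xi$ to $D$ and trivialize it there, which is possible since $D$ is a disk;
\item compare this trivialization with the one induced by $R$ along $\mathrm{int}(D)$;
\item since $R$ is transverse to $\mathrm{int}(D)$, the characteristic foliation of $D$ is nonsingular in the interior;
\item the standard relative Euler class count then gives $\mathrm{sl}(\Gamma)=-\chi(D)=-1$.
\end{itemize}
By Eliashberg's classification of transverse unknots in the tight $S^3$ \cite{eliashberg1993legendrian}, a transverse unknot with $\mathrm{sl}=-1$ is transversely isotopic to a Hopf fiber. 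This direction uses neither the Reeb dynamics nor dynamical convexity beyond transversality.

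\textbf{Hopf $\Rightarrow$ Birkhoff section.} This is the hard direction, and I would follow the Hofer--Wysocki--Zehnder holomorphic curve strategy.

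\emph{Step 1: reformulate.} It suffices to produce an embedded \emph{fast} finite energy plane $\tilde u=(a,u)$ in the symplectization $\R\times S^3$, with respect to a generic compatible $\tilde J$, asymptotic to $\Gamma$. Here ``fast'' means the winding of the asymptotic eigenvector is $1$, which under dynamical convexity forces $\mu_{CZ}(\Gamma)=3$. The HWZ theory then does the rest:
\begin{itemize}
\item the automatic transversality/Fredholm index $2$ gives an $\R\times S^1$-family of such planes;
\item $\R$-invariance and the positivity of intersections (Siefring) show that the projections $u$ foliate $S^3\setminus\Gamma$;
\item by \cite{HoferWysockiZehnder1998Dynamics}, these projections are the pages of an open book, each page a disk-like global surface of section.
\end{itemize}

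\emph{Step 2: produce the plane by continuation.} Since $\Gamma$ is Hopf, I would choose a contact isotopy taking $\Gamma$ to a Hopf fiber. I would then homotope the contact form, keeping $\Gamma$ a closed Reeb orbit, to an irrational ellipsoid form for which $\Gamma$ is the short fiber; there the fast planes are explicit. Along this homotopy I would continue the moduli space of fast planes asymptotic to $\Gamma$ and argue that it stays nonempty at the endpoint $\lambda$.

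\emph{Step 3 (main obstacle): compactness at the endpoint.} For the final form $\lambda$ one needs SFT compactness for the limit of planes, and the issue is excluding breaking. A limit building would have to contain planes or cylinders asymptotic to orbits other than $\Gamma$. I would try to rule this out as follows:
\begin{itemize}
\item dynamical convexity ($\mu_{CZ}\ge 3$ for all orbits) combined with the index additivity, since fast planes have index $2$ and lower levels must have nonnegative index, forbids all but trivial breakings;
\item the linking and self-linking condition $\mathrm{sl}=-1$, through Siefring's intersection theory, excludes the orbits interior to the building from linking $\Gamma$ in the required way;
\item one must also control degenerate (non-nondegenerate) cases by perturbation and a limiting argument, using that fast planes persist under $C^\infty$-small changes.
\end{itemize}

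Executing this compactness step cleanly is where essentially all the difficulty lies, and it is exactly the content of \cite{Hryniewicz2012FastPlanes,hryniewicz2013global}.
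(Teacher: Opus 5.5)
\textbf{The statement is cited, not proved.} The paper gives no proof of this theorem. It is quoted from Hryniewicz \cite{Hryniewicz2012FastPlanes,hryniewicz2013global}, together with Eliashberg's identification of Hopf knots with transverse unknots of self-linking number $-1$. Your top-level architecture matches that literature: a self-linking computation plus Eliashberg for one direction, and HWZ fast-plane theory for the other. Several of the steps you actually spell out are wrong, however.

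\textbf{Easy direction.} Transversality of $R$ to $\mathrm{int}(D)$ does \emph{not} make the characteristic foliation $TD\cap\xi$ nonsingular. That is in fact impossible. Along a transverse boundary the characteristic foliation is transverse to $\partial D$, so by Poincar\'e--Hopf its singularities have total index $\chi(D)=1$. For example, the page $\{\arg z_2=\theta\}$ of the Hopf open book has an elliptic point at its centre.

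What transversality actually gives is that every singular point is \emph{positive}. Orient $D$ so that $d\lambda|_D>0$ on the interior; by Stokes this orients $\partial D$ along $R$. At a singularity $T_pD=\xi_p$, and both orientations are the ones given by $d\lambda$. So after a generic perturbation of $D$ in its interior (which preserves transversality to $R$) you have $e_-=h_-=0$. Bennequin's formula plus Poincar\'e--Hopf then give $\mathrm{sl}=-(e_+-h_+)+(e_--h_-)=-\chi(D)=-1$.

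Equivalently, $v\mapsto v-\lambda(v)R$ is an orientation-preserving isomorphism $TD\to\xi$ over $\mathrm{int}(D)$. It sends the outward normal along $\partial D$ to the Seifert direction $\xi\cap TD$. Pushing forward a radial vector field therefore gives a section of $\xi|_D$ with a single zero of index $+1$.

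\textbf{Hard direction: fast planes and the index.} Being fast ($\mathrm{wind}_\infty=1$) does not force $\mu_{CZ}(\Gamma)=3$; it only forces $\mathrm{wind}_\pi=0$. Take the long orbit of an irrational ellipsoid $E(a,b)$ with $a<b$. It has $\mu_{CZ}=2\lfloor b/a\rfloor+3\ge 5$, it is Hopf, and it bounds the disk-like global sections $\{\arg z_1=\theta\}$. These approach it along an eigenvector of winding $1$, strictly below the extremal winding $\lfloor b/a\rfloor+1$; this is exactly why such planes are called fast. For such $\Gamma$ the unweighted Fredholm index is $\mu_{CZ}(\Gamma)-1\ge 4$. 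Your ``index $2$, hence an $\mathbb{R}\times S^1$-family'' step therefore fails unless you work with exponential weights that isolate the fast planes.

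\textbf{Hard direction: compactness along the homotopy.} More seriously, your continuation scheme targets the wrong compactness problem. To carry fast planes along a path $\lambda_t$ from an ellipsoid to $\lambda$, you need compactness for \emph{every} $t$, because the family can break and die before $t=1$. Dynamical convexity is assumed only at $t=1$. Intermediate forms may have short orbits of index $\le 2$, unlinked with $\Gamma$, on which planes asymptotic to $\Gamma$ break. Checking compactness only at the endpoint does not address this.

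Likewise, for degenerate $\lambda$ the difficulty is passing to the limit along nondegenerate approximations, with $\Gamma$ itself possibly degenerate. It is not the persistence of fast planes under small perturbations.

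So, beyond the citation (which is all the paper relies on), your sketch of the hard direction does not yet hold together.
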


\noindent It was recently shown by Abbondandolo-Edtmair-Kang \cite{abbondandolo2024closed}  that any minimizer on the boundary of a smooth, uniformly convex domain in dimension four is Hopf. 

\vspace{3pt}

Our next result is a strengthening of the main result of \cite{abbondandolo2024closed} in the $Q$-symmetric setting.

\begin{thm*}[$Q$-Symmetric Hopf Link] \label{thm:intro_hopf_link}
    Let $X$ be a $Q$-symmetric smooth, uniformly convex domain in $\HH$ and let $L \subset \partial X$ be any link consisting of minimal period closed Reeb orbits. Then $L$ is Hopf.
\end{thm*}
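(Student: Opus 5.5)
We pass through the quaternion covering $\pi:\HH^\times \to T^*S^2 - S^2$ and reduce the statement to a fact about shortest geodesics of a reversible Finsler metric on $\mathbb{RP}^2$. Since $X$ is $Q$-symmetric, it is centrally symmetric and convex, hence convex along $J$-complex lines. The construction therefore gives a Finsler metric $F$ on $S^2$. Because $X$ is invariant under $I$ and $J$, the metric $F$ is reversible and descends to a reversible metric $\bar F$ on $\mathbb{RP}^2$. Smoothness and uniform convexity of $X$, together with the squared-norm property, make $F$ a genuine smooth Finsler metric.

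The first step is to identify Reeb dynamics. The map $\pi$ is strict exact symplectic and sends $\partial X$ to $S^*_FS^2$, so it intertwines the Reeb flow on $\partial X$ with the cogeodesic flow of $F$. On the fiber it is the map $z\mapsto z^2$, so $\partial X \to S^*_FS^2$ is a double cover, with deck transformation $-1 \in Q$. Composing with the quotient by $I$ gives a $4$-to-$1$ covering $\partial X \to S^*_{\bar F}\mathbb{RP}^2 \cong L(4,1)$, namely $S^3 \to S^3/Q' $ where $Q'=\{\pm1,\pm I\}$.

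The second step is to locate the minimal orbits. A closed Reeb orbit upstairs projects to a closed geodesic of $\bar F$, and its action is a multiple of the length, by a factor fixed by the covering degree. The shortest closed curves on $\mathbb{RP}^2$ are the noncontractible ones. I would show that minimal-period Reeb orbits are exactly the full lifts of shortest noncontractible closed geodesics $\gamma$ of $\bar F$. Such a $\gamma$ is a simple closed curve; this is a standard curve-shortening/cut-and-paste argument for minimizers in $\pi_1$, and reversibility makes it clean. The task is to check that any other closed orbit, including lifts of contractible geodesics on $\mathbb{RP}^2$ and lifts of iterates, has strictly larger action. The lift of a simple noncontractible loop through the unit cotangent bundle is a loop in $L(4,1)$, and its preimage in $S^3$ is one closed orbit or several.

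The third step is to show that the resulting links are Hopf. Distinct shortest noncontractible geodesics on $\mathbb{RP}^2$ intersect exactly once. Two such curves cannot meet twice, or one could exchange arcs and get a shorter or non-smooth minimizer; and two noncontractible loops meet an odd number of times. So the minimal geodesics form a family of simple closed curves, each pair meeting transversally in one point. Such a configuration is isotopic, through configurations with the same property, to a family of great circles (projective lines) in $\mathbb{RP}^2$. The isotopy can be taken through immersed curves with no tangencies, so the unit conormal lifts vary by a transverse isotopy in the contact manifold $S^*\mathbb{RP}^2$. Pulling back under the covering gives a transverse isotopy upstairs. For the round metric, whose body is the ball, the lifts of great circles are Hopf fibers. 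Hence $L$ is transversely isotopic to a union of Hopf fibers.

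I expect the main obstacle to be this last step, the isotopy. The difficulty is keeping the lifted curves pairwise disjoint and transverse along the deformation, which requires their tangent directions never to coincide at intersection points. In $T^*$ this should be handled by the once-intersection property. I would try a Grayson/curve-shortening style argument, or an explicit straightening of pseudoline arrangements, since the curves form an arrangement of pseudolines each pair crossing once. The tool there is Ringel-type isotopy of pseudoline arrangements to line arrangements.
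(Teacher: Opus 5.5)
Your architecture matches the paper's: reduce to systolic geodesics of $\bar F$ on $\mathbb{RP}^2$, use embeddedness and the single-intersection property, isotope without tangencies to great circles, and lift. However, there is a genuine gap in your second step, and that step, not the isotopy, is the real obstacle.

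You claim that ``the shortest closed curves on $\mathbb{RP}^2$ are the noncontractible ones'' and that an orbit's action is its length times ``a factor fixed by the covering degree.'' Neither is right. The factor depends on the class of the cotangent lift in $\pi_1(S^*_{\bar F}\mathbb{RP}^2)\cong\mathbb{Z}/4$:
\begin{itemize}
\item a noncontractible simple geodesic of length $\ell$ lifts to a single orbit of period $4\ell$;
\item an embedded contractible geodesic (class $2$) lifts to orbits of period $2\ell$;
\item a contractible figure-eight (rotation number $0$, class $0$) lifts to four orbits of period $\ell$.
\end{itemize}
Reversible Finsler metrics on $\mathbb{RP}^2$ can have contractible closed geodesics far shorter than the systole (for instance around a thin neck). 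To identify the minimal orbits you would therefore need bounds such as $\ell\ge 2\,\mathrm{sys}(\mathbb{RP}^2,\bar F)$ for every contractible closed geodesic, and no metric argument supplies these.

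What supplies them is the convexity of $X$. The paper uses the Akopyan--Karasev theorem: on a smooth uniformly convex body invariant under a primitive $m$-th root of unity $u$, every minimal-action closed characteristic is $u$-invariant. Take $u=I$, a primitive $4$th root of unity for $\HH=\mathbb{C}^2$. Then minimal orbits are $I$-invariant. The $I$-invariant orbits are exactly those projecting to a generator of $H_1(S^*_{\bar F}\mathbb{RP}^2;\mathbb{Z})=\mathbb{Z}/4$, i.e.\ lifts of noncontractible geodesics, with period four times the length. Minimality then forces the geodesic to be systolic. Without this Clarke-duality input, the components of $L$ need not come from systoles, and your embeddedness and intersection arguments have nothing to act on. The paper explicitly remarks that this step is open for $Q$-symmetric domains that are only convex along $J$-complex lines, so it is not a formality.

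The isotopy you flagged is handled in the paper by citing a known result: embedded loops in $\mathbb{RP}^2$ in the class of $\mathbb{RP}^1$ that meet pairwise once are isotopic without tangencies to great circles. Your Ringel-type route is a plausible substitute, since triangle flips pass through triple points, which are allowed as long as crossings stay transverse. A few smaller points:
\begin{itemize}
\item Lift each unoriented geodesic with both orientations, since two components of $L$ can be orientation reversals of each other.
\item The Reeb orbits are cotangent (Legendre-transformed tangent) lifts, which are transverse; unit conormal lifts of curves in a surface are Legendrian.
\item You should state how you pass from $\bar F$ to the round metric, e.g.\ by interpolating metrics, which keeps cotangent lifts transverse.
\end{itemize}
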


Our proof is completely independent of the proof in \cite{abbondandolo2024closed} and much simpler, avoiding any use of Floer homology or Morse theory with the Clarke dual functional. The main idea is to prove the desired result on the Finsler metric side, where the link type of a collection of lifted geodesics in the unit sphere bundle of $\mathbb{RP}^2$ can be described using the collection of geodesics as a set of immersed curves in $\mathbb{RP}^2$ itself. This point of view is used, for instance, in the work of Dehornoy-Rechtman \cite{dehornoy2025left}. One may show that any minimizing geodesic in $\mathbb{RP}^2$ is embedded and that any two such geodesics intersect once. One may then show that the cotangent lift of a set of geodesics satisfying these properties must necessarily be isotopic to a Hopf link.

\begin{remark} We will also provide a simplified proof that every (individual) minimal periodic Reeb orbit is Hopf in a uniformly convex centrally symmetric domain $X$. This proof uses Clarke duality and the correspondence with geodesics. It is also independent of the proof in \cite{abbondandolo2024closed}.
\end{remark}

\noindent  Theorems \ref{thm:hryniewicz} and \ref{thm:intro_hopf_link} imply the following corollary, which is the main result of \cite{abbondandolo2024closed} in the $Q$-symmetric case.

\begin{cor*}[$Q$-Symmetric Birkhoff Section] \label{thm:intro_section} Let $X \subset \HH$ be a $Q$-symmetric, smooth, uniformly convex domain. Then every minimal period Reeb orbit $K \subset \partial X$ bounds a disk-like Birkhoff section $D$ such that every other minimal Reeb orbit corresponds to a fixed point of the associated return map $\Phi:D \to D$.
\end{cor*}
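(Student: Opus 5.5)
The corollary should follow by combining Theorem \ref{thm:intro_hopf_link} with Hryniewicz's theorem (Theorem \ref{thm:hryniewicz}). Each earlier result supplies one half: the first gives the Hopf property, the second gives the disk-like section.

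\emph{Setting up the contact structure.} First I will place $\partial X$ in the setting of Theorem \ref{thm:hryniewicz}. Since $X$ is smooth and uniformly convex, and (after translating) contains the origin in its interior, radial projection identifies $\partial X$ with $S^3$. Under this identification the restriction of the standard Liouville form is a contact form defining the standard tight contact structure. A smooth uniformly convex boundary is dynamically convex by the classical result of Hofer-Wysocki-Zehnder: every periodic orbit has Conley-Zehnder index at least $3$. The $Q$-symmetry guarantees central symmetry, since $-1 \in Q$, so the origin is a natural center.

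\emph{The Birkhoff section.} Let $K$ be a minimal period Reeb orbit. Applying Theorem \ref{thm:intro_hopf_link} to the one-component link $L = K$ shows that $K$ is Hopf. Theorem \ref{thm:hryniewicz} then produces a disk-like Birkhoff section $D$ with $\partial D = K$.

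\emph{Other minimal orbits are fixed points.} Now let $K'$ be another minimal Reeb orbit. The two-component link $K \cup K'$ is also a link of minimal orbits, so by Theorem \ref{thm:intro_hopf_link} it is transversely isotopic to a pair of Hopf fibers. In particular $\on{lk}(K,K') = 1$. Since $K'$ is a periodic orbit disjoint from $\partial D = K$, it meets the interior of $D$ transversely, and every intersection is positive because the flow is positively transverse to the interior of a Birkhoff section. Hence the number of intersection points equals the algebraic intersection number $K' \cdot D = \on{lk}(K',K) = 1$. So $K'$ hits $D$ in exactly one point $p$. Following the flow from $p$, the first return to $D$ happens when $K'$ closes up, so $\Phi(p) = p$.

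\emph{Main obstacle.} The substantive work lies entirely in the cited theorems. Here the only delicate points are the sign and orientation conventions: positivity of intersections of $K'$ with the interior of $D$, and the identification of the intersection number with the linking number $+1$ for Hopf fibers. If needed, I would check these in the model case of the round ball, where the Hopf fibers are the orbits and the sections are the holomorphic disks.
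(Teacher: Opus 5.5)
Your proposal is correct and follows the paper's route. The paper obtains the corollary directly by combining Hryniewicz's theorem with the $Q$-symmetric Hopf link theorem, and your linking-number argument makes explicit the fixed-point step that the paper leaves implicit: two minimal orbits form a Hopf link, so $\mathrm{lk}(K,K')=1$, and intersections of $K'$ with the interior of $D$ all have the same sign, which forces exactly one intersection point. The sign conventions you flag are harmless. The Reeb field is transverse to the connected interior of $D$, so the intersection sign is constant, and the geometric count equals $|\mathrm{lk}(K,K')| = 1$ whichever orientation conventions you use.
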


\noindent We conclude with a discussion of questions and conjectures arising from this work. 
\subsection{Questions On Low Regularity Optimizers}\label{subsec:questions_low_regularity}   A first interesting problem is to extend Theorem \ref{thm:intro_Q_optimizers} to arbitrary $Q$-invariant convex bodies that are not necessarily smooth. Evidently a non-smooth convex body cannot be smoothly symplectomorphic to the closed ball. Thus we propose the following modification of Theorem \ref{thm:intro_Q_optimizers}. 

\begin{conjecture*}[General Optimizers] \label{conj:general_optimizers} A $Q$-invariant, possibly non-smooth, convex domain $X$ in $\mathbb{H}$ achieves equality in the Viterbo inequality if and only if the interior of $X$ is symplectomorphic to a ball.
\end{conjecture*}

\noindent Our covering construction provides a Finsler geometry perspective on Conjecture \ref{conj:general_optimizers}. Specifically, we say that a Finsler metric $F$ is \emph{systolically optimal} if the systole and volume agree with those of a Zoll metric on $\mathbb{RP}^2$. The following conjecture on systolically optimal metrics implies Conjecture \ref{conj:general_optimizers}. 


\begin{conjecture*}[Metric Approximation] Let $F$ be a reversible Lipschitz Finsler metric on $\mathbb{RP}^2$ that is systolically optimal. Then $F$ is the $C^0$-limit of a sequence of smooth Zoll reversible Finsler metrics on $\mathbb{RP}^2$.
\end{conjecture*}

\noindent It seems potentially relevant that the space of smooth reversible Zoll metrics on the projective plane is contractible by work of Sabourau \cite{sabourau2016strong}. 

\vspace{3pt}

Next, there are also generalized notions of Zollness in the low regularity setting of general convex bodies, introduced recently by Matijevi\'{c} \cite{matijevic2024positive,matijevic2025systolic} and Haim-Kislev \cite{haim2025dynamical}, which are formulated using the theory of generalized Reeb trajectories (cf. \cite{artstein2014bounds,artstein2014symplectic,chaidez2021computing,haim2019symplectic}). Convex bodies whose interiors are symplectomorphic to balls are generalized Zoll by \cite[Cor 1.8]{matijevic2025systolic}. Therefore the following conjecture is a weakening of Conjecture \ref{conj:general_optimizers}. 

\begin{conjecture*}[Zoll Optimizers] \label{qu:zoll_optimizers} Let $X$ be a $Q$-invariant convex domain in $\mathbb{H}$ that achieves equality in the Viterbo inequality. Then $X$ is generalized Zoll in the sense of Matijevi\'{c} \cite{matijevic2025systolic}.
\end{conjecture*}

\noindent Generalized Zollness is a property of the homotopy type of the space of generalized systoles on the boundary of a convex body, and through the constructions of this paper, one can instead study the space of shortest oriented non-contractible $C^1$-loops
\[
\on{Sys}(F) \qquad\text{for a reversible Finsler metric $F$ on $\mathbb{RP}^2$.}
\]
A convex body in $\R^4$ whose space of systoles is homotopy equivalent to $S^2$ is generalized Zoll \cite{matijevic2025systolic}. Therefore the following conjecture seems natural and should have applications to Conjecture \ref{qu:zoll_optimizers}.

\begin{conjecture*}[Systolic Homotopy Type] \label{qu:zoll_homotopy_type} Let $F$ be a reversible Lipschitz Finsler metric on $\mathbb{RP}^2$ that is systolically optimal. Then the space $\on{Sys}(F)$ is homotopy equivalent to $S^2$. 
\end{conjecture*}

\begin{remark*} There are many further weakenings of Conjectures \ref{qu:zoll_optimizers} and \ref{conj:general_optimizers} that one could also consider. In particular, we expect that the methods of this paper could be extended to show that on a $Q$-invariant convex body attaining equality in the Viterbo inequality, every point must have a closed generalized Reeb trajectory of minimal period through it. 
\end{remark*}

\subsection{Questions On Centrally Symmetric Bodies} We finally consider convex bodies that are only centrally symmetric. This is, in some sense, the largest class of convex bodies that could satisfy the Viterbo inequality, and the Viterbo conjecture for centrally symmetric bodies is strictly stronger than the Mahler conjecture. This case is thus singled out by Haim-Kislev-Ostrover.

\begin{question*} \cite[Quest 1]{haim2024counterexample} \label{qu:symmetric_viterbo} Do centrally symmetric convex bodies satisfy the Viterbo inequality?
\end{question*}

A centrally symmetric domain corresponds to a (possibly) non-reversible Finsler metric on the sphere under our construction. A famous result of Croke \cite{croke1988area} states that the length of the shortest closed geodesic for any Riemannian metric on the sphere is bounded above by a constant multiple of the square root of the area. This can be extended to Finsler metrics, yielding the following result.

\begin{thm*}[Viterbo For Fiberwise Convex] \label{thm:intro_sys_ineq_fiberwise_convex} Let $X$ be a smooth, centrally symmetric domain in $\HH$ that is convex along $J$-complex lines. Then there is a constant $C > 0$ such that
\[
\frac{\sys(X)}{\sys(B)} \le C \Big( \frac{\on{vol}(X)}{\on{vol}(B)}\Big)^{1/2}
\]
\end{thm*}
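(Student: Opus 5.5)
The plan is to transfer the statement to Finsler geometry on $S^2$ through the quaternion covering $\pi:\HH^\times \to T^*S^2 - S^2$, and then invoke a Finsler version of Croke's area–systole bound.

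\textbf{Step 1: Build the metric and compare volumes.} Since $X$ is smooth, centrally symmetric and convex along $J$-complex lines, the construction of Section \ref{sec:quaternion_covering} produces a Finsler metric $F$ on $S^2$ with $\pi(\partial X) = S^*_F S^2$. Central symmetry means $X$ is invariant under $-1 \in Q$, and $\pi$ is a $2:1$ cover whose deck group is $\{\pm 1\}$. Hence $\pi$ restricts to a double cover $X - \{0\} \to D^*_F S^2 - S^2$. Because $\pi$ is symplectic, this gives
\[
\on{vol}(X) = 2\,\on{vol}(D^*_FS^2) = 2\,\volHT(F),
\]
up to the normalizing constant built into the Holmes–Thompson volume.

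\textbf{Step 2: Compare systoles.} The map $\pi$ is strictly exact, so it intertwines the Liouville forms. It therefore sends Reeb orbits on $\partial X$ to Reeb orbits of $S^*_FS^2$, which are cotangent lifts of closed $F$-geodesics, and it preserves action. A closed geodesic $\gamma$ of $F$-length $\ell$ has a lift to $\partial X$ that closes up either after one turn or after a double turn. In either case its action is at most $2\ell$. This gives
\[
\sys(X) \le 2\,\ell_{\min}(F),
\]
where $\ell_{\min}(F)$ is the length of the shortest nontrivial closed geodesic. Only this inequality direction is needed.

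\textbf{Step 3: Apply a Finsler Croke bound.} Use a Croke-type inequality for possibly non-reversible Finsler metrics on $S^2$:
\[
\ell_{\min}(F) \le C' \volHT(F)^{1/2}.
\]
Combining this with Steps 1 and 2 yields $\sys(X) \le C''\on{vol}(X)^{1/2}$, which is the statement after normalizing by $B$.

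\textbf{Main obstacle.} The hard part is Step 3. Croke's argument is Riemannian, and the metric here may be non-reversible. I would try to reduce to the Riemannian case by comparing $F$ with a Riemannian metric $g$. Two inputs make this work:
\begin{enumerate}
\item A John-ellipsoid argument applied fiberwise, which gives $c^{-1} g \le F \le c\,g$ with a universal $c$. For non-symmetric unit balls this needs John's theorem for non-centrally-symmetric bodies. The squared property of the unit balls may help here, or one can first symmetrize.
\item The fact that the Holmes–Thompson area of $F$ is comparable to the Riemannian area of $g$.
\end{enumerate}
There is a catch: Croke's bound controls the shortest closed geodesic of $g$, not of $F$. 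So instead of transferring closed geodesics, I would use Croke's actual mechanism, a short sweepout of $S^2$ by loops of $g$-length $\lesssim \on{area}(g)^{1/2}$. These loops have $F$-length at most $c$ times that bound. A Birkhoff min-max argument for $F$, valid for non-reversible Finsler metrics via curve shortening or broken geodesics, then produces a closed $F$-geodesic of length at most the sweepout width.

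Alternatively, one could cite an existing Finsler extension of Croke's theorem from the literature if one is available. That would give only some constant $C$, which is all the statement asks for.
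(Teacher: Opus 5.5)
Your Steps 1 and 2 are exactly the paper's argument. The double cover gives $\on{vol}(X)=2\volHT(S^2,F)$, and lifting a shortest closed geodesic gives $\sys(X)\le 2\,\on{sg}(S^2,F)$. For Step 3 the paper takes your fallback option. It cites Balacheff's extension of Croke's bound to possibly non-reversible Finsler metrics with Holmes--Thompson area (Theorem \ref{thm:shortest_geodesic_ineq}), which gives $C\le 8+\frac{11\sqrt{3}}{2}$. With that citation your proof is complete and coincides with the paper's. The citation is the essential input, not an optional shortcut, because your primary route for Step 3 fails.

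\textbf{The failing step.} You claim that Croke's method yields a sweepout of any Riemannian sphere by loops of length at most a constant times $\on{area}^{1/2}$. No such bound exists.
\begin{itemize}
\item Take a thin tripod: three capped tubes of length $\ell$ and circumference $w$, glued at a junction, with area $O(\ell w)$.
\item In $H_1$ of the complement of the three tips $p_A,p_B,p_C$, the meridians $m_A,m_B,m_C$ are pairwise independent.
\item A loop of length $<\ell/4$ that passes through a tip $p_X$ stays in a disk around $p_X$ containing no other tip. So just before and after the pass, its class is a multiple of $m_X$.
\item The class is constant between passes through tips. Between a pass through $p_X$ and the next pass through a different tip $p_Y$, it is a multiple of both $m_X$ and $m_Y$, hence $0$.
\item The family starts and ends at constant loops. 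So each maximal run of consecutive passes through one tip begins and ends with class $0$, and its signed count is zero. This contradicts the sweepout having degree one.
\end{itemize}
Hence every loop sweepout contains a loop of length at least $\ell/4\gg\sqrt{\ell w}$, even though the tripod has short closed geodesics (the meridians).

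\textbf{Why the comparison with $G$ cannot rescue this.} Croke's argument is not an unconditional sweepout bound. It is by contradiction, and it uses the assumed absence of short closed geodesics of the very metric under consideration. Short closed geodesics are not monotone under $F\le c\,G^{1/2}$. So that hypothesis for $F$ tells you nothing about $G$, and the argument cannot be run for $G$ and transported to $F$. It has to be carried out for $F$ itself, which is what Balacheff does.

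\textbf{A smaller, repairable point.} The universal two-sided comparison in your item 1 is false, because squared norms can be arbitrarily asymmetric. For an ellipse $K$ with semi-axes $a\gg b$, the support function of $\sigma(K)$ is $a^2$ in direction $1$ and $b^2$ in direction $-1$. You only need the one-sided bound $F\le 2\bar F\le 2\sqrt{2}\,G^{1/2}$, with $G$ the John metric of $\bar F$ (Theorem \ref{thm:johns_metric}). Combined with $\pi\,\on{area}(G)\le\volHT(\bar F)\le\frac{3}{2}\volHT(F)$ from Lemma \ref{lem:reversibilization_volume}, this suffices.
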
 

\noindent On the other hand, the analogue of the Viterbo conjecture is false in the setting of Finsler (or even Riemannian) metrics. Indeed, Croke \cite{croke1988area} himself pointed out an example of a singular metric on the sphere due to Calabi whose systolic ratio (or more precisely, diastolic ratio) exceeds that of the standard round sphere. It is natural to ask if such a metric counter-example could yield a counter-example to Question \ref{qu:symmetric_viterbo}.

 \begin{question*} \label{qu:viterbo_violator} Is there a Riemannian (or Finsler) metric on the sphere, coming from a centrally symmetric convex body in $\HH$, whose systolic ratio exceeds that of the round metric?
\end{question*}

There are very few explicit examples of Finsler metrics in the literature that have a longer shortest geodesic than the round metric of the same volume. Thus it is an interesting, but seemingly subtle, problem to construct an example as sought in Question \ref{qu:viterbo_violator}.

\begin{remark*}[Searching For Examples] We performed some investigation of Question \ref{qu:viterbo_violator} during the preparation of this paper. In particular, we computed the domain $X \subset \HH$ corresponding to the Calabi-Croke metric under our correspondence, and determined that its systolic ratio (i.e. its volume normalized systole) is actually smaller than that of the ball. Moreover, we found that this body is only convex along $J$-complex lines, but not convex. Thus it seems unlikely that an example for Question \ref{qu:viterbo_violator} will exist in a neighborhood near the Calabi-Croke metric.
\end{remark*}

\subsection{The Symmetric Capacity Conjecture} Finally, a longstanding stronger variant of the Viterbo conjecture stated that all normalized symplectic capacities agree on convex bodies in $\mathbb{R}^{2n}$. The counter-example of Haim-Kislev-Ostrover \cite{haim2024counterexample} also refuted this stronger conjecture. However, in light of Theorem \ref{thm:intro_Q_Viterbo}, it is natural to pose this conjecture again with a symmetry assumption.

\begin{question*}[Capacity Conjecture] Does the Ekeland-Hofer-Zehnder capacity coincide with the Gromov width for $Q$-symmetric convex bodies in $\HH$?
\end{question*}

\subsection*{Outline} In Sections \ref{sec:finsler_metrics} and \ref{sec:systolic_geometry}, we discuss various background results from Finsler geometry and systolic geometry. In Section \ref{sec:quaternion_covering}, we present the main covering construction described informally above. In Section \ref{sec:applications}, we present the proofs of the various applications.

\subsection*{AI Disclosure.} ChatGPT assisted in formalizing the construction in Section \ref{sec:quaternion_covering}. ChatGPT also proposed Lemma \ref{lem:chord_bound}. The main ideas of this paper and all other mathematical content are due solely to the authors. This article contains no AI generated text, but AI tools were used to proofread the article.

\subsection*{Acknowledgments} 
Some of this work was conducted during the joint stay of JC and ST at the Institute for Advanced Study for the Special Year in Conformally Symplectic Dynamics. The authors are grateful for their hospitality. JC and ST were partially supported by US-Israel BSF Award 2024157. JC was partially supported by NSF Award DMS-2446019. 
ST was partially supported by the Center for New Scientists at the Weizmann
Institute of Science and ISF Beresheet grant 4097/25.

\section{Finsler Geometry} \label{sec:finsler_metrics}

In this section, we review the basic concepts and results from Finsler geometry that will be needed in this paper. We refer the reader to the monograph of Bao-Chern-Shen \cite{bao2012introduction} for a more detailed exposition.

\subsection{Asymmetric Norms} We start by reviewing key concepts regarding asymmetric norms on vector spaces. In particular, we review the notion of strong convexity for (possibly non-smooth) asymmetric norms.

\begin{definition}[Norms] An \emph{asymmetric norm} $F$ on a vector space $V$ is a positively 1-homogeneous function that is positive away from the origin and that is convex in the sense that
\[
F(u + v) \le F(u) + F(v) \qquad\text{for all $u,v \in V$.}
\]
A \emph{(symmetric) norm} is an asymmetric norm that is fully 1-homogeneous, or equivalently
\[
F(v) = F(-v) \qquad\text{for all $v$ in $V$.}
\]\end{definition}

\begin{remark}[Regularity] An asymmetric norm will be called smooth, differentiable or Lipschitz if it has the respective regularity on the complement of the origin. We will primarily consider smooth norms in this paper.
\end{remark}

We will be interested in asymmetric norms satisfying the following strong convexity property.

\begin{definition}[Strong Convexity] \label{def:strong_convexity} An asymmetric norm $F$ on a vector space $V$ is \emph{strongly convex} if there is a positive constant $\epsilon$ such that
\begin{equation}\label{eq:strong_convexity_def}
    \epsilon F^2(u-v) \le F^2(u) + F^2(v) - \frac{1}{2}F^2(u+v)
\end{equation}
The \emph{strong convexity factor} $\epsilon(F)$ is the maximal $\epsilon$ such that this inequality holds for all $u$ and $v$. \end{definition}

 We will need several basic properties of strong convexity. First, note that we have the following closedness property.

 \begin{remark}[Closedness] \label{rmk:openess} Note that the set of asymmetric norms that are strongly convex with factor greater than or equal to $\epsilon$ is closed in the compact-open topology on norms.
 \end{remark}

\noindent Next, we note that strong convexity of a given norm implies strict convexity of the corresponding closed unit ball.

\begin{lemma}[Strict Convexity] Let $F$ be a strongly convex asymmetric norm on a vector space $V$. Then $F$ (or equivalently, the closed unit ball $D_F$) is strictly convex.
 \end{lemma}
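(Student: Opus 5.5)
The plan is a direct contradiction argument using the defining inequality \eqref{eq:strong_convexity_def}. Strict convexity of the closed unit ball $D_F$ means that for distinct $u,v$ with $F(u)=F(v)=1$, the midpoint satisfies $F(\tfrac{u+v}{2})<1$. Equivalently, the boundary of $D_F$ contains no nondegenerate segment. Since $F$ is convex and positively homogeneous, the segment $[u,v]$ lies in $D_F$. So if $D_F$ failed to be strictly convex, some segment would lie in the unit sphere, and in particular its midpoint would have $F$-value one.

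Suppose, then, that $u\neq v$ with $F(u)=F(v)=1$ and $F(\tfrac{u+v}{2})=1$. By positive homogeneity, $F(u+v)=2$, so $F^2(u+v)=4$. Plugging into \eqref{eq:strong_convexity_def} gives
\[
\epsilon F^2(u-v) \le 1 + 1 - \tfrac{1}{2}\cdot 4 = 0 .
\]
Because $\epsilon>0$, this forces $F(u-v)=0$. Since $F$ is positive away from the origin, $u=v$, which is a contradiction. Hence $F(\tfrac{u+v}{2})<1$ whenever $u\neq v$ are unit vectors.

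To match a more general formulation of strict convexity, namely $F(u+v)<F(u)+F(v)$ for non-positively-proportional $u,v$, one first rescales to unit vectors. Suppose $F(u+v)=F(u)+F(v)$ with $u,v\neq 0$. Convexity of the unit ball then makes the whole segment between $u/F(u)$ and $v/F(v)$ lie on the unit sphere. This is the standard fact that equality in the triangle inequality makes the normalized points lie on a common supporting face. Applying the midpoint argument above to that segment shows $u/F(u)=v/F(v)$, so $u$ and $v$ are positively proportional.

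I do not expect any real obstacle here. The only slightly delicate step is the reduction from the triangle-inequality form to the midpoint form, which uses only convexity of $D_F$.
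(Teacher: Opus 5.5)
Your proof is correct and is essentially the paper's argument. Both plug distinct unit vectors $u,v$ into the strong convexity inequality and use $F(u-v)>0$ to force $F(\tfrac{u+v}{2})<1$. The paper does this directly rather than by contradiction, and your extra paragraph reducing the triangle-inequality form of strict convexity to the midpoint form is valid but not needed for the statement.
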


 \begin{proof} Let $u$ and $v$ be distinct vectors in $D_F$ so that $F(u) = F(v) = 1$ and let $s \in (0,1)$. Then
 \[
 F(\frac{u+v}{2})^2 < 2F^2(\frac{u}{2}) + 2F^2(\frac{v}{2}) \le \frac{1}{2}(F(u)^2 + F(v)^2) = 1
 \]
 Thus the midpoint of any line segment between points on the boundary of $D_F$ is in the interior of $D_F$. In other words, $D_F$ is strictly convex.\end{proof}

 \noindent Next, we have the following useful equivalent characterizations in the case of smooth norms.
 
\begin{lemma}[Smooth Criteria] \label{lem:smooth_convex_criterion} For any smooth asymmetric norm $F$ on $\R^n$, the following are equivalent.
\begin{itemize}
    \item[(a)] The asymmetric norm $F$ is strongly convex in the sense of Definition \ref{def:strong_convexity}. 
    \vspace{2pt}
    \item[(b)] The Hessian of $F^k$ is positive definite at every non-zero point $x$ in $\R^n$ for some constant $k > 1$.
    \vspace{2pt}
    \item[(c)] The Hessian of $F^k$ is positive definite at every non-zero point $x$ in $\R^n$ for every constant $k > 1$.
    \vspace{2pt}
    \item[(d)] The closed unit disk $D_F \subset \R^n$ of $F$ is uniformly convex, or equivalently the unit sphere $\partial D_F \subset \R^n$ has positive definite second fundamental form.
\end{itemize}
\end{lemma}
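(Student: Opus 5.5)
The plan is to prove the cycle (a)$\Rightarrow$(c)$\Rightarrow$(b)$\Rightarrow$(d)$\Rightarrow$(a), and to reduce everything to one Hessian identity. For $G = F^k$ with $F$ positively $1$-homogeneous and smooth away from $0$, differentiating twice gives
\[
\nabla^2 (F^k) = kF^{k-1}\nabla^2 F + k(k-1)F^{k-2}\, dF \otimes dF .
\]
Since $\nabla^2 F(x)x = 0$ by homogeneity and $dF(x)(x) = F(x) > 0$, we split $\R^n = \R x \oplus \ker dF(x)$. On $\R x$ the Hessian of $F^k$ is $k(k-1)F^k > 0$ whenever $k>1$. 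On $\ker dF(x)$, which is the tangent space of the level set $\{F = F(x)\}$, it equals $kF^{k-1}\nabla^2 F$. Mixed terms vanish because $\nabla^2F(x)x=0$ and $dF$ vanishes on $\ker dF$. Hence, for every $k>1$, positive definiteness of $\nabla^2 F^k$ at $x$ is equivalent to positive definiteness of $\nabla^2 F(x)$ restricted to $\ker dF(x)$. This condition does not depend on $k$, which gives (b)$\Leftrightarrow$(c). For $x \in \partial D_F$, the restriction of $\nabla^2F$ to $T_x\partial D_F$ is $|\nabla F(x)|$ times the second fundamental form of $\partial D_F$ with respect to the outward normal, which gives (c)$\Leftrightarrow$(d). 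The value at other points $x$ follows by homogeneity.

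For (c)$\Rightarrow$(a), take $k=2$ and $H = F^2$, which is $C^1$ on $\R^n$, smooth off $0$, and $2$-homogeneous.
1. The Hessian $\nabla^2 H$ is $0$-homogeneous and positive definite on the compact unit sphere, so $\nabla^2 H \ge 2\lambda\,\mathrm{Id}$ for some $\lambda>0$.
2. This gives $\lambda$-strong convexity of $H$ in the Euclidean sense: $H$ is $C^1$, and the bound holds along every segment avoiding $0$, which extends to all segments by continuity.
3. Rewrite the right side of \eqref{eq:strong_convexity_def}. With $a = u$ and $b = v$, the $2$-homogeneity of $H$ gives
\[
H(u)+H(v)-\tfrac12 H(u+v) = H(u)+H(v) - 2H\!\left(\tfrac{u+v}{2}\right).
\]
4. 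The midpoint form of strong convexity gives $H(u)+H(v)-2H(\frac{u+v}{2}) \ge \frac{\lambda}{2}|u-v|^2$.
5. By equivalence of $F$ with the Euclidean norm, $|u-v|^2 \ge c F^2(u-v)$, which yields (a) with $\epsilon = \lambda c/2$.

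For (a)$\Rightarrow$(c) the argument runs in reverse. Put $u = x + tw$ and $v = x - tw$ in \eqref{eq:strong_convexity_def}. Then $u+v = 2x$ and $H(2x)/2 = 2H(x)$, so the inequality reads
\[
\epsilon\, 4t^2 H(w) \le H(x+tw) + H(x-tw) - 2H(x).
\]
Dividing by $t^2$ and letting $t \to 0$ for $x \ne 0$ gives $\langle \nabla^2 H(x) w, w\rangle \ge 4\epsilon H(w) > 0$ for $w \ne 0$. So $\nabla^2 F^2$ is positive definite, which is (b) with $k=2$, and (c) follows from the first paragraph.

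The main obstacle I expect is the regularity at the origin in step 2 of (c)$\Rightarrow$(a). $H$ is not $C^2$ at $0$, so the segment from $u$ to $v$ may pass through the origin. The way around it is to use only that $H$ is $C^1$ and that $t \mapsto H(\gamma(t)) - \lambda t^2|u-v|^2$ is convex on each piece, then glue the pieces using $C^1$-ness, or perturb $u,v$ slightly and pass to the limit. The rest is bookkeeping of the homogeneity identities.
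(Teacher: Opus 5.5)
Your proposal is correct and follows essentially the same route as the paper: the identity $D^2F^k = kF^{k-1}D^2F + k(k-1)F^{k-2}\,dF\otimes dF$ with the splitting $\R x \oplus \ker dF(x)$ handles (b)--(d), and the second-order expansion of the strong convexity inequality along $x \pm tw$ gives (a)$\Rightarrow$(b). The only difference is in (c)$\Rightarrow$(a). There you pass through Euclidean strong convexity of $F^2$, the midpoint inequality and norm equivalence, and you handle the origin by $C^1$ gluing. The paper instead integrates $h''$ against $\chi(s)=\min(s,1-s)$ and bounds the Hessian directly by $4\epsilon F^2(w)$, which also gives the sharp value of $\epsilon(F)$ used in Lemma~\ref{lem:smooth_factor}.
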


\begin{proof} We break the proof into several steps. The first two steps show that (b-d) are equivalent. The final step will show that (a) is equivalent to the other conditions.

\vspace{3pt}

\noindent {\bf Step 1.} We start by showing that (d) implies (c). Assume that $Y = \partial D_F$ is uniformly convex, or equivalently that the second fundamental form $S$ of $Y$ is positive definite at every point $x$ in $Y$. First, note that since $F$ is 1-homogeneous, it follows that
\begin{equation} \label{eq:smooth_convex_criterion:1} d_xF(x) = F(x) \qquad\text{and}\qquad D^2_xF(x,u) = 0 \end{equation}
for any non-zero $x$ in $\R^n$ and any vector $u$ in $\R^n$. Therefore $1$ is a regular value of $F$ and the second fundamental form at any point in $Y = F^{-1}(1)$
\begin{equation} \label{eq:smooth_convex_criterion:2}
S_x = \frac{1}{|DF_x|} \cdot D^2_x F|_{TY} \qquad\text{is positive definite for each non-zero $x$ in $Y$.}
\end{equation}
Second, note that the Hessian of $F^k$ is given by the following formula.
\begin{equation} \label{eq:smooth_convex_criterion:3}
D^2 F^k = k F^{k-1} D^2 F + k(k - 1) F^{k-2}dF \otimes dF
\end{equation}
Finally, note that $Y$ is transverse to the radial vector-field $Z$ given by $Z(x) = x$, and thus that $T_x\R^n$ splits as a direct sum $\R \oplus T_xY$ where $\R$ is spanned by the radial vector-field $Z$. It  follows from (\ref{eq:smooth_convex_criterion:1}-\ref{eq:smooth_convex_criterion:3}) that under this splitting, the Hessian $D^2_xF^k$ is identified with the direct sum of the rank one form $k(k-1)F^{k-2}$ and a positive scalar multiple of $S_x$. Therefore the Hessian $D^2_xF^k$ is positive definite at each $x$ in $Y$. Since every non-zero point can be scaled to lie on the unit sphere $Y$, it follows from 1-homogeneity that it is positive definite away from the origin.

\vspace{3pt}

\noindent {\bf Step 2.} Next we show that (b) implies (d). Since (c) clearly implies (b), this will imply that (b-d) are equivalent. Here we simply note that by (\ref{eq:smooth_convex_criterion:2}) and (\ref{eq:smooth_convex_criterion:3}), we know that
\[
S_x = \frac{1}{k |DF_x|} \cdot D^2_x F^k|_{TY} \qquad\text{for any $x$ in $Y$.}
\]
Therefore if $D^2_x F^k$ is positive definite, the second fundamental form $S_x$ is also positive definite.

\vspace{3pt}

\noindent {\bf Step 3.} Finally, we show that (a) is equivalent to (b-d). Specifically, we show that $F$ is strongly convex if and only if the Hessian of $F^2$ is positive definite. 

\vspace{3pt}

We first derive a useful integral expression for the inequality in Definition \ref{def:strong_convexity}. Fix vectors $u,v$, and consider the function $h:[0,1] \to \R$ given by 
$$h(s):= F(u(s))^2\quad \text{where} \quad u(s) = (1 - s)u + sv.$$
This function is smooth for all $s$ such that $u(s)\neq 0$. We have 
\[
F^2(u) + F^2(v) - \frac{1}{2}F^2(u+v) = h(0) + h(1) - 2h(1/2)
\]
On the other hand, we can use integration by parts to see that
\[
 h(0) + h(1) - 2h(1/2) 
= \int_{0}^{1/2} s h''(s) ds + \int_{1/2}^1 (1-s)h''(s) ds
\]
Here, if necessary, we remove the point $s$ such that $u(s)=0$ from the domain of integration.
The second derivative of $h(s)$ can be written using the Hessian of $F^2$ as follows.
\[
h''(s) = D^2(F^2_{u(s)})(v-u,v-u)
\]
Therefore, we have the following integral expression where $\chi(s) = \on{min}(s,1-s)$.
\begin{equation} \label{eq:smooth_convex_criterion:4}
F^2(u) + F^2(v) - \frac{1}{2}F^2(u+v) = \int_0^1 \chi(s) \cdot D^2(F^2_{u(s)})(v-u,v-u) \cdot ds
\end{equation} 

Now we prove the claimed equivalence. First, if $D^2 (F^2)$ is positive definite at every non-zero point, then there is an $\epsilon > 0$ such that $4\epsilon F^2(w) \le D^2_x(F^2)(w,w)$ for every non-zero $x$ and every $w$. We want to prove that the strong convexity inequality (\ref{eq:strong_convexity_def}) holds.  It follows from (\ref{eq:smooth_convex_criterion:4}) that
\[
F^2(u) + F^2(v) - \frac{1}{2}F^2(u+v)  \ge 4\epsilon \cdot F^2(u-v) \cdot \int_0^1 \chi(s) ds = 4\frac{1}{4}\epsilon \cdot F^2(u-v)
\]
Therefore $F$ is strongly convex with factor $\epsilon$. Conversely, fix $\epsilon$ and suppose that there exists a non-zero $x$ in $V$ and a vector $w$ such that
\[
D^2F^2_x(w,w) < 4\epsilon F^2(w).
\]
Consider the paths $u(s) = x + sw$ and $v(s) = x - sw$. Then it is simple to see using Taylor series that for all $s$ such that $u(s),v(s)\neq 0$,
\[
F^2(u(s)) + F^2(v(s)) - \frac{1}{2}F(u(s) + v(s))^2 =s^2 \cdot D^2F^2_x(w,w) + O(s^3) < s^24\epsilon F^2(w) + O(s^3)
\]
On the other hand, $F(u(s) - v(s))^2$ is given by
\[
F(u(s) - v(s))^2 = F(2sw)^2 = 4s^2 \cdot F(w)^2
\]
It follows that for every $\epsilon$ and sufficiently small $s$, we have
\[
F^2(u(s)) + F^2(v(s)) - \frac{1}{2}F^2(u(s) + v(s))< s^24\epsilon F^2(w) = \epsilon F^2(u(s)-v(s))
\]
Therefore $F$ is not strongly convex with factor $\epsilon$. This concludes the step and the proof of the lemma. \end{proof}

\noindent The above characterizations of strong convexity can be converted into formulas and bounds for the corresponding strong convexity factor.

\begin{lemma}[Smooth Factor] \label{lem:smooth_factor} Let $F$ be a smooth strongly convex asymmetric norm on $\R^n$. Then the convexity factor is given by
\[
\epsilon(F) = \frac{1}{4}\on{min}\big(D_x^2F^2(v,v) \; : \; x,v \in \partial D_F\big)
\]
Moreover, let $\kappa$ denote the minimum principal curvature of $\partial D_F$ and suppose that $r,R$ are constants such that $r F_{\on{std}} \le F \le R F_{\on{std}}$ where $F_{\on{std}} = |\cdot|$ is the Euclidean norm. Then
\[
\epsilon(F) \ge \frac{\kappa r^2}{8R^2(r+ \kappa)}
\]\end{lemma}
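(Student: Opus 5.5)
The plan is to read off both claims from the integral identity \eqref{eq:smooth_convex_criterion:4} and the Taylor argument in Step 3 of the proof of Lemma \ref{lem:smooth_convex_criterion}. Then we bound the Hessian of $F^2$ geometrically, using the splitting $T_x\R^n = \R x \oplus T_xY$ with $Y = \partial D_F$, as in Step 1.

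\emph{The formula.} Set $m = \min\{D^2_xF^2(v,v) : x,v \in \partial D_F\}$. This minimum exists by compactness and continuity of the Hessian away from the origin. The Hessian $D^2_xF^2$ is $0$-homogeneous in $x$ under positive scaling and $2$-homogeneous in $w$. Hence the inequality $D^2_xF^2(w,w) \ge 4\epsilon F^2(w)$ for all $x \neq 0$ and all $w$ holds if and only if $4\epsilon \le m$. Step 3 shows that if this Hessian inequality holds with $4\epsilon$, then $F$ is strongly convex with factor $\epsilon$. It also shows, via the Taylor expansion along $u(s)=x+sw$, $v(s)=x-sw$, that if the inequality fails for some pair $(x,w)$, then $F$ is not strongly convex with factor $\epsilon$. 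Together these give exactly $\epsilon(F) = m/4$.

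\emph{The bound.} Fix $x, v \in \partial D_F$ and write $v = ax + t$ with $t \in T_xY$. On $Y$ we have $F = 1$, so by \eqref{eq:smooth_convex_criterion:3}
\[
D^2_xF^2(v,v) = 2D^2_xF(v,v) + 2\,dF_x(v)^2 .
\]
By \eqref{eq:smooth_convex_criterion:1}, $D^2_xF(x,\cdot)=0$, so $D^2_xF(v,v) = D^2_xF(t,t)$. By \eqref{eq:smooth_convex_criterion:2}, this equals $|DF_x|\, S_x(t,t) \ge |DF_x|\,\kappa |t|^2$. Also $dF_x(x) = 1$ and $dF_x(t) = 0$, so $dF_x(v) = a$. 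Since $1 = dF_x(x) \le |DF_x|\,|x|$ and $|x| \le 1/r$, we get $|DF_x| \ge r$. Therefore
\[
D^2_xF^2(v,v) \ge 2r\kappa|t|^2 + 2a^2 .
\]

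\emph{The constraint and the minimization.} Since $F(v)=1 \le R|v|$ and $|x|\le 1/r$, we have
\[
1/R \le |v| \le |a|/r + |t| .
\]
Put $\alpha = |a|/r$ and $s = |t|$. It remains to minimize $2r^2\alpha^2 + 2r\kappa s^2$ subject to $\alpha + s \ge 1/R$. This is an elementary Lagrange computation: for $A,B>0$, $\min\{A\alpha^2 + Bs^2 : \alpha+s \ge c\} = c^2 AB/(A+B)$. It gives the lower bound
\[
\frac{1}{R^2}\cdot\frac{4r^3\kappa}{2r^2+2r\kappa} = \frac{2r^2\kappa}{R^2(r+\kappa)} .
\]
Dividing by $4$ yields
\[
\epsilon(F) \ge \frac{r^2\kappa}{2R^2(r+\kappa)} ,
\]
which is stronger than the stated bound.

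The only step needing care is the formula. There we must check that the two directions of Step 3 are sharp for the same $\epsilon$, rather than merely proving equivalence of the positivity conditions. The constant $4\int_0^1\chi = 1$ and the Taylor computation both match the threshold $4\epsilon F^2(w)$, so they are. The rest is the routine decomposition above.
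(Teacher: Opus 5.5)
Your proposal is correct and follows essentially the paper's proof. The formula is read off from Step 3 of Lemma \ref{lem:smooth_convex_criterion} via homogeneity, and the bound comes from the splitting $v = ax + t$, the estimate $D^2_xF^2(v,v) \ge 2a^2 + 2\kappa|DF_x|\,|t|^2$, and a Cauchy--Schwarz-type optimization using $|v|\ge 1/R$, $|x|\le 1/r$, $|DF_x|^{-1}\le 1/r$; you apply the last two bounds before optimizing, whereas the paper's weighted Cauchy--Schwarz applies them afterwards. Your sharper constant $\frac{r^2\kappa}{2R^2(r+\kappa)}$ is exactly what this argument yields, since the paper's intermediate display \eqref{eq:smooth_factor:3} gives away a factor of $4$, and it implies the stated bound.
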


\begin{proof} The first formula follows immediately from the argument in Step 3 of Lemma \ref{lem:smooth_convex_criterion}. Indeed, that argument shows that
\begin{equation} \label{eq:smooth_factor:1}
\epsilon(F) = \frac{1}{4}\on{max}\big(\epsilon \; : \; D_x^2F^2(v) \ge \epsilon F^2(v) \text{ for non-zero }x,v \in V\big)
\end{equation}
Since $F$ is 1-homogeneous, $F^2$ is 2-homogeneous and $D^2F^2$ is 0-homogeneous (as a tensor). Thus we can replace (\ref{eq:smooth_factor:1}) with a minimum over $x,v$ in $\partial D_F$. This demonstrates the first formula. 

\vspace{3pt}
For the second formula, fix $x$ and $v$ in $Y = \partial D_F \subset \R^n$. We may decompose $v$ as a sum $v = tx + w$ where $w \in T_xY$. Then the formulas (\ref{eq:smooth_convex_criterion:1}-\ref{eq:smooth_convex_criterion:3}) imply that
\begin{equation} \label{eq:smooth_factor:2}
D^2F^2_x(v,v) = 2t^2 + 2|DF_x| \cdot S_x(w,w) \ge 2(t^2 + \kappa |DF_x| |w|^2)
\end{equation}
Next, we apply the following weighted Cauchy-Schwarz inequality.
\[
|u_1 + u_2|^2 \le (a_1 + a_2)(\frac{|u_1|^2}{a_1} + \frac{|u_2|^2}{a_2})
\]
By taking $u_1 = tx$, $u_2 = w$, $a_1 = |x|^2$ and $a_2 = (\kappa|DF_x|)^{-1}$, we acquire the following lower bound.
\[|v|^2 = |tx + w|^2 \le \big(|x|^2 + \frac{1}{\kappa |DF_x|}\big) \cdot \big(t^2 + \kappa|DF_x||w|^2\big) \]
Rearranging this bound and combining it with (\ref{eq:smooth_factor:2}) yields the following estimate.
\begin{equation} \label{eq:smooth_factor:3}
D^2F^2_x(v,v) \ge \frac{\kappa |v|^2}{2(|DF_x|^{-1} + \kappa |x|^2)} 
\end{equation}
Finally, note that we have the following estimates on the terms appearing in (\ref{eq:smooth_factor:3}). 
\[
|v| \ge \frac{1}{R}F(v) = \frac{1}{R} \qquad |x| \le \frac{1}{r}F(x) = \frac{1}{r}\qquad \frac{1}{|DF_x|} = \langle x,\frac{DF_x}{|DF_x|}\rangle \le |x| \le \frac{1}{r}
\]
Combining these bounds with (\ref{eq:smooth_factor:3}) yields the desired second formula.\end{proof}

Finally, we have the following duality property for strongly convex asymmetric norms. This is a standard result and a proof can be found in \cite[Prop 14.8.1]{bao2012introduction}.

\begin{lemma}[Duality] \label{lem:stron_costrong_duality} The dual norm $F^*$ of a smooth strongly convex asymmetric norm $F$ is smooth and strongly convex.
\end{lemma}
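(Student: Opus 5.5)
The plan is to pass through the Legendre transform of $\tfrac12 F^2$. Set $H = \tfrac12 F^2$. By Lemma \ref{lem:smooth_convex_criterion}(c) with $k=2$, $H$ is smooth away from the origin and its Hessian is positive definite there; it is $C^1$ at the origin, strictly convex, and grows quadratically. Its Legendre transform is
\[
H^*(p) = \sup_v \big(p(v) - H(v)\big) = \tfrac12 (F^*)^2(p),
\]
as one checks from homogeneity: maximize over $v = t w$ with $F(w)=1$, so that $\sup_t (t\,p(w) - t^2/2) = \tfrac12 p(w)^2$ whenever $p(w)>0$, and then take the supremum over $w$.

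The key step is that the gradient map $dH : V\setminus 0 \to V^*\setminus 0$ is a diffeomorphism. It is a local diffeomorphism because $D^2H$ is nondegenerate. It is injective by strict monotonicity: $(dH_x - dH_y)(x-y) > 0$ for $x \ne y$, which follows by integrating the positive definite Hessian along the segment. If the segment passes through the origin, we use strict convexity of $H$ directly. It is surjective because for each $p$ the function $v\mapsto p(v)-H(v)$ is coercive and strictly concave, so it has a unique maximizer $v$ with $dH_v = p$. Then $dH^* = (dH)^{-1}$ on $V^*\setminus 0$, so $H^*$ is smooth there. Since $F^* = \sqrt{2H^*}$ and $H^*>0$ away from zero, $F^*$ is smooth.

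For strong convexity, differentiate the identity $dH^*\circ dH = \mathrm{id}$ to get
\[
D^2H^*_{dH_x} = (D^2H_x)^{-1},
\]
which is positive definite. Hence $D^2 (F^*)^2$ is positive definite at every nonzero covector. By Lemma \ref{lem:smooth_convex_criterion} applied to $F^*$ with $k=2$ (direction (b)$\Rightarrow$(a)), $F^*$ is strongly convex.

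I expect the main obstacle to be the global bijectivity of $dH$ on $V\setminus 0$, in particular handling segments through the origin and confirming that $dH$ never vanishes off zero. Euler's relation $dH_x(x) = 2H(x) > 0$ settles the non-vanishing. For monotonicity, I would use the convexity of $H$ together with strict convexity of $D_F$ (Lemma on strict convexity) to get strict inequality even when the segment meets the origin.
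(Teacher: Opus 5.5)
Your argument is correct, and it is essentially the standard Legendre-transform proof to which the paper defers; the paper gives no argument of its own beyond citing Bao--Chern--Shen, Prop.~14.8.1. That proof likewise shows that $y\mapsto \tfrac12 d(F^2)_y$ is a diffeomorphism $V\setminus 0\to V^*\setminus 0$ and that the Hessian of $\tfrac12 (F^*)^2$ at $\tfrac12 d(F^2)_y$ is the inverse of the Hessian of $\tfrac12 F^2$ at $y$; Lemma \ref{lem:smooth_convex_criterion} then turns positive definiteness into the paper's strong convexity inequality, exactly as you do (injectivity also follows directly from the uniqueness of the maximizer of the strictly concave function $v\mapsto p(v)-H(v)$, which avoids the segment-through-the-origin case).
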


\subsection{Squared Norms And Metrics} We next introduce a special class of norms that arise naturally in our constructions. Precisely, consider the \emph{squaring map} on the complex plane
\begin{equation}
\sigma:\C \to \C \qquad\text{given by}\qquad \sigma(z) = z^2
\end{equation}

\begin{definition}[Squared Domain] \label{def:squared_domain} A domain $L$ in a 2-dimensional affine space $E$ is \emph{squared} if there is a centrally symmetric convex domain $K \subset \C$ and an affine isomorphism $A:E \to \C$ with
\[
A(L) = \sigma(K)
\]
\end{definition}

\begin{definition}[Squared Norm] An asymmetric norm $F$ on a 2-dimensional vector space $V$ is \emph{squared} if the unit ball $D_F$ is a squared convex body.
\end{definition}
The following key lemma states that any squared norm is automatically strongly convex. 

\begin{lemma} \label{lem:square_stays_convex} Let $F$ be a squared asymmetric norm on a 2-dimensional real vector space $V$. Then $F$ is strongly convex.
\end{lemma}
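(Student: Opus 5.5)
The plan is to reduce to smooth squared bodies, obtain a uniform curvature lower bound for them from central symmetry, and then pass to the limit. The curvature bound comes from a "parabola barrier" argument. Strong convexity is then quantified by Lemma \ref{lem:smooth_factor} and transferred to the original norm by the closedness in Remark \ref{rmk:openess}.

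\textbf{Setup and approximation.} Write $A(D_F) = \sigma(K)$ with $K \subset \C$ centrally symmetric and convex, and let $p = A(0)$. Then $p$ lies in the interior of $\sigma(K)$, and $F$ is the linear pullback under $dA$ of the gauge $G_K$ of $\sigma(K) - p$. Strong convexity is invariant under linear isomorphisms, up to changing the factor. So it suffices to show that $G_K$ is strongly convex.

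Choose smooth, uniformly convex, centrally symmetric bodies $K_n \to K$ in the Hausdorff topology. Since $0 \in \on{int}(K)$, the point $0$ lies in the interior of each $K_n$, uniformly in $n$. The map $\sigma$ restricted to $\partial K_n$ is therefore a smooth $2\!:\!1$ map onto the smooth curve $\partial\sigma(K_n)$. Also $\sigma(K_n) \to \sigma(K)$, so for large $n$ the point $p$ stays in the interior of $\sigma(K_n)$. Consequently the gauges $G_n$ of $\sigma(K_n) - p$ satisfy $r F_{\on{std}} \le G_n \le R F_{\on{std}}$ with $r, R$ independent of $n$, and $G_n \to G_K$ uniformly on compact sets.

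\textbf{Key step: uniform curvature bound via central symmetry.} Fix a boundary point $z_0 \in \partial K_n$ and a supporting line $\ell$ of $K_n$ at $z_0$. By central symmetry, $-\ell$ supports $K_n$ at $-z_0$. Hence, after a rotation by $e^{i\theta}$, $K_n$ is contained in the strip $\{x + iy : |x| \le a\}$, where $a$ is the distance from $0$ to $\ell$. Note that $a$ lies between the inradius and circumradius of $K_n$, so $a$ is bounded above and below uniformly in $n$.

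Squaring sends this strip onto the closed region bounded by the parabola $\{u = a^2 - v^2/(4a^2)\}$, rotated by $e^{2i\theta}$. Thus $\sigma(K_n)$ lies inside a parabola whose boundary touches $\partial\sigma(K_n)$ at $\sigma(z_0)$. By comparison of tangent curves, the curvature of $\partial\sigma(K_n)$ at $\sigma(z_0)$ is at least the curvature of the parabola at that point. That point has $|v| \le \on{diam}$ bounded uniformly, so its parabola curvature is bounded below by some $\kappa_0 > 0$ depending only on the uniform bounds on $a$ and on the diameter. Therefore $\partial\sigma(K_n)$, and hence the unit circle of $G_n$, has minimal curvature at least $\kappa_0$ for all large $n$.

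\textbf{Conclusion.} Each $G_n$ is a smooth asymmetric norm whose unit ball is uniformly convex. By Lemma \ref{lem:smooth_convex_criterion} each $G_n$ is strongly convex, and by Lemma \ref{lem:smooth_factor} its factor satisfies $\epsilon(G_n) \ge \kappa_0 r^2/(8R^2(r+\kappa_0))$, a bound independent of $n$. Since $G_n \to G_K$ and the set of norms with factor at least this $\epsilon$ is closed (Remark \ref{rmk:openess}), $G_K$ is strongly convex, and hence so is $F$.

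The main obstacle is the comparison step. One must check that the contact point $\sigma(z_0)$ lies on the parabola in a region where the parabola's curvature is uniformly controlled; this holds since $|v| = 2|x||y| \le 2a \cdot \on{diam}(K_n)$. One must also make the curve-comparison argument rigorous, which is standard for smooth curves tangent at a point with one enclosed by the other. A secondary point is keeping $p$ interior with uniform $r, R$ along the approximation, which follows from Hausdorff convergence.
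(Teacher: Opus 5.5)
Your proof is correct. Its outer framework is the paper's: approximate $K$ by smooth centrally symmetric $K_n$, get a uniform lower curvature bound and uniform constants $r,R$, then apply Lemma~\ref{lem:smooth_factor} and Remark~\ref{rmk:openess}. The real difference is how you obtain the curvature bound.

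The paper computes in polar coordinates. Writing $\tau(\theta)=\rho(\theta/2)^2$, it verifies the identity $\tau^2+2\dot\tau^2-\tau\ddot\tau=\frac{\rho^2}{2}((\rho^2+2\dot\rho^2-\rho\ddot\rho)+(\rho^2+\dot\rho^2))$. It then drops the first bracket, which is the nonnegative curvature numerator of $\partial K$, and arrives at $\kappa\ge\tau^2/(2(\tau^2+\dot\tau^2)^{3/2})$. Making this uniform along the approximation then needs uniform Lipschitz bounds on the $\tau_i$.

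You instead use central symmetry to trap $K_n$ in a supporting strip, and compare $\partial\sigma(K_n)$ with the square of that strip, a parabolic region. The two estimates are really the same inequality. The parabola's curvature at $\sigma(z_0)$ is $a/(2|z_0|^3)$. Since the support distance is $a=\rho^2/\sqrt{\rho^2+\dot\rho^2}$, this equals $1/(2\rho\sqrt{\rho^2+\dot\rho^2})$. That is exactly what the paper's identity gives after discarding only the curvature term of $\partial K$; the paper then weakens it further. So your parabola is the extremal case in which $\partial K$ is flat at $z_0$.

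What your route buys:
\begin{itemize}
\item It avoids second derivatives of radial functions.
\item It makes the role of central symmetry transparent.
\item It gives the bound $a/(2|z_0|^3)\ge r_K/(2R_K^3)$, which depends only on the in- and circumradius of $K$ about the origin, so uniformity under Hausdorff approximation is immediate.
\item Your handling of the affine normalization is more explicit than the paper's. You translate by $p=A(0)$ and use that strong convexity is invariant under linear isomorphisms (in fact with the same factor), so the unit circle of $G_n$ is literally a translate of $\partial\sigma(K_n)$.
\end{itemize}
What the paper's computation buys in exchange is the full curvature of $\partial\sigma(K)$ in terms of that of $\partial K$, not just a lower bound.
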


\begin{proof} Consider the closed unit ball $D_F \subset V$. By assumption, there is an affine isomorphism
\[
A:V \to \C \qquad\text{such that}\qquad A(D_F) = \sigma(K)
\]
for some centrally symmetric convex body $K$, which necessarily contains the origin in its interior. Let $\rho$ and $\tau$ denote the radial functions parametrizing the boundary of $K$ and $\sigma(K)$ respectively.
\[\rho:S^1 \rightarrow \R_+ \qquad\text{and}\qquad \tau:S^1 \rightarrow \R_+\]
Recall that these functions are uniquely defined by the property that the ray of angle $\theta$ through the origin intersects $\partial K$ at radius $\rho(\theta)$ and $\partial(\sigma(K))$ at $\tau(\theta)$. Since $K$ is centrally symmetric and $\sigma$ is the squaring map, the radial functions satisfy
\[\rho(\theta+\pi) = \rho(\theta) \qquad\text{and}\qquad \tau(\theta) = \rho(\theta/2)^2\]
We now break the proof into the case where $K$ is smooth and the general case. 

\vspace{3pt}

\noindent {\bf Smooth Case.} First assume that $K$ is smooth. In this case, the convexity of $K$ is equivalent to the non-negativity of the (scalar) principal curvature of $\partial K$. Under the parametrization determined by $\rho$, this is
\[
\kappa_K = \frac{\rho^2 +2\dot \rho^2 - \rho \ddot{\rho}}{(\rho^2 + \dot{\rho}(\theta)^2)^{3/2}}\geq 0.
\]
The strong convexity of $\sigma(K)$ is equivalent to the positivity of curvature of the curve defined by $\tau$, which we verify as follows.
\[\tau^2 +2\dot \tau^2  - \tau \ddot{\tau}  = \rho^4 +2 (\rho \dot \rho)^2 - \rho^2\left(\frac{1}{2} \dot \rho^2 +\frac{1}{2}\rho \ddot{\rho}\right)\]
\[= \frac{\rho^2}{2}\left(2\rho^2 + 3 \dot\rho^2 -\rho \ddot{\rho}\right) = \frac{\rho^2}{2}\left((\rho^2 + 2 \dot\rho^2 -\rho \ddot{\rho}) + (\rho^2 +  \dot\rho^2)\right)\geq \frac{\rho^2}{2}\left(0 + (\rho^2 +  \dot\rho^2)\right) \geq \frac{\rho^4}{2}\]
This implies that the principal curvature $\kappa_L$ of the squared body $L = A(D_F) = \sigma(K)$ is bounded below as follows.
\begin{equation} \label{eq:square_convex_1}
\kappa_L \ge \frac{\tau^2}{2(\tau^2 + \dot{\tau}^2)^{3/2}} > 0
\end{equation}
\noindent {\bf General Case.} Now consider the general case. We assume without loss of generality that $V = \R^2$. 

\vspace{3pt}

Choose a sequence of centrally symmetric, smooth convex bodies $K_i \subset \C$ converging to $K$ in the Hausdorff sense. Let $L_i = \sigma(K_i)$ be the corresponding squared bodies with radial functions $\tau_i$ and principal curvatures $\kappa_i$. Note that
\[
\tau_i \to \tau \qquad\text{uniformly on $S^1$.}
\]
Moreover, since $K$ is convex, the radial functions $\rho$ and $\tau$ are Lipschitz and there is a constant $C$ such that $\tau_i$ is $C$-Lipschitz for all $i$. It follows from (\ref{eq:square_convex_1}) that the principal curvatures $\kappa_i$ are uniformly bounded below by a positive constant. 

\vspace{3pt}

Next, let $F_i$ be the smooth asymmetric norm on $V = \R^2$ with unit ball $D_i = A^{-1}(L_i)$. Then $F_i$ converges to $F$ in the compact-open topology. Moreover, the principal curvatures of the unit spheres $\partial D_i$ are uniformly bounded below by a constant $c > 0$. Moreover, there are constants $r$ and $R$ such that
\[
rF_{\on{std}} \le F_i \le RF_{\on{std}} \qquad\text{for all $i$.}
\]
where $F_{\on{std}}$ is the standard Euclidean norm. Therefore, Lemma \ref{lem:smooth_factor} implies that the strong convexity factors of $F_i$ satisfy
\[
\epsilon(F_i) \ge \frac{cr^2}{8R^2(r + c)}
\]
By Remark \ref{rmk:openess}, $\epsilon(F)$ also satisfies this lower bound and thus $F$ is strongly convex. \end{proof}

\subsection{Finsler Metrics} \label{subsec:finsler_metrics} We next recall the notion of a Finsler metric, and the associated notion of Holmes-Thompson volume. 

\begin{definition}[Finsler Metric] \label{def:finsler_metric} A \emph{Finsler metric} $F$ on a manifold $M$ is a smooth function on the tangent bundle $TM\setminus 0$ that restricts to a strongly convex asymmetric norm on each tangent space. 
\end{definition}

\begin{remark}[Regularity] A Finsler metric will be called continuous or differentiable if it has the corresponding regularity as a function on $TM\setminus 0$. A Finsler metric is Lipschitz if the  corresponding function on $TM\setminus 0$ is Lipschitz and the fibers are convex, rather than strictly convex.
\end{remark}

There are several inequivalent notions of volume in the Finsler setting. In this paper, we focus on the following variant that arises naturally from the symplectic point of view.

\begin{definition}[Holmes-Thompson Volume]
    The \emph{Holmes-Thompson volume} of a manifold $M$ with respect to a Finsler metric $F$ is the symplectic volume of the unit codisk bundle of $F$.
\[
\volHT (M,F) := \frac{1}{n!} \cdot \int_{D^*_F M}\Omega_{\on{std}} \wedge \dots \wedge \Omega_{\on{std}} 
\]
\end{definition}

\begin{remark}[Normalization] Under our convention, the Holmes-Thompson volume of a Riemannian metric differs from the Riemannian volume by the volume of cotangent fibers. Thus
\[\volHT(S^2, G_{\on{std}}) = 4\pi^2 \qquad\text{and}\qquad \volHT(\RP^2, G_{\on{std}}) = 2\pi^2\]
where $G_{\on{std}}$ denotes the standard round Riemannian metrics on $S^2$ and $\mathbb{RP}^2$ in these formulas. \end{remark}

\begin{definition}[Reversible] A Finsler metric $F$ is \emph{reversible} if the fiberwise norm is symmetric on each tangent space. \end{definition}

\begin{definition}[Reversibilization] The \emph{reversibilization} $\bar{F}$ of a Finsler metric $F$ on a manifold $M$ is defined by the following formula.
\[
\bar{F}_x(v) = \frac{1}{2}(F_x(v) + F_x(-v))
\]
\end{definition} 

We will need a few elementary results about Finsler metrics. First, we have the following variant of the John ellipsoid theorem for reversible Finsler metrics. 

\begin{thm}[John Metric] \label{thm:johns_metric} Let $F$ be a reversible Finsler metric on a manifold $M$. Then there is a Riemannian metric $G$ such that
\[
G \le F^2 \le n \cdot G \qquad\text{where $n = \on{dim}(M)$.}
\]
\end{thm}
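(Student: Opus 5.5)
The plan is to apply John's ellipsoid theorem fiberwise and then show that the resulting choice of ellipsoids varies smoothly with the base point. Fix $x \in M$ and let $B_x = \{v \in T_xM : F_x(v) \le 1\}$ be the unit ball of $F_x$. Since $F$ is reversible, $B_x$ is a centrally symmetric convex body. The symmetric version of John's theorem provides a unique ellipsoid $E_x$ of maximal volume inside $B_x$, centered at the origin, such that
\[
E_x \subset B_x \subset \sqrt{n}\, E_x .
\]
Write $E_x = \{v : G_x(v,v) \le 1\}$ for a positive definite quadratic form $G_x$. The inclusions translate into norm inequalities. From $E_x \subset B_x$ we get $F_x(v) \le G_x(v,v)^{1/2}$, and from $B_x \subset \sqrt{n}E_x$ we get $G_x(v,v)^{1/2} \le \sqrt{n}\,F_x(v)$. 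Hence
\[
\tfrac{1}{n}G_x \le F_x^2 \le G_x ,
\]
and setting $G' = \tfrac1n G$ gives $G' \le F^2 \le nG'$. This is exactly the claimed inequality, pointwise in $x$.

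The remaining issue, and the main obstacle, is regularity: we need $x \mapsto G_x$ to be a smooth (or at least continuous) Riemannian metric. The route I would take uses uniqueness of the maximal-volume ellipsoid and characterizes it as the maximizer of a strictly concave function. Work in a local trivialization $TU \cong U \times \R^n$ and parametrize ellipsoids by positive definite symmetric matrices $A$, with $E = A(\text{unit ball})$. Maximizing $\log\det A$ subject to $E \subset B_x$ is a concave program, and the constraint set depends continuously on $x$. Because the maximizer is unique, Berge's maximum theorem gives continuity of $x \mapsto A_x$, and the tensor $G$ is then well defined globally since the construction is intrinsic, i.e.\ independent of the trivialization.

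If genuine smoothness is required, I would take a less rigid approach instead of proving smooth dependence of the John ellipsoid, which can fail. The inequality is preserved under small perturbation after a slight loosening, so I would either convolve in charts and glue with a partition of unity, or simply average. Averaging works because the set of quadratic forms $G$ satisfying $G \le F_x^2 \le nG$ at a point is convex: both inequalities are linear in $G$. A partition-of-unity combination $\sum \phi_i G^{(i)}$ of local metrics that each satisfy the inequality on $\operatorname{supp}\phi_i$ therefore still satisfies it.

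So the concrete plan runs as follows. First, around each point, pick the continuous John metric $G$. Second, approximate $G$ by smooth constant-in-chart forms, then use the openness of the slightly weakened condition and the compactness of small closed neighborhoods to get local smooth $G^{(i)}$ satisfying $G^{(i)} \le F^2 \le n(1+\delta) G^{(i)}$. Third, glue these by convexity. To get the sharp constant $n$ exactly, I would note that the paper's later uses only need a dimensional constant, and accept a continuous $G$ or a constant $n+\delta$ if strict smoothness is problematic.
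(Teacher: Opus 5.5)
The paper does not prove this statement. It is quoted as a standard fiberwise form of John's theorem, so your fiberwise John argument is the natural one, and most of it is sound. The translation of $E_x \subset B_x \subset \sqrt{n}E_x$ into $\frac1n G_x \le F_x^2 \le G_x$ is correct. The John ellipsoid is linearly natural, hence independent of the trivialization. The admissible set $\{G : G \le F_x^2 \le nG\}$ is convex, which justifies gluing by a partition of unity. The Berge continuity step is actually unnecessary: a constant-in-chart form near $x_0$ uses only the John form at $x_0$ together with continuity of $F$.

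The gap is in the last step. Your smoothing only produces $G \le F^2 \le n(1+\delta)G$, and you then propose to settle for a continuous $G$ or the constant $n+\delta$. Neither is the statement, which asks for a (smooth) Riemannian metric with the sharp constant $n$. You also cannot lean on downstream applications, since this result is not invoked anywhere else in the paper.

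The missing observation is that only one of the two John inequalities is tight. Finsler metrics here are strongly, hence strictly, convex. For a strictly convex symmetric body in dimension $n \ge 2$, the outer inclusion has room to spare:

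Normalize $E_{x_0}$ to the Euclidean unit ball. Take John's contact points $u_i \in \partial B_{x_0} \cap \partial E_{x_0}$ and weights $c_i > 0$ with $\sum_i c_i\, u_i \otimes u_i = \mathrm{Id}$ and $\sum_i c_i = n$. The hyperplanes $\{\langle \cdot, \pm u_i\rangle = 1\}$ support $B_{x_0}$, so $|v|^2 = \sum_i c_i \langle v,u_i\rangle^2 \le n$ on $B_{x_0}$. Equality would force $\langle v,u_i\rangle = \pm 1$ for every $i$. By strict convexity this means $v = \pm u_i$, which is absurd since $|u_i| = 1 < \sqrt{n}$. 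Hence $G_{x_0} \le n(1-2\eta)F_{x_0}^2$ for some $\eta > 0$.

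Now take the constant form $H = (1+\delta)G_{x_0}/n$ with $0 < \delta \le \eta$. Work at any $x$ in a chart neighbourhood where $(1-\delta)F_{x_0}^2 \le F_x^2 \le (1+\delta)F_{x_0}^2$. There $H$ satisfies both
$F_x^2 \le (1+\delta)F_{x_0}^2 \le (1+\delta)G_{x_0} = nH$ and
$H \le (1+\delta)(1-2\eta)F_{x_0}^2 \le (1-\delta)F_{x_0}^2 \le F_x^2$.
So $H$ lies in the convex admissible set near $x_0$. Your partition-of-unity gluing then yields a smooth $G$ with $G \le F^2 \le nG$ exactly. For $n = 1$, just take $G = F^2$.
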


\noindent Second, we have the following estimate for the volume of the reversibilization, which is a variant of the Rogers-Shephard inequality for Finsler metrics.  

\begin{lemma}[Reversibilization Volume] \label{lem:reversibilization_volume} The Holmes-Thompson volume of the reversibilization of a continuous Finsler metric $F$ on a compact manifold $M$ satisfies
\[
\volHT(M,\bar{F}) \le \frac{(2n)!}{2^n(n!)^2} \cdot \volHT(M,F).
\]
with equality if and only if the unit disk $D_F M$ of $F$ is a simplex in each fiber. \end{lemma}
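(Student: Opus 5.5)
The plan is to reduce Lemma \ref{lem:reversibilization_volume} to a fiberwise statement about convex bodies in a single cotangent space $T^*_xM$, and then invoke the Rogers--Shephard inequality.

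\textbf{Fiberwise reduction.} Since the Holmes--Thompson volume is obtained by integrating, over $x \in M$, the Lebesgue volume (with respect to the canonical symplectic volume on $T_xM \oplus T^*_xM$) of the dual unit ball $D^*_{F,x} \subset T^*_xM$, it suffices to prove the pointwise inequality
\[
\on{vol}(D^*_{\bar F,x}) \le \binom{2n}{n}2^{-n}\,\on{vol}(D^*_{F,x})
\]
together with its equality case. Writing $K = D^*_{F,x}$, the dual norm is $F_x(v) = h_K(v) := \max_{p\in K} p(v)$, the support function of $K$. Then $\bar F_x(v) = \tfrac12(h_K(v)+h_{-K}(v)) = h_{(K-K)/2}(v)$, because support functions are additive under Minkowski sums. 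Hence $D^*_{\bar F,x} = \tfrac12(K + (-K)) = \tfrac12 DK$, where $DK$ denotes the difference body. Note that $K$ contains the origin in its interior.

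\textbf{Rogers--Shephard.} The Rogers--Shephard inequality gives $\on{vol}(DK) \le \binom{2n}{n}\on{vol}(K)$, with equality if and only if $K$ is a simplex. Scaling by $1/2$ in the $n$-dimensional fiber divides the volume by $2^n$. This yields the constant $\binom{2n}{n}2^{-n} = \frac{(2n)!}{2^n (n!)^2}$ pointwise. Integrating over $M$, which requires only measurability and continuity of the fiberwise volume and holds since $F$ is continuous, gives the inequality.

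\textbf{Equality case.} This is the only delicate step. If the unit codisk is a simplex in every fiber, equality holds fiberwise and therefore globally. Conversely, suppose equality holds globally. The pointwise inequality holds everywhere, and both sides are continuous in $x$, so the pointwise inequality must be an equality at every $x$. The Rogers--Shephard equality case then forces each $D^*_{F,x}$ to be a simplex. I read the statement's ``unit disk'' as referring to this fiberwise body, which is the codisk relevant to the volume. If the primal disk is intended instead, one would additionally use that the polar of a simplex containing the origin is again a simplex, so the two readings are equivalent.

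One caveat: for a genuinely smooth strongly convex Finsler metric, a simplex cannot occur, so equality then simply never happens. This does not affect the proof.
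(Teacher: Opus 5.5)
Your proposal is correct and follows essentially the same route as the paper. Both reduce to the fiberwise codisks $K_x$, identify the codisk of $\bar F$ as $(K_x - K_x)/2$, apply Rogers--Shephard with its simplex equality case, and pass between codisk and disk using that the polar of a simplex with the origin in its interior is again a simplex. Your support-function derivation of $\bar K_x = (K_x - K_x)/2$ spells out what the paper leaves as ``one may check'', and your continuity argument makes explicit the step from global equality to fiberwise equality, which the paper leaves implicit.
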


\begin{proof} We may assume without loss of generality that $M$ is a domain in $\R^n$. The codisk bundles of $F$ and $\bar{F}$ can then be viewed as two families of convex subsets
\[
K_x \subset \R^n \quad\text{and}\quad \bar{K}_x \subset \R^n \qquad\text{parametrized by $x \in M$}
\]
The Holmes-Thompson volumes of $M$ with respect to $F$ and $\bar{F}$ are given by the integrals of $\on{vol}(K_x)$ and $\on{vol}(\bar{K}_x)$ over $M$, respectively, where $\on{vol}(-)$ denotes the Euclidean volume. On the other hand, by the symmetrization construction, one may check that $\bar{K}$ and $K$ are related by
\[
\bar{K}_x = (K_x - K_x)/2
\]
We can then apply the Rogers-Shephard inequality (cf. Schneider \cite[Thm 10.1.4]{schneider1993convex}) to show that
\[
\on{vol}(\bar{K}_x) = \frac{1}{2^n}\on{vol}(K_x - K_x) \le \frac{(2n)!}{2^n(n!)^2} \cdot \on{vol}(K_x)
\]
Finally, the Rogers-Shephard inequality achieves equality if and only if the convex body is a simplex \cite{schneider1993convex}. Therefore, equality can occur in the above inequality for every $x$ if and only if the dual ball ${K}_x$ is a simplex at each $x$. Since a convex body is a simplex if and only if its dual is a simplex, this yields the result. \end{proof}

\subsection{Transverse Isotopies Of Cotangent Lifts} \label{subsec:linking} Recall that a sub-manifold $S$ in a contact manifold $(Y,\xi)$ is \emph{transverse} if the tangent space of $S$ is everywhere transverse to $\xi$, and an isotopy $S_t \subset Y$ is \emph{transverse} if $S_t$ is transverse for each $t$. 

\vspace{3pt}

We conclude this section with a discussion of transverse isotopies of cotangent lifts of curves in surfaces. We first must recall the notion of the Legendre transform.

\begin{definition}[Legendre Transform] The \emph{Legendre transform} associated to a smooth, strongly convex asymmetric norm $F$ on a vector space $V$ is the diffeomorphism
\[
\mathcal{L}:S_F \to S^*_F
\]
from the unit sphere $S_F = \partial D_F$ of $F$ to the dual unit sphere of $F$ that maps a unit vector $v$ of $F$ to the unique unit dual vector $\mathcal{L}(v)$ whose kernel is the tangent space to $S_F$ at the point $v$ and satisfies  $\mathcal{L}(v)(v)=1$. \end{definition}

For a manifold $M$ with a Finsler metric $F$, denote the unit tangent bundle by $S_F M=\partial D_F M$. Likewise, let $F^*$ denote the dual metric and let
\[S_F^*M =\partial D_F^*M\]
be the unit cotangent bundle with respect to the dual metric. The fiberwise Legendre transform then determines a smooth bundle isomorphism
\[
\mathcal{L}:S_FM \to S^*_FM \qquad\text{mapping $v$ in $T_xM$ to $\mathcal{L}_x(v)$ in $T^*_xM$.}
\]

\begin{definition}[Tangent/Cotangent Lifts]\label{def:cotangent_lift} The \emph{(unit) tangent lift} of an immersed loop $\gamma:S^1\rightarrow M$ in a Finsler manifold $(M,F)$ is the loop in $S_FM$ defined as follows.
\begin{equation}
    \hat \gamma:S^1\rightarrow S_FM \qquad\text{given by}\qquad \hat\gamma(t) = \left(\gamma(t)\ ,\  \frac{\frac{d\gamma}{dt}(t)}{F_{\gamma(t)}(\frac{d\gamma}{dt}(t))}\right).
\end{equation}
The \emph{(unit) cotangent lift} of $\gamma$ to $S_F^* M$ is the composition of the tangent lift $\hat{\gamma}:S^1 \to S_FM$ with the Legendre transform $\mathcal{L}:S_FM \to S^*_FM$. 
\end{definition}

\begin{lemma} \label{lem:cotangent_lifts} 
    Cotangent lifts of immersed loops are transverse to the contact structure on $S^*M$.
\end{lemma}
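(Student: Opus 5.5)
The plan is to compute directly with the contact form. The contact structure on $S^*_FM$ is $\xi = \ker \lambda|_{S^*_FM}$, where $\lambda$ is the tautological Liouville form on $T^*M$, given by $\lambda_{(x,p)}(w) = p(d\pi(w))$ with $\pi:T^*M \to M$ the projection. It therefore suffices to show that $\lambda$ evaluates positively on the velocity of the cotangent lift at every time.

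Let $\gamma:S^1 \to M$ be immersed, and write its cotangent lift as $\tilde\gamma(t) = (\gamma(t), p(t))$ with $p(t) = \mathcal{L}_{\gamma(t)}(\hat\gamma(t))$. Since $\pi \circ \tilde\gamma = \gamma$, we get $d\pi(\dot{\tilde\gamma}(t)) = \dot\gamma(t)$. Hence
\[
\lambda(\dot{\tilde\gamma}(t)) = p(t)(\dot\gamma(t)) = F_{\gamma(t)}(\dot\gamma(t)) \cdot \mathcal{L}_{\gamma(t)}(\hat\gamma(t))\big(\hat\gamma(t)_{\text{fib}}\big),
\]
where $\hat\gamma(t)_{\text{fib}} = \dot\gamma/F(\dot\gamma)$ is the unit vector. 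By the defining normalization of the Legendre transform, $\mathcal{L}(v)(v) = 1$. So
\[
\lambda(\dot{\tilde\gamma}(t)) = F_{\gamma(t)}(\dot\gamma(t)),
\]
which is strictly positive because $\gamma$ is immersed and $F$ is positive away from the zero section.

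Consequently $\dot{\tilde\gamma}(t) \notin \ker\lambda = \xi$ for all $t$, so the lift is transverse, and in fact positively transverse with respect to the coorientation given by $\lambda$. For the lift to be a well-defined smooth loop, we need $\mathcal{L}$ to be a smooth bundle map and $\hat\gamma$ to be smooth. Both hold: $\gamma$ is an immersion, and $F$ is smooth and strongly convex, so the Legendre transform is a diffeomorphism as stated above.

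There is no real obstacle. The only point requiring care is the sign convention, namely identifying $\xi$ with $\ker\lambda$ restricted to the cosphere bundle. With that convention the argument shows positive transversality.
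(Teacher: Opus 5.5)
Your proposal is correct and matches the paper's proof essentially line for line. Both evaluate the tautological contact form on the lift's velocity and use $d\pi(\dot{\tilde\gamma}) = \dot\gamma$ together with the normalization $\mathcal{L}(v)(v)=1$ to obtain $\lambda(\dot{\tilde\gamma}) = F(\dot\gamma) > 0$. Your extra remark that the lift is smooth, because the Legendre transform is a smooth bundle map, is a harmless addition.
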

\begin{proof}
    Recall that the standard contact form $\alpha$ on $S_F^*M$ is defined so that, at a point $z  = (x,p)$ in $S^*_FM$ and a tangent vector $w$ to $S^*_FM$ at the point $(x,p)$, we have
    \[
    \alpha_z(w) = p(d\pi_z(w))\qquad \text{where} \qquad \pi:S_F^*M\rightarrow M\text{ is the bundle projection.}
    \]
    Now fix a parameter $t$, and for simplicity fix the following notation.
    \[ z = (x,p) = \widetilde{\gamma}(t) \qquad w = \frac{d\tilde{\gamma}}{dt}(t) \qquad v = d\pi_z(w) = \frac{d\gamma}{dt}(t)\]
    Note that the final identity follows from the fact that $\gamma = \pi \circ \widetilde{\gamma}$ by construction of the cotangent lift. Moreover, $p$ is given by the fiberwise Legendre transform
    \[
    p = \mathcal{L}_x(\frac{v}{F_x(v)})
    \]
    In particular, $p$ applied to the vector $v/F_x(v)$ is one. Finally, to show that $\widetilde{\gamma}$ is transverse to $\xi$ at $t$, we must show that $\alpha_z(w) > 0$. We simply compute that
    \[
    \alpha_z(w) = p(d\pi_z(w)) = p(v) = F_x(v) \cdot p(\frac{v}{F_x(v)}) = F_x(v) > 0
    \]
    Note that $F_x(v)$ is positive since $\gamma$ is an immersion, and thus $v$ is non-zero.
\end{proof}
\begin{cor}\label{cor:tangent_lifts_trans_isot}
    The cotangent lifts of  embedded loops in $S^2$ are all transversely isotopic.
\end{cor}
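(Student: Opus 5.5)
The plan is to use that the space of embedded loops in $S^2$ is path-connected (up to orientation), and to push an isotopy of curves to an isotopy of cotangent lifts. Lemma \ref{lem:cotangent_lifts} then makes the lifted isotopy transverse at every time.

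\textbf{Step 1: a smooth isotopy between any two embedded loops.} Fix embedded loops $\gamma_0,\gamma_1:S^1 \to S^2$. By the Jordan--Schoenflies theorem in the smooth category, each $\gamma_i$ bounds a smooth closed disk $D_i\subset S^2$; choosing which complementary disk to call $D_i$ lets us assume that $\gamma_i$ traverses $\partial D_i$ with its boundary orientation. By the disk theorem (Palais), there is an orientation-preserving diffeomorphism $\phi$ of $S^2$ with $\phi(D_0) = D_1$. Since $\on{Diff}^+(S^2)$ is connected (Smale), $\phi$ is isotopic to the identity. Composing $\gamma_0$ with this isotopy and then reparametrizing within $\partial D_1$ gives a smooth family of embeddings $\gamma_t$, $t\in[0,1]$, from $\gamma_0$ to $\gamma_1$. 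The reparametrization step is harmless: an orientation-preserving diffeomorphism of $S^1$ is isotopic to the identity, and reparametrization does not change the image of the cotangent lift.

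\textbf{Step 2: lift the family.} Each $\gamma_t$ is an immersion, so $\dot\gamma_t \neq 0$, and the tangent lifts $\hat\gamma_t$ depend smoothly on $t$. Because $F$ is smooth and strongly convex, the fiberwise Legendre transform $\mathcal L:S_FM\to S^*_FM$ is a smooth bundle diffeomorphism, so the cotangent lifts $\tilde\gamma_t = \mathcal L\circ\hat\gamma_t$ also form a smooth family. Each $\tilde\gamma_t$ is an embedding, because its projection to $S^2$ is $\gamma_t$, which is already injective. Finally, Lemma \ref{lem:cotangent_lifts} shows that each $\tilde\gamma_t$ is transverse to the contact structure. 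Together these give a transverse isotopy from $\tilde\gamma_0$ to $\tilde\gamma_1$, and by the isotopy extension theorem this can be realized by an ambient isotopy if desired.

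\textbf{Main obstacle: orientations.} The one point needing care is orientation, since a loop and its reverse have different cotangent lifts. Step 1 handles this: on $S^2$ either orientation of $\gamma$ is the boundary orientation of one of its two complementary disks. So no constraint arises, and the entire topological input reduces to connectedness of $\on{Diff}^+(S^2)$ together with the disk theorem.
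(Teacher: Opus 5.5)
Your proposal is correct and follows the same route as the paper. Both isotope the two embedded loops through embedded loops, take cotangent lifts along the isotopy, and invoke Lemma \ref{lem:cotangent_lifts} to get transversality at every time. Your Step 1 (smooth Schoenflies, Palais, Smale) and your observation that either orientation of an embedded loop in $S^2$ is the boundary orientation of one complementary disk supply details that the paper's two-line proof leaves implicit.
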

\begin{proof}
     Given any two embedded loops $\gamma_0$ and $\gamma_1$ on $S^2$, they are isotopic through embedded loops $\gamma_t$. The cotangent lifts $\tilde \gamma_t$ are all transverse to the contact distribution and therefore $\tilde \gamma_0$ and $\tilde \gamma_1$ are transversely isotopic.
\end{proof}

Cotangent lifts are independent of the metric, up to a transverse isotopy.
\begin{lemma}\label{lem:cot_lift_idep_F}
    Let $M$ be a two dimensional manifold with a Finsler metric $F$, and let   $\gamma_1,\dots,\gamma_n$ be a collection of oriented embedded  nowhere tangent curves in $M$.  The link formed by their cotangent lifts $L:=\tilde\gamma_1\cup\cdots\cup\tilde \gamma_n\subset S_F^*M$ with respect to a Finsler metric $F$, is independent of $F$ up to transverse isotopy.
\end{lemma}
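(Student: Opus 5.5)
The plan is to connect any two Finsler metrics by a path and follow the resulting family of links. Let $F_0$ and $F_1$ be Finsler metrics on $M$. Take $F_t = (1-t)F_0 + tF_1$ for $t \in [0,1]$. Each $F_t$ is smooth away from the zero section. It is also fiberwise strongly convex: by Lemma \ref{lem:smooth_convex_criterion}, the Hessian of $F_t^2$ equals
\[
(1-t)^2 D^2F_0^2 + t^2 D^2 F_1^2 + 2t(1-t)\big(F_0 D^2F_1 + F_1 D^2 F_0 + 2\,dF_0\otimes dF_1\big),
\]
and this is positive definite. To see this, rewrite the cross term as
\[
\tfrac12 D^2(F_0F_1)\cdot 2 = F_0D^2F_1+F_1D^2F_0+dF_0\otimes dF_1+dF_1\otimes dF_0 .
\]
The terms $F_0 D^2 F_1$ and $F_1 D^2 F_0$ are positive semidefinite, since the Hessian of a convex 1-homogeneous function is semidefinite. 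The remaining pieces combine, so the whole Hessian is $\geq$ a positive combination of $D^2F_0^2$, $D^2F_1^2$ and $(dF_0+dF_1)^{\otimes 2}$. Hence $F_t$ is a path of Finsler metrics. Alternatively, one can avoid the computation by interpolating the unit codisk bundles or using $\sqrt{(1-t)F_0^2+tF_1^2}$, whose square has Hessian a convex combination of positive definite forms; this second choice is the cleaner one and I would use it.

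Next, identify all the unit cosphere bundles $S^*_{F_t}M$ with one fixed contact manifold. Radial projection in the cotangent fibers gives a diffeomorphism $\Phi_t : S^*_{F_t}M \to S^*_{F_0}M$. It pulls the contact form back to a positive multiple of the tautological form: the tautological form $\lambda$ satisfies $\rho_s^*\lambda = s\lambda$ under fiberwise scaling $\rho_s$, so radial projection $p\mapsto p/F_0^*(p)$ sends $\lambda|_{S^*_{F_t}}$ to a conformal multiple. Thus $\Phi_t$ is a contactomorphism onto $(S^*_{F_0}M,\xi)$, and it depends smoothly on $t$.

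Now follow the curves. For each $t$, let $\tilde\gamma_i^t$ be the cotangent lift of $\gamma_i$ with respect to $F_t$. By Lemma \ref{lem:cotangent_lifts}, each $\tilde\gamma_i^t$ is transverse in $S^*_{F_t}M$. Therefore $L_t := \Phi_t(\bigcup_i \tilde\gamma_i^t)$ is a smooth family of transverse submanifolds of $S^*_{F_0}M$, since contactomorphisms preserve transversality. It remains to check that $L_t$ is an embedded link for every $t$. Each component is embedded because it projects to the embedded curve $\gamma_i$ under the bundle map, and $\Phi_t$ commutes with that projection. Two components $\Phi_t\tilde\gamma^t_i$ and $\Phi_t\tilde\gamma^t_j$ could only meet over an intersection point $x \in \gamma_i\cap\gamma_j$. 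At such a point the covectors are Legendre transforms of the directions $\dot\gamma_i/F_t$ and $\dot\gamma_j/F_t$. These directions are not positively proportional because the curves are nowhere tangent (reading "nowhere tangent" as excluding both parallel and antiparallel tangencies). Since the Legendre transform $\mathcal{L}_x$ is a bijection $S_{F_t}\to S^*_{F_t}$, the covectors are distinct, and radial projection keeps them distinct. So $L_t$ is a transverse isotopy from $L_0$ to $\Phi_1(L_1)$. Finally, the contactomorphism $\Phi_1$ gives the identification of $S^*_{F_1}M$ with $S^*_{F_0}M$, under which "the same link up to transverse isotopy" makes sense.

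I expect the main obstacle to be the bookkeeping of "independent of $F$". The ambient manifolds differ, so the statement has to be read via the canonical radial contactomorphisms, and one must check that these are compatible. The other delicate point is whether "nowhere tangent" excludes antiparallel tangencies. If it does not, opposite orientations at a tangency give distinct directions $v$ and $-v$, so the lifts still do not meet; so in either reading the components stay disjoint.
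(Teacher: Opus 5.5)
Your proposal is correct and follows the paper's proof: the paper uses exactly the interpolation $F_t=\sqrt{(1-t)F^2+tF'^2}$ and identifies the bundles $S^*_{F_t}M$ with $S^*_FM$ by fiberwise radial projection. You additionally supply details the paper leaves implicit, namely that radial projection is a contactomorphism because it commutes with the bundle projection, and that the lifted components stay embedded and pairwise disjoint. The one blemish is in the linear interpolation you discarded: there the claimed lower bound via $(dF_0+dF_1)^{\otimes 2}$ does not follow, since the cross term $4t(1-t)\,dF_0\, dF_1$ is indefinite (the correct grouping is $D^2F_t^2=2F_tD^2F_t+2\,dF_t\otimes dF_t$), but this plays no role because you use the square-root interpolation.
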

\begin{proof}
    Let $F'$ be another Finsler metric on $M$ and consider the isotopy of Finsler metrics
    \[
    F_t:= \sqrt{(1-t)F^2+t\|\cdot\|_{F'}^2}.
    \]
    The cotangent lifts of $\gamma_1,\dots,\gamma_n$ with respect to $F_t$ give a transverse isotopy between the $F$-lifts and the $F'$-lifts. Here we identify $S_{F_t}^*M$ with $S_{F}^*M$ via radial projection in each fiber. 
\end{proof}

To study transverse link types of $n$ minimizing Reeb orbits, it will be useful to understand isotopies without tangencies of certain loop configurations in $\RP^2$.
\begin{definition}[Isotopy Without Tangencies] Two collections of embedded, transversely intersecting loops $\{\gamma_i^0\}_i, \{\gamma_i^1\}_i$ are \emph{isotopic without tangencies} if there is a smooth isotopy through immersions
    \[
    \phi:[0,1]\times \cup_i S^1\rightarrow M, \quad \text{such that}\quad \phi_0(\cup_i S^1) = \{\gamma_i^0\}_i,\ \phi_1(\cup_i S^1) = \{\gamma_i^1\}_i 
    \]
    and for all $t$, all intersections (mutual or self) of the loops obtained by restricting  $\phi(t,-)$ to different $S^1$ are  transverse.
\end{definition}
\begin{remark} \label{rmk:lifting_isotopies}
    Note that if $\{\gamma_i^0\}_i, \{\gamma_i^1\}_i$ are isotopic without tangencies then their cotangent lifts $\{\tilde \gamma_i^0\}_i, \{\tilde\gamma_i^1\}_i$ are transversely isotopic links in $S^*_FM$. Moreover, isotopy without tangencies is independent of the orientation of the curves, and therefore the links obtained by cotangent lifts with respect to all possible orientations
    \[
    \tilde \gamma_1^j \cup -\tilde \gamma_1^j \cup\cdots\cup \tilde \gamma_n^j \cup -\tilde \gamma_n^j, \qquad j=0,1
    \]
    are also transversely isotopic.
\end{remark}
The following proposition is  \cite[Proposition 6.4]{ruberman2019topological}, stated in our notations and wording.
\begin{prop}[{\cite[Proposition 6.4]{ruberman2019topological}}]\label{prop:config_immersely_isotopic}
    Let $\gamma_1,\dots , \gamma_n$ be embedded loops in $\RP^2$, such that 
    \begin{equation}\label{eq:homotopy+intersection_condition}
        [\gamma_i]=[\RP^1]\text{ in }\pi_1(\RP^2) \quad \text{and}\quad  |\gamma_i\pitchfork \gamma_j| = 1 \quad \text{for all }i\neq j.
    \end{equation}
    Then, the union $\gamma_1\cup\cdots \cup \gamma_n$ is isotopic without tangencies to the union of $n$ great circles in $\RP^2$.
\end{prop}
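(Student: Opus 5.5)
The statement is \cite[Proposition 6.4]{ruberman2019topological}, so strictly one may cite it. If a self-contained argument is wanted, I would induct on $n$, isotoping the curves one at a time into great circles.

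\textbf{Base case and setup.} For $n=1$, an embedded loop $\gamma_1$ representing the generator of $\pi_1(\RP^2)$ is one-sided. A regular neighbourhood of it is a Möbius band $N$, and its complement is a disk. By the classification of such loops up to isotopy (Epstein), $\gamma_1$ is ambiently isotopic to $\RP^1$. Ambient isotopies by diffeomorphisms preserve transversality of intersections, so this is an isotopy without tangencies.

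\textbf{Inductive step.} Suppose $\gamma_1,\dots,\gamma_{n-1}$ have already been moved, by an ambient isotopy, onto great circles $\ell_1,\dots,\ell_{n-1}$. Applying the same ambient isotopy to $\gamma_n$ keeps it embedded, and it still meets each $\ell_i$ transversally in exactly one point. The lines $\ell_i$ cut $\RP^2$ into a cell complex with polygonal faces. The curve $\gamma_n$ is a sequence of arcs, each crossing one face between two of its edges. Because $\gamma_n$ meets each line exactly once, it crosses every line exactly once in total.

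Choose a great circle $\ell_n$ in general position with the same cyclic crossing order. Then isotope $\gamma_n$ face by face onto $\ell_n$, fixing the lines, and sliding each crossing point along its line. Inside each face the isotopy is a disk-isotopy of an embedded arc with endpoints on distinct edges, so it stays embedded and transverse to the boundary.

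\textbf{Main obstacle.} The hard step is the combinatorial claim that some great-circle configuration realizes the same cyclic crossing order. The issue is that $n$ pseudolines need not be stretchable. A curve crossing each line once forms, together with the $\ell_i$, an arrangement of pseudolines, and non-stretchable pseudoline arrangements exist for $n\ge 9$ (the Pappus-type examples). So the face-by-face isotopy fixing $\ell_1,\dots,\ell_{n-1}$ can fail.

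The fix is to use the weaker relation "isotopic without tangencies". It allows triple points to pass through one another: a Reidemeister III-type move keeps all pairwise intersections transverse. Using such moves, every arrangement of pseudolines can be reduced to any other with $n$ elements (Ringel's theorem on the connectivity of pseudoline arrangements under triangle flips). So I would first show that the curves form a pseudoline arrangement, then apply Ringel's flips realized as smooth moves through triple points, and end at a stretchable arrangement of great circles. Making these flips smooth and transverse is the delicate part.

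Alternatively one can simply quote the result, as the paper does.
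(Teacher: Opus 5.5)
The paper gives no proof of its own. It cites \cite[Proposition 6.4]{ruberman2019topological} and adds only the heuristic that an unavoidable tangency would force an algebraically cancelling pair of intersection points. So the question is whether your self-contained route works.

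\textbf{The induction.} It does not work, and for a more basic reason than the one you give. Its hypothesis is that $\gamma_1,\dots,\gamma_{n-1}$ can be carried onto great circles by an \emph{ambient} isotopy. That is stronger than the proposition itself, which only yields isotopy without tangencies, and it is false in general. An ambient isotopy preserves the cell decomposition of $\mathbb{RP}^2$ cut out by the curves. So it reaches great circles only if $\gamma_1,\dots,\gamma_{n-1}$ is already a stretchable arrangement, which can fail once $n-1\ge 9$. Your face-by-face step also preserves the combinatorial type, so the whole scheme could only ever handle stretchable arrangements. It should be discarded, not patched at the choice of $\ell_n$.

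\textbf{The Ringel route.} This is the viable argument, and it genuinely differs from the paper's heuristic. The curves pass only through triple points, never through tangencies, and they remain a pseudoline arrangement throughout. (That they form one initially is simply the hypothesis.) As written, though, it omits two steps, while the step you call delicate is routine.

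First, the hypothesis allows points where three or more $\gamma_i$ meet. In the application to systolic geodesics these genuinely occur, e.g.\ for Zoll metrics. Ringel's theorem, however, concerns simple arrangements, so you must resolve such points first. The tangent directions at such a point are pairwise distinct. After a $C^1$-small local straightening you can translate the segments into general position. In a small disk, curves whose directions stay near distinct fixed directions meet at most once, and transversally.

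Second, Ringel's theorem connects simple arrangements only up to isomorphism of their cell decompositions. You therefore need that two smooth simple arrangements with isomorphic cell decompositions are ambiently isotopic. To get this, build a diffeomorphism realizing the isomorphism cell by cell: vertex neighborhoods, then edges, then the $2$-cells, which are disks. Then use that $\mathrm{Diff}(\mathbb{RP}^2)$ is connected. Take as target a generic, hence simple, arrangement of $n$ great circles.

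The flips themselves are local. A triangular face of a simple arrangement has a neighborhood meeting no curves other than the three that bound it. There the move is modeled on translating one straight segment across the crossing of the other two. The three directions stay distinct throughout, so every intersection remains transverse.
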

Roughly speaking, the idea is that if a tangency in the isotopy cannot be avoided by perturbations, then the curves must have at least 3 intersection points (of which two form an algebraically canceling pair, that will lead to the tangency).

\begin{cor}\label{cor:loops_in_RP2_are_great_circles}
    Let $\gamma_1,\dots , \gamma_n$ be embedded loops in $\RP^2$ satisfying (\ref{eq:homotopy+intersection_condition}).
    Let $\eta_i$ be the lift of $\gamma_i$ to $S^2$. Then the union of their cotangent lifts $\cup_i \tilde \eta_i$ is transversely isotopic in $S^*_F S^2$ to a union of cotangent lifts of great circles.
\end{cor}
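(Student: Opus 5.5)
The plan is to lift the isotopy produced by Proposition \ref{prop:config_immersely_isotopic} from $\RP^2$ to $S^2$, and then lift it again to $S^*_FS^2$ using Remark \ref{rmk:lifting_isotopies}.

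First, apply Proposition \ref{prop:config_immersely_isotopic} to $\gamma_1,\dots,\gamma_n$. This gives an isotopy without tangencies $\phi:[0,1]\times \bigcup_i S^1 \to \RP^2$ from $\bigcup_i\gamma_i$ to a union of $n$ projective lines $\ell_1,\dots,\ell_n$. Each $\gamma_i$ represents the nontrivial class in $\pi_1(\RP^2)$, so its preimage under the double cover $p:S^2\to\RP^2$ is a single connected embedded loop $\eta_i$ that double covers $\gamma_i$. The preimage of $\ell_i$ is a great circle $c_i$.

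Next, lift the isotopy. The homotopy class of the loop $\phi(t,-)|_{S^1_i}$ is constant in $t$, so it stays nontrivial. Concretely, $\phi_i(t,s)$ for $s\in\R/\Z$ lifts to a map $\tilde\phi_i(t,s)$ defined for $s\in\R/2\Z$, obtained by lifting the homotopy of the doubled loop. Because $p$ is a local diffeomorphism, $\tilde\phi_t$ is an immersion for every $t$.

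The step that needs care is checking that intersections upstairs remain transverse throughout. Any intersection or self-intersection point of the lifted curves at time $t$ projects to an intersection point of the curves downstairs at time $t$. Any two branches through that point project to two branches downstairs, and these are either distinct branches or the same branch traversed at two different parameter times. In the first case the downstairs branches meet transversely, and $p$ is a local diffeomorphism, so the branches upstairs meet transversely too. The case to rule out is that the two upstairs branches are the two deck-translates of a single downstairs branch. These translates lie over the same parameter value $s$ but at antipodal points of $S^2$, so they never meet. Hence the lifted family is an isotopy without tangencies in $S^2$ from $\bigcup_i\eta_i$ to $\bigcup_i c_i$. Each $\eta_i$ is embedded at $t=0$, so the lifted curves are embedded loops at the endpoints.

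Finally, orient each $\eta_i$, and orient the great circles compatibly along the isotopy. By Remark \ref{rmk:lifting_isotopies}, applied with $M=S^2$ and the metric $F$, the cotangent lifts $\bigcup_i\tilde\eta_i$ are transversely isotopic in $S^*_FS^2$ to $\bigcup_i\tilde c_i$. By Lemma \ref{lem:cot_lift_idep_F}, the choice of metric used for the lifts does not matter up to transverse isotopy. This gives the corollary.
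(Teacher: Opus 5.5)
Your proposal is correct and follows the paper's argument: apply Proposition~\ref{prop:config_immersely_isotopic}, lift the isotopy without tangencies from $\RP^2$ to $S^2$, and then use Remark~\ref{rmk:lifting_isotopies} to obtain a transverse isotopy of the cotangent lifts. You also spell out the lifting step that the paper leaves implicit, namely that the two deck-translates of one downstairs branch lie at antipodal points and so never create a new intersection upstairs; your final appeal to Lemma~\ref{lem:cot_lift_idep_F} is harmless but unnecessary, since the remark already works directly with the $F$-cotangent lifts in $S^*_FS^2$.
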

\begin{proof}
    By Proposition~\ref{prop:config_immersely_isotopic}, $\{\gamma_i\}$ are isotopic without tangencies to a union of great circles in $\RP^2$. Lifting this isotopy to $S^2$, the lifts $\{\eta_i\}$ are isotopic without tangencies to a union of great circles in $S^2$. Since isotopy without tangencies lifts to a transverse isotopy of the cotangent lifts, we see that the union of $\{\tilde \eta_i\}$ is transversely isotopic to the union of cotangent lifts of great circles.
\end{proof}

\section{Systolic Geometry} \label{sec:systolic_geometry} 

In this section, we review various background results in systolic geometry for Riemannian metrics and Finsler metrics.  These results will serve as key tools in the applications discussed in the introduction.


\subsection{Metric Systoles} We start by defining the metric systole, which is the most elementary invariant in systolic geometry.

\begin{definition}[Metric Systole] The \emph{metric systole} of a Finsler manifold  $(M,F)$ is the infimum over all non-contractible rectifiable loops in $M$ of the length of that loop.
\[
\on{sys}(M,F) := \on{inf}\big\{\on{length}_F(\Gamma) \; : \; \text{rectifiable non-contractible loops }\Gamma \subset M\big\}
\]
\end{definition}

The following scaling and monotonicity properties of the metric systole follow immediately from the analogous properties of the length.

\begin{lemma}[Systole Scaling] \label{lem:sys_scaling} The metric systole satisfies the following property with respect to scaling.
\[
\on{sys}(M,\lambda F) = \lambda \cdot \on{sys}(M,F) \qquad\text{for any constant $\lambda > 0$.}
\]
\end{lemma}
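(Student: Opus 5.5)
The plan is to reduce the statement to the corresponding scaling property of the length functional and then pass it through the infimum in the definition of the metric systole. First, I check that the class of admissible loops does not depend on the metric. A loop $\Gamma \subset M$ is non-contractible by a purely topological condition. Rectifiability is also unchanged under scaling: the length of a curve with respect to $\lambda F$ is a supremum or integral of $\lambda F$ applied to (approximate) velocities, so a curve has finite $F$-length if and only if it has finite $\lambda F$-length. Hence
\[
\big\{\text{rectifiable non-contractible loops for } \lambda F\big\} = \big\{\text{rectifiable non-contractible loops for } F\big\}.
\]

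Second, I verify the identity
\[
\on{length}_{\lambda F}(\Gamma) = \lambda \cdot \on{length}_F(\Gamma)
\]
for every such loop. For a piecewise $C^1$ parametrization $\gamma$, this is immediate from
\[
\int (\lambda F)_{\gamma(t)}(\dot\gamma(t))\,dt = \lambda \int F_{\gamma(t)}(\dot\gamma(t))\,dt.
\]
For a general rectifiable loop, length is defined through the induced (asymmetric) distance $d_F$ as a supremum of sums over partitions. The distance scales as $d_{\lambda F} = \lambda d_F$, since it is an infimum of lengths of piecewise smooth paths and $\lambda > 0$ factors out of an infimum. The same positive factor then pulls out of the supremum over partitions.

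Finally, I take the infimum over the common admissible class. Since multiplication by a fixed $\lambda > 0$ commutes with infima of sets of non-negative reals, I obtain
\[
\on{sys}(M,\lambda F) = \inf_\Gamma \lambda\, \on{length}_F(\Gamma) = \lambda\, \on{sys}(M,F).
\]
The only step needing any care is the length identity for merely rectifiable (non-smooth) loops. Even there it reduces to the positive homogeneity of infima and suprema, so I expect no genuine obstacle.
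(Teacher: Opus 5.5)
Your proof is correct and follows the same route as the paper, which simply states that this follows immediately from the scaling identity $\mathrm{length}_{\lambda F} = \lambda \cdot \mathrm{length}_F$, passed through the infimum over non-contractible loops. Your additional points, that the admissible class of loops does not depend on $\lambda$ and that the length identity also holds for merely rectifiable loops, are accurate details the paper leaves implicit.
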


\begin{lemma}[Systole Monotonicity] \label{lem:sys_monotonicity}  The metric systole is monotonic in the sense that
\[
\on{sys}(M,F) \le \on{sys}(M,F') \qquad\text{if $F$ and $F'$ are Finsler metrics on $M$ with $F \le F'$.}
\]
\end{lemma}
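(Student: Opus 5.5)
The plan is to argue directly from the definition of the metric systole as an infimum of lengths. The key observation is that the pointwise inequality $F \le F'$ passes through the integral defining length. Here $\on{length}_F(\Gamma) = \int F_{\Gamma(t)}(\dot\Gamma(t))\,dt$ for a Lipschitz (or rectifiable) parametrization, so every loop is at least as long for $F'$ as for $F$. Taking the infimum over the same class of loops then gives the inequality.

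First, I will fix a non-contractible rectifiable loop $\Gamma \subset M$. Rectifiability is a metric notion, but on a compact manifold any two continuous Finsler metrics are bi-Lipschitz comparable, so the class of rectifiable loops is the same for $F$ and $F'$. Hence both infima in the definition of $\on{sys}$ range over the same set of non-contractible loops. This is the only point requiring any care. If $M$ is noncompact, I would instead note that comparability holds on compact sets containing each loop, which suffices because each loop has compact image.

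Second, I will choose a parametrization of $\Gamma$ that is Lipschitz in local charts, so that $\dot\Gamma$ exists almost everywhere. The inequality $F_{\Gamma(t)}(\dot\Gamma(t)) \le F'_{\Gamma(t)}(\dot\Gamma(t))$ then holds for almost every $t$. Integrating gives $\on{length}_F(\Gamma) \le \on{length}_{F'}(\Gamma)$.

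Finally, I will observe that
\[
\on{sys}(M,F) \le \on{length}_F(\Gamma) \le \on{length}_{F'}(\Gamma)
\]
holds for every admissible $\Gamma$. Taking the infimum over $\Gamma$ on the right-hand side yields $\on{sys}(M,F) \le \on{sys}(M,F')$. I do not expect any real obstacle: the argument is the same monotonicity-of-length reasoning that underlies Lemma \ref{lem:sys_scaling}.
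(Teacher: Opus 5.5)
Your proposal is correct and matches the paper, which simply says that monotonicity of the systole follows immediately from the pointwise monotonicity of length, passed through the infimum. Your care about the class of rectifiable loops is harmless, and you could even skip the compactness comparison: since $F \le F'$, every $F'$-rectifiable loop is already $F$-rectifiable, so the infimum defining $\on{sys}(M,F)$ runs over a class at least as large as the one for $F'$.
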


\noindent As a consequence of these results, the metric systole is continuous in the Lipschitz topology. 

\begin{lemma}[Systole Continuity] \label{lem:sys_continuity} The metric systole is continuous in the Lipschitz topology on Finsler metrics on any compact manifold.
\end{lemma}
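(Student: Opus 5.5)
We have to prove Lemma \ref{lem:sys_continuity}: the metric systole is continuous in the Lipschitz topology on Finsler metrics on a compact manifold. The plan is to deduce this from Lemma \ref{lem:sys_scaling} and Lemma \ref{lem:sys_monotonicity} via a two-sided sandwich. The needed convergence notion is that $F_k \to F$ implies the ratios $F_k/F$ tend to $1$ uniformly on $TM\setminus 0$. Concretely, for every $\delta > 0$ and all large $k$ we want
\[
(1-\delta) F \le F_k \le (1+\delta) F \qquad\text{on } TM\setminus 0.
\]

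The first step is to establish this sandwich. By $1$-homogeneity it suffices to compare $F_k$ and $F$ on the unit sphere bundle $S_F M = \{F = 1\}$. This set is compact because $M$ is compact and $F$ is positive away from the zero section. On $S_F M$ we have $F \equiv 1$, so uniform (in particular Lipschitz) convergence $F_k \to F$ there gives $|F_k - 1| \le \delta$ for large $k$. Rescaling then yields the sandwich on all of $TM\setminus 0$.

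The second step applies Lemma \ref{lem:sys_monotonicity} and then Lemma \ref{lem:sys_scaling}. Monotonicity gives
\[
\on{sys}(M,(1-\delta)F) \le \on{sys}(M,F_k) \le \on{sys}(M,(1+\delta)F),
\]
and scaling turns this into
\[
(1-\delta)\on{sys}(M,F) \le \on{sys}(M,F_k) \le (1+\delta)\on{sys}(M,F).
\]
If $\on{sys}(M,F)$ is finite, letting $\delta \to 0$ gives $\on{sys}(M,F_k) \to \on{sys}(M,F)$. If $\pi_1(M)$ is trivial, both systoles equal $+\infty$ by the convention $\inf\emptyset = \infty$, and the statement is vacuous.

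The only real subtlety is the first step: checking that the Lipschitz topology controls the ratio $F_k/F$ uniformly. This depends on compactness of $M$ and on homogeneity, which is why it is restricted to the unit sphere bundle. The metrics need not be strongly convex for this argument, since monotonicity and scaling only use the definition of length, so the argument applies equally to Lipschitz Finsler metrics.
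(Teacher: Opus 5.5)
Your proposal is correct and follows the paper's proof: the paper also uses the sandwich $(1-\epsilon_i)F \le F_i \le (1+\epsilon_i)F$ and then applies Lemma \ref{lem:sys_monotonicity} and Lemma \ref{lem:sys_scaling}. The only difference is that the paper takes this multiplicative sandwich directly from the definition of the Lipschitz topology. You instead derive it from uniform convergence on the compact unit sphere bundle using homogeneity, which is a harmless and slightly more general addition.
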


\begin{proof} Suppose that $F_i$ is a sequence of Finsler metrics converging to a Finsler metric $F$ in the Lipschitz topology. By definition of the topology, there is a sequence $\epsilon_i$ with $\epsilon_i \to 0$ such that
\[
(1 - \epsilon_i) \cdot F\le F_i \le (1+\epsilon_i) \cdot F
\]
It then follows from Lemma \ref{lem:sys_scaling} and Lemma \ref{lem:sys_monotonicity} that
\[
(1 - \epsilon_i) \cdot \on{sys}(M,F) \le \on{sys}(M,F_i)\le (1 + \epsilon_i) \cdot \on{sys}(M,F)
\]
Therefore the metric systoles of $F_i$ converge to the metric systole of $F$.  \end{proof}

\noindent Next, it is a standard result that there is a closed loop that has length equal to the metric systole, which is necessarily a geodesic (cf. \cite[Thm 8.7.1]{BaoChernShen2000}). We refer to such a loop as a \emph{systolic geodesic}. 

\begin{thm}[Systole Spectrality] \label{thm:sys_spectral}  For any Finsler metric $F$ on a closed manifold $M$ with positive finite systole, there exists a closed non-contractible geodesic $\Gamma$ on $M$ with respect to $F$ such that
\[\on{length}_F(\Gamma) = \on{sys}(M,F)\]
\end{thm}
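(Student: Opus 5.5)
The plan is the direct method in the calculus of variations: take a minimizing sequence of non-contractible loops, replace it by a sequence of uniformly Lipschitz loops of bounded length, extract a uniformly convergent subsequence with Arzel\`a--Ascoli, and show the limit is a non-contractible loop realizing the systole. Since the systole is finite and positive, fix non-contractible rectifiable loops $\Gamma_k$ with $\on{length}_F(\Gamma_k) \to \on{sys}(M,F)$.

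\textbf{Compactness.} Reparametrize each $\Gamma_k$ proportionally to $F$-arclength on $S^1 = \R/\Z$. Compactness of $M$ and continuity of $F$ give constants $c, C > 0$ with $c\,|v| \le F(v) \le C\,|v|$ for an auxiliary Riemannian norm $|\cdot|$. Hence the $\Gamma_k$ are uniformly Lipschitz with respect to the Riemannian distance, and Arzel\`a--Ascoli yields a subsequence converging uniformly to a Lipschitz loop $\Gamma$.

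\textbf{Properties of the limit.} Non-contractibility passes to the limit: for large $k$, $\Gamma_k$ and $\Gamma$ are uniformly close, so they are joined by the homotopy along short Riemannian geodesics (taking the distance below the injectivity radius of the auxiliary metric). Length is lower semicontinuous under uniform convergence, because $F$ is a continuous fiberwise convex integrand, and length is the supremum over partitions of sums of $F$-distances between successive points. Together these give $\on{length}_F(\Gamma) \le \on{sys}(M,F)$, and the definition of the systole gives the reverse inequality, so equality holds.

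\textbf{Regularity.} It remains to show that $\Gamma$ is a smooth closed geodesic. Any short subarc of $\Gamma$ must minimize length between its endpoints, since otherwise replacing it by a shorter curve with the same endpoints, which is homotopic rel endpoints when both are small, would produce a shorter non-contractible loop. Local minimizers of Finsler length are geodesics, by the standard existence and uniqueness of minimizing geodesics in small strongly convex neighborhoods. Therefore $\Gamma$ is a geodesic on each short arc, and these arcs match smoothly, including at the basepoint.

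\textbf{Main obstacle.} I expect the regularity step to be the hardest. It requires convex normal neighborhoods for non-reversible Finsler metrics. I would take these from \cite{BaoChernShen2000}, which is the reference the paper itself cites for this statement, rather than reprove them.
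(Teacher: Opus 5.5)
Your argument is correct and essentially self-contained, whereas the paper gives no proof. It only quotes the statement as standard, with a pointer to Bao--Chern--Shen, whose relevant result concerns shortest closed geodesics within a given free homotopy class. The classical proof of that fact replaces a minimizing sequence by geodesic polygons with a fixed number $N$ of vertices, extracts a limit using compactness of $M^N$, and then observes that a shortest closed curve cannot have corners.

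Your route differs in two useful ways. First, you minimize over all non-contractible loops at once and check that non-contractibility survives the uniform limit. This matches the paper's definition of $\on{sys}(M,F)$ directly. The class-by-class statement needs an extra (easy) remark to see that the infimum over all non-contractible loops is attained, e.g.\ that on a compact manifold only finitely many free homotopy classes contain loops of length below a given bound.

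Second, because your limit is only Lipschitz, you need two inputs that the polygon method gets for free: lower semicontinuity of $F$-length under uniform convergence, and the fact that a Lipschitz arc minimizing length between nearby points is a monotone reparametrization of the minimizing geodesic. Both are standard. The second needs no Gauss lemma for non-smooth curves: every intermediate point $\Gamma(t)$ of such an arc satisfies $d_F(\Gamma(a),\Gamma(t)) + d_F(\Gamma(t),\Gamma(b)) = d_F(\Gamma(a),\Gamma(b))$. Concatenating minimizing geodesics through $\Gamma(t)$ therefore gives a piecewise smooth minimizer, which by local uniqueness is the geodesic from $\Gamma(a)$ to $\Gamma(b)$.
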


\noindent For our purposes, it will be important that systolic geodesics are embedded.

\begin{lemma}[Embedded Systoles] \label{lem:embedded_systoles} Any systolic geodesic $\Gamma$ of a Finsler metric $F$ is embedded.
\end{lemma}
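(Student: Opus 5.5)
The plan is to argue by contradiction with a curve-shortening construction at a self-intersection. Suppose the systolic geodesic $\Gamma$, parametrized by arc length on $[0,\ell]$ with $\ell = \on{sys}(M,F)$, is not embedded. Then there are parameters $0 \le a < b < \ell$ with $\Gamma(a) = \Gamma(b) = x$. Since $\Gamma$ is a geodesic of a Finsler metric and geodesics are determined by their initial unit vector, the velocities $\dot\Gamma(a)$ and $\dot\Gamma(b)$ are distinct. Otherwise $\Gamma$ would be periodic with period $b-a < \ell$, so $\Gamma$ would be a multiple cover of a shorter closed geodesic.

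In the multiple-cover case, suppose $\Gamma$ is the $k$-fold iterate of a loop $\Gamma_0$ with $k \ge 2$. If $\Gamma_0$ is non-contractible, its length $\ell/k < \ell$ contradicts minimality. If $\Gamma_0$ is contractible, then every iterate of it is contractible, so $\Gamma$ itself would be contractible, which is again a contradiction. This disposes of the multiple-cover case.

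In the transverse case, I split $\Gamma$ at $x$ into two loops: $\Gamma_1 = \Gamma|_{[a,b]}$ and $\Gamma_2 = \Gamma|_{[b,\ell]}\cup\Gamma|_{[0,a]}$. Their lengths sum to $\ell$ and each is positive. In $\pi_1(M,x)$ we have $[\Gamma] = [\Gamma_1][\Gamma_2]$, up to conjugation and basepoint change, so at least one of $\Gamma_1$ or $\Gamma_2$ is non-contractible. That loop is a rectifiable non-contractible loop of length $\le \ell$. It cannot have length strictly less than $\ell$, so the other loop would have to have length zero, which is impossible. Hence the non-contractible piece has length strictly less than $\ell$, contradicting the definition of $\on{sys}(M,F)$.

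Because the length of each piece is positive, we never need the corner-smoothing step. Splitting alone gives a strictly shorter non-contractible loop, and rectifiability suffices for the definition of the systole. Even so, I would add the remark that one can also round the corner at $x$ to strictly decrease length, using strong convexity of $F$ (Lemma~\ref{lem:smooth_convex_criterion} gives strict convexity of the unit ball, hence strict triangle inequality for distinct directions). This would show that the shorter piece can be taken smooth if needed.

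The main subtle point is the uniqueness of geodesics through a given unit vector, which requires the smoothness and strong convexity in Definition~\ref{def:finsler_metric}. This is what rules out the case where $\Gamma$ revisits $x$ tangentially without being a multiple cover.
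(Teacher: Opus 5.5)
Your proof is correct and is essentially the paper's argument: split $\Gamma$ at a self-intersection point into two loops of positive length, observe that one of them must be non-contractible, and obtain a non-contractible loop strictly shorter than $\on{sys}(M,F)$. Your separate multiple-cover case and the appeal to uniqueness of geodesics (and hence to strong convexity) are not needed. The splitting argument never uses $\dot\Gamma(a) \neq \dot\Gamma(b)$, so it already covers multiple covers, as the paper's proof does implicitly.
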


\begin{proof} Let $\Gamma$ be a systolic geodesic. Suppose that $\Gamma$ intersects itself at a point $x$. Then we can decompose $\Gamma$ as a path composition $\Gamma_1 \star \Gamma_2$ where $\Gamma_1$ and $\Gamma_2$ are closed loops based at $x$. The length of both curves is less than the length of $\Gamma$, and one of these loops must be non-contractible since $\Gamma$ is non-contractible. It follows that there is a non-contractible loop of smaller length than $\Gamma$, which is a contradiction. Therefore $\Gamma$ is embedded.\end{proof}

\subsection{Systoles On Projective Planes} In this paper, a central role will be played by systolic inequalities that relate the systole and volume for Finsler metrics on the projective plane. In the reversible case, there is a sharp systolic inequality generalizing the Pu inequality \cite{pu1952some} due to Ivanov \cite{ivanov2002two}. 

\begin{thm}[Ivanov-Pu, \cite{ivanov2002two}] \label{thm:ivanov_pu} Let $F$ be a continuous reversible Finsler metric on $\mathbb{RP}^2$. Then
\begin{equation} \label{eq:ivanov_pu}
\on{sys}(\mathbb{RP}^2,F)^2 \le \frac{1}{2} \cdot \volHT(\mathbb{RP}^2,F)
\end{equation}
Moreover, a strongly convex and smooth Finsler metric $F$ achieves equality if and only if $F$ is Zoll. \end{thm}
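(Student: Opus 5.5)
The plan is to reduce the statement to a sharp filling-area inequality for Finsler $2$-disks, which is Ivanov's own route. The normalization already matches this reduction. The round hemisphere has boundary length $2\pi$ and Holmes-Thompson area $2\pi^2$. The round $\mathbb{RP}^2$ has systole $\pi$ and Holmes-Thompson volume $2\pi^2$. So the target inequality \eqref{eq:ivanov_pu}, $\volHT \ge 2\,\on{sys}^2$, should follow from this filling statement:

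\begin{quote}
\emph{Filling inequality.} A continuous reversible Finsler metric on the disk $D$ whose boundary circle has length $2L$, and whose boundary distances are not shortened (i.e. $d_D(x,y) \ge d_{\partial D}(x,y)$), has $\volHT(D) \ge 2\pi^2 (2L/2\pi)^2 = 2L^2$.
\end{quote}

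We take this Finsler filling theorem for the hemisphere as the main external input. It is proved by Ivanov by approximating by smooth metrics and using a Crofton/integral-geometric argument on the space of boundary chords.

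\textbf{Step 1: the cut disk.} First approximate $F$ by smooth strongly convex reversible metrics. Lemma \ref{lem:sys_continuity} and continuity of $\volHT$ make this harmless for the inequality. By Theorem \ref{thm:sys_spectral} and Lemma \ref{lem:embedded_systoles}, pick an embedded systolic geodesic $\gamma$ of length $L = \on{sys}(\mathbb{RP}^2,F)$. It is one-sided, since it is non-contractible in $\mathbb{RP}^2$. Cutting $\mathbb{RP}^2$ along $\gamma$ therefore gives a disk $D$ with $\volHT(D) = \volHT(\mathbb{RP}^2,F)$. The boundary $\partial D$ double-covers $\gamma$ and has length $2L$.

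\textbf{Step 2: boundary distances are not shortened.} This is the step I expect to be the main obstacle. Take two points $p,q \in \partial D$ and a path $c$ in $D$ joining them. Project $c$ to $\mathbb{RP}^2$, and close it up with each of the two arcs of $\gamma$ between the images of $p$ and $q$. Exactly one of the two resulting loops is non-contractible. Which one it is depends on whether $p,q$ lie on the same lift or on opposite lifts of $\gamma$ in $\partial D$. By systolicity, that loop has length at least $L$.

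The delicate part is the bookkeeping. One must check that in every case the non-contractible closure uses the $\gamma$-arc complementary to the boundary arc realizing $d_{\partial D}(p,q)$. That yields $\on{length}(c) \ge L - (L - d_{\partial D}(p,q)) = d_{\partial D}(p,q)$. I would organize this by parametrizing $\partial D$ by $[0,2L)$, with antipodal parameters $s$ and $s+L$ mapping to the same point of $\gamma$. The orientation double cover then identifies the two cases with the parity of the crossing with $\gamma$. If a case fails directly, I would fall back on lifting to $S^2$. There the antipodal-invariant metric satisfies $d(x,-x) \ge L$, and one can run the same argument with the great-circle lift of $\gamma$.

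\textbf{Step 3: the inequality.} Applying the filling inequality to $D$ gives $\volHT(\mathbb{RP}^2,F) = \volHT(D) \ge 2L^2$, which is \eqref{eq:ivanov_pu}.

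\textbf{Step 4: the equality case.} For smooth strongly convex $F$, equality forces equality in the filling inequality for every systolic geodesic $\gamma$. In Ivanov's integral-geometric proof, equality means that every boundary chord of $D$ is realized by a minimizing geodesic of length exactly $d_{\partial D}$. Translated back to $\mathbb{RP}^2$, every geodesic closes up with length $L$, i.e. $F$ is Zoll. For the converse, a Zoll reversible metric on $\mathbb{RP}^2$ has all geodesics closed of common length $L$. Its unit cosphere bundle then fibers over the space of geodesics, and a Weinstein/Santal\'o-type computation of the symplectic volume gives $\volHT = 2L^2$.
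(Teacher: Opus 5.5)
The paper does not prove this theorem; it only cites Ivanov. Your Steps 1--3 correctly reconstruct the standard reduction of the inequality to Ivanov's Finsler filling theorem for the hemisphere.

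The bookkeeping you worry about in Step 2 is harmless. Work in $S^2$ with $p=\tilde\gamma(s)$, $q=\tilde\gamma(t)$ and $0\le t-s\le L$. Follow $c$ by the arc of $\tilde\gamma$ from $q$ to $\tilde\gamma(s+L)=-p$. This joins $p$ to $-p$, so it projects to a non-contractible loop, and hence $\on{length}(c)\ge L-(L-(t-s))=d_{\partial D}(p,q)$.

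The converse half of Step 4 is also fine: a Zoll metric gives a circle fibration of $S^*\RP^2$ over $S^2$ with Euler number $\pm 4$, so the contact volume is $4L^2$ and $\volHT=2L^2$.

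The genuine gap is the direction ``equality implies Zoll''. The property you say equality forces is empty. In \emph{every} cut disk, $d_D(p,q)\le d_{\partial D}(p,q)$ holds trivially, because the boundary arc (a piece of the geodesic $\gamma$) is an admissible path. Step 2 gives the reverse inequality. So every pair of boundary points is always joined by a minimizing geodesic of length $d_{\partial D}(p,q)$, namely a boundary arc, whether or not equality holds. If you instead mean interior geodesics, the property fails for the model itself: in the round hemisphere, two non-antipodal equator points are joined by no interior geodesic at all.

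What equality would have to force is a rigidity statement of a different kind. For example, every geodesic entering the interior of $D$ from the boundary point with parameter $s$ should exit at the point with parameter $s+L$ after length exactly $L$. Such rigidity cannot be extracted from the filling inequality used as a black box. It is exactly the nontrivial content of the equality case, so you must either cite it or reprove it by opening up Ivanov's argument.

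Even granting it, your sentence ``translated back to $\RP^2$, every geodesic closes up with length $L$'' skips two points:
\begin{itemize}
\item Why the resulting geodesic loops through $\gamma$ close up smoothly. They do, because they are non-contractible of length $L=\on{sys}$, hence length-minimizing in their free homotopy class, hence smooth closed geodesics.
\item Why geodesics that never meet $\gamma$ are also closed of length $L$. This needs an argument, for instance an open--closed argument in the unit tangent bundle using Lemma \ref{lem:systolic_intersections}.
\end{itemize}
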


\noindent In the case of non-reversible Finsler metrics, a non-optimal systolic inequality can be derived from the Ivanov-Pu inequality via the following symmetrization construction.

\begin{lemma}[Reversibilization Systole] \label{lem:rev_systole} The systole of the reversibilization of a Finsler metric $F$ on a manifold $M$ satisfies the following inequality
\[
\on{sys}(M,F) \le \on{sys}(M,\bar{F})
\]\end{lemma}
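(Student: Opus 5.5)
The plan is to compare lengths loop by loop, using that reversing a loop does not change whether it is contractible. Fix a rectifiable non-contractible loop $\Gamma:[0,1]\to M$ and let $\Gamma^{-}(t) = \Gamma(1-t)$ be the reversed loop.

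\textbf{Length splitting.} First I will check that, directly from the definition of $\bar{F}$ and linearity of the length integral, we have
\[
\on{length}_{\bar F}(\Gamma) = \tfrac{1}{2}\int_0^1 \big(F_{\Gamma}(\dot\Gamma) + F_{\Gamma}(-\dot\Gamma)\big)\,dt = \tfrac{1}{2}\big(\on{length}_F(\Gamma) + \on{length}_F(\Gamma^{-})\big).
\]
The second term is rewritten using the change of variables $t \mapsto 1-t$. For merely rectifiable loops I would either approximate by piecewise $C^1$ loops, or interpret length as the supremum over partitions; in both settings the identity holds by the same termwise argument.

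\textbf{Comparison with the systole of $F$.} The reversed loop $\Gamma^{-}$ represents the inverse free homotopy class of $\Gamma$, so it is also non-contractible. Hence both $\on{length}_F(\Gamma)$ and $\on{length}_F(\Gamma^{-})$ are at least $\on{sys}(M,F)$. Their average $\on{length}_{\bar F}(\Gamma)$ is therefore also at least $\on{sys}(M,F)$. Taking the infimum over all rectifiable non-contractible $\Gamma$ gives $\on{sys}(M,F) \le \on{sys}(M,\bar F)$.

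\textbf{Main point to check.} There is no real obstacle here; the only care needed is the regularity bookkeeping in the length identity. Note also that $\bar F$ is a continuous reversible norm on each fiber, though not necessarily smooth. This does not matter, because the systole is defined as an infimum over rectifiable loops and so makes sense for continuous metrics.
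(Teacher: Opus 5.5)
Your proposal is correct and is essentially the paper's argument: the paper also writes $2\,\on{sys}(M,F) \le \on{length}_F(\Gamma) + \on{length}_F(-\Gamma) = 2\,\on{length}_{\bar F}(\Gamma)$ for each non-contractible loop $\Gamma$ and then takes the infimum. One small caveat on your regularity remark: in the partition definition of length the identity is not literally termwise, since $d_{\bar F}(x,y)$ can exceed $\tfrac12\big(d_F(x,y)+d_F(y,x)\big)$; however, only the inequality $\ge$ is needed, and that inequality does hold termwise and passes to the supremum over partitions.
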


\begin{proof} Note that if $\Gamma$ is any non-contractible loop and $-\Gamma$ is the same curve with the orientation reversed, then
\[
2 \cdot \on{sys}(M,F) \le \on{length}_F(\Gamma) + \on{length}_F(-\Gamma) = 2 \cdot \on{length}_{\bar{F}}(\Gamma)
\]Taking the infimum over all non-contractible loops $\Gamma$ yields the desired systolic bound. \end{proof}

\begin{prop}[Non-Sharp Pu] \label{prop:non_reversible_Pu_sota} Let $F$ be a Finsler metric on the projective plane $\mathbb{RP}^2$. Then
\[
\on{sys}(\mathbb{RP}^2,F)^2 \le \frac{3}{4} \cdot \volHT(\mathbb{RP}^2,F)
\]
Moreover, any Finsler metric achieving equality must have unit cotangent fibers given by triangles.\end{prop}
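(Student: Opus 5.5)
The plan is to combine Lemma \ref{lem:rev_systole}, Theorem \ref{thm:ivanov_pu} and Lemma \ref{lem:reversibilization_volume} applied to the reversibilization $\bar F$. First, by Lemma \ref{lem:rev_systole} we have $\on{sys}(\mathbb{RP}^2,F) \le \on{sys}(\mathbb{RP}^2,\bar F)$. The metric $\bar F$ is reversible and continuous. It is in fact smooth away from the zero section and strongly convex, since it is an average of two strongly convex norms and the Hessian of $\bar F^2$ dominates a positive combination of the Hessians. In any case only continuity is needed for Theorem \ref{thm:ivanov_pu}. That theorem then gives
\[
\on{sys}(\mathbb{RP}^2,\bar F)^2 \le \frac{1}{2}\volHT(\mathbb{RP}^2,\bar F).
\]
Next, Lemma \ref{lem:reversibilization_volume} with $n = 2$ gives the constant $\frac{4!}{2^2 (2!)^2} = \frac{24}{16} = \frac32$, so $\volHT(\mathbb{RP}^2,\bar F) \le \frac{3}{2}\volHT(\mathbb{RP}^2,F)$. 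Chaining the three inequalities yields
\[
\on{sys}(\mathbb{RP}^2,F)^2 \le \on{sys}(\mathbb{RP}^2,\bar F)^2 \le \frac12 \cdot \frac32 \volHT(\mathbb{RP}^2,F) = \frac34 \volHT(\mathbb{RP}^2,F).
\]

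For the equality statement, suppose equality holds. Then every inequality in the chain must be an equality, and in particular $\volHT(\mathbb{RP}^2,\bar F) = \frac32 \volHT(\mathbb{RP}^2,F)$. By the equality case of Lemma \ref{lem:reversibilization_volume}, the fiberwise bodies must be simplices, i.e.\ triangles in dimension two. The one point to handle carefully is the bookkeeping of which fiber (the unit disk or the unit codisk) is the triangle. The Holmes--Thompson volume integrates the codisk $K_x$, and $\bar K_x = (K_x - K_x)/2$ is the symmetrization of the codisk. The Rogers--Shephard equality case therefore forces the codisk $K_x$ itself to be a triangle, up to a measure-zero set of $x$, and then everywhere by continuity of $F$.

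This is exactly the claim that the unit cotangent fibers are triangles. As noted in the proof of Lemma \ref{lem:reversibilization_volume}, duality exchanges simplices with simplices, so the tangent unit balls are triangles as well.

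I expect no serious obstacle. The only subtle point is that a genuinely smooth, strongly convex $F$ cannot have triangular fibers. The equality case is therefore vacuous for honest Finsler metrics and only meaningful for continuous or Lipschitz limits, which is consistent with the statement.
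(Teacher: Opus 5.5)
Your proposal is correct and matches the paper's proof: you chain Lemma \ref{lem:rev_systole}, Theorem \ref{thm:ivanov_pu} and Lemma \ref{lem:reversibilization_volume} (constant $3/2$ for $n=2$), and you read off the equality case from the Rogers--Shephard equality condition. Your bookkeeping is slightly more direct than the paper's. You note that the codisk of $\bar F$ is $(K_x-K_x)/2$, so the codisks themselves must be triangles, and you pass from almost every $x$ to every $x$ by continuity. The paper instead passes through the unit disks and then dualizes, but the argument is the same.
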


\begin{proof} We simply apply Theorem \ref{thm:ivanov_pu}, Lemma \ref{lem:rev_systole} and Lemma \ref{lem:reversibilization_volume} to see that
\[
\on{sys}(\mathbb{RP}^2,F)^2 \le \on{sys}(\mathbb{RP}^2,\bar{F})^2 \le \frac{1}{2} \cdot \volHT(\mathbb{RP}^2,\bar{F}) \le \frac{1}{2} \cdot \frac{3}{2} \cdot \volHT(\mathbb{RP}^2,F). 
\]
Moreover, equality can only be achieved if the last inequality is an equality, which occurs if and only if the fibers of the unit disk bundle of $F$ are triangles by Lemma \ref{lem:reversibilization_volume}. Since the dual of a triangle is again a triangle, this occurs only if the fibers of $D^*_F\mathbb{RP}^2$ are triangles. \end{proof}

\noindent Identifying the optimal systolic inequalities for non-reversible Finsler metrics on the projective plane remains an open problem. In particular, it is subject to the following conjecture.

\begin{conjecture}[Non-Reversible Pu \cite{alonso2024survey}]\label{conj:nonreversible_pu} Let $F$ be a (non-reversible) Finsler metric on $\mathbb{RP}^2$. Then
\[
\on{sys}(\mathbb{RP}^2,F)^2 \le \frac{1}{2} \cdot \volHT(\mathbb{RP}^2,F)
\]
Moreover, a strongly convex and smooth Finsler metric $F$ achieves equality if and only if $F$ is Zoll. \end{conjecture}

Pairs of systolic geodesics on a reversible Finsler projective plane also satisfy the following useful intersection property. 

\begin{lemma}[Systolic Intersections] \label{lem:systolic_intersections} Any pair of distinct systolic geodesics for a reversible Finsler metric $F$ on $\mathbb{RP}^2$ intersect exactly once.
\end{lemma}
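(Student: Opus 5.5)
The plan is to split the claim into two parts: two distinct systolic geodesics must meet, and they cannot meet twice or more. I will argue with $\mathbb{Z}/2$-intersection theory for the first part and a cut-and-paste length argument for the second. Let $\Gamma_1,\Gamma_2$ be distinct systolic geodesics, each of length $\ell = \on{sys}(\mathbb{RP}^2,F)$. By Lemma \ref{lem:embedded_systoles} both are embedded, and both are non-contractible, so each represents the generator of $\pi_1(\mathbb{RP}^2) = \mathbb{Z}/2$. Since they are distinct geodesics, they cannot share a tangent vector up to sign. Reversibility matters here, because a reversed geodesic is again a geodesic, so uniqueness of geodesics with a given initial direction (up to sign) applies. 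Hence all intersections are transverse, and there are finitely many of them by compactness. The $\mathbb{Z}/2$ intersection number of the nontrivial class with itself is $1$, so $|\Gamma_1 \cap \Gamma_2|$ is odd, and in particular nonzero.

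Now suppose, for contradiction, that they meet in at least three points. Pick two distinct intersection points $p,q$. They split $\Gamma_1$ into arcs $a_1,a_2$ and $\Gamma_2$ into arcs $b_1,b_2$, with
\[
\on{length}(a_1)+\on{length}(a_2)=\on{length}(b_1)+\on{length}(b_2)=\ell .
\]
All paths run from $p$ to $q$, so we may form the four loops $a_i \star \bar b_j$. Reversibility is used again here, since $\on{length}_F(\bar b_j)=\on{length}_F(b_j)$. In $\pi_1$ we have $[a_1\bar a_2]\ne 0$ and $[b_1 \bar b_2]\neq 0$. It follows that among the loops $a_i\bar b_j$, exactly two are nontrivial, and they form a complementary pair: either $\{a_1\bar b_1, a_2\bar b_2\}$ or $\{a_1\bar b_2,a_2\bar b_1\}$. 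In either case the two nontrivial loops have total length $2\ell$, so each has length $\ge \ell$, and therefore both have length exactly $\ell$. Each such loop is then a systolic loop, hence a closed geodesic by minimality (Theorem \ref{thm:sys_spectral} and the first variation argument). But at $p$ it has a corner: it switches from $\Gamma_1$ to $\Gamma_2$ at a transverse crossing. A corner can be shortcut by a local chord, which strictly shortens the loop without changing its homotopy class. This contradicts minimality, so the intersection is a single point.

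The main obstacle is making the corner-cutting step rigorous in the Finsler setting. This needs strict convexity of $F$ (Definition \ref{def:finsler_metric}), which guarantees that at a genuine corner a short geodesic chord is strictly shorter than the broken path, in the same way as the strict triangle inequality in a small convex neighborhood. I would first try to cite the standard fact that length-minimizing loops in a homotopy class are smooth geodesics. A second subtlety is that the case with two or more intersections automatically means at least three, by the parity argument, so at least two distinct points $p,q$ are always available and the argument above does not need a separate case.
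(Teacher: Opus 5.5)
Your proof is correct and follows essentially the same route as the paper. Mod $2$ intersection theory gives at least one intersection point; for two intersection points you reglue the arcs, using reversibility, into a complementary pair of non-contractible loops whose lengths sum to twice the systole, so both are minimizers and hence smooth geodesics, which is impossible at the crossing. Your transversality and parity remarks are harmless but not needed, and your corner-cutting step is the same fact the paper phrases as uniqueness of geodesics with a given tangent vector.
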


\begin{proof} Let $\Gamma_1$ and $\Gamma_2$ be two systolic geodesics. The homology classes $[\Gamma_1]$ and $[\Gamma_2]$ each generate the first homology group of $\mathbb{RP}^2$. Therefore their mod 2 intersection number is one, and the curves $\Gamma_1$ and $\Gamma_2$ must intersect at least once. 

\vspace{3pt}

Suppose that $\Gamma_1 \cap \Gamma_2$ contains two distinct points $x$ and $y$. Then we can decompose $\Gamma_1$ and $\Gamma_2$ into path compositions
\[
\Gamma_i = \Gamma_i^x \star \Gamma_i^y \qquad\text{for $i = 1,2$.}
\]
Here $\Gamma_i^x$ is a path from $x$ to $y$ and $\Gamma_i^y$ is a path from $y$ to $x$ for each $i$. We now consider the closed piecewise smooth curves given by
\[
\widetilde{\Gamma}_{xx} = \Gamma_1^x \star -\Gamma^x_2 \qquad \widetilde{\Gamma}_{xy} = \Gamma_1^x \star \Gamma_2^y \qquad \widetilde{\Gamma}_{yy} = \Gamma_1^y \star -\Gamma_2^y \qquad  \widetilde{\Gamma}_{yx} = -\Gamma_1^y \star -\Gamma_2^x
\]
\begin{figure}
    \centering
    \includegraphics[width=\linewidth]{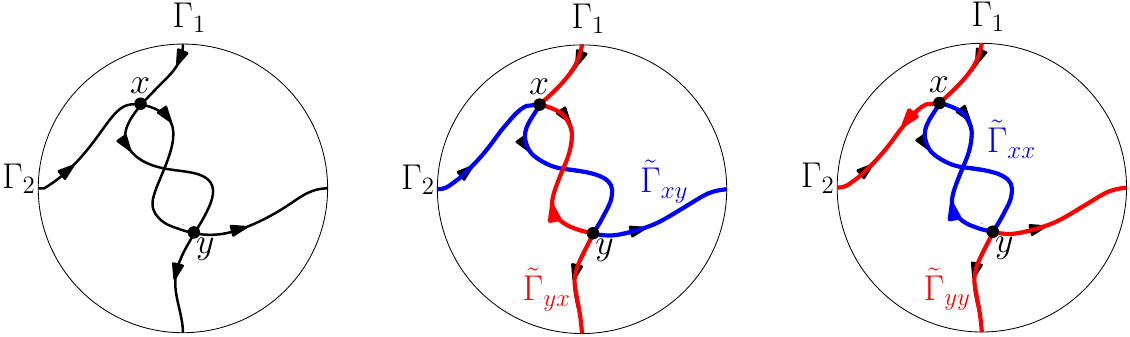}
    \caption{An illustration of geodesics $\Gamma_1, \Gamma_2$ intersecting at more than one point, and decompositions of their union into different pairs of loops.}
    \label{fig:intersecting_systoles}
\end{figure}
See Figure~\ref{fig:intersecting_systoles}.
Here $-\Gamma_i$ denotes the curve $\Gamma_i$ with the reverse parametrization. Note that we have the following formulas in homology.
\[
[\widetilde{\Gamma}_{xx}] - [\widetilde{\Gamma}_{xy}] = [\Gamma_1]
\]
This implies that one of the curves $\widetilde{\Gamma}_{xx}$ and $\widetilde{\Gamma}_{xy}$ must be non-contractible.  Without loss of generality, assume $\widetilde{\Gamma}_{xy}$ is non-contractible, the proof of the other case is analogous. Then,
\[
[\widetilde{\Gamma}_{xy}] + [\widetilde{\Gamma}_{yx}] =[\Gamma_1]+[\Gamma_2] = 0 \qquad\text{and}\qquad \on{length}_F(\widetilde{\Gamma}_{xy}) + \on{length}_F(\widetilde{\Gamma}_{yx}) = 2 \cdot \on{sys}(\mathbb{RP}^2,F)
\]
It follows that both $\widetilde{\Gamma}_{xy}$ and $\widetilde{\Gamma}_{yx}$ are non-contractible with minimal length. They must therefore be smooth closed geodesics. However, this implies that the unit tangent vector of $\widetilde{\Gamma}_{xy}$ is well-defined at $x$, and equal to the tangent vector of both $\Gamma_1$ and $\Gamma_2$ at $x$. Two geodesics with the same tangent vector at a point coincide, so this is a contradiction in this case.  Note that we must use the reversibility of $F$ to conclude that
\[
\on{length}_F(\widetilde{\Gamma}_{xy}) + \on{length}_F(\widetilde{\Gamma}_{yx}) = \on{length}_F(\Gamma_1) + \on{length}_F(\Gamma_2)  = 2 \cdot \on{sys}(\mathbb{RP}^2,F)
\]
Otherwise, this case is identical to the first case. This concludes the proof. \end{proof}

\subsection{Shortest Geodesics} In this paper, we will also be interested in length-area inequalities for Finsler metrics on the sphere. Since the sphere is simply connected, we must replace the systole with the shortest geodesic length.

\begin{definition}[Shortest Geodesic] The \emph{shortest geodesic length} of a Finsler metric $F$ on a closed manifold $M$ is the minimal length of a non-constant closed geodesic on $M$.
\[
\on{sg}(M,F) := \on{min}\big\{\on{length}_F(\Gamma) \; : \; \text{ non-constant closed $F$-geodesic }\Gamma \subset M\big\}
\]
\end{definition}

The shortest geodesic length does not satisfy the same monotonicity and continuity properties as the systole. However, one can still obtain a length-area bound relating the shortest geodesic with the area using an auxiliary invariant called the theta width (cf. \cite[Thm 1.3]{balacheff2015measurements}). 

\begin{thm}[Balacheff] \label{thm:shortest_geodesic_ineq} Let $F$ be a (possibly non-reversible) Finsler metric on the sphere $S^2$. Then
\[
\on{sg}(S^2,F)^2 \le C \cdot \volHT(S^2,F) \qquad\text{where}\qquad C \le (8 + \frac{11\sqrt{3}}{2})^2
\]
\end{thm}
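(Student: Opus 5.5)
The plan is to deduce this from Balacheff's bound \cite[Thm 1.3]{balacheff2015measurements}, which controls the length of some closed geodesic by an auxiliary \emph{theta width} $\theta(S^2,F)$ and then compares that width to the area. I would work entirely with the dual picture through the Legendre transform. That lets me interpret Holmes-Thompson area as the symplectic area of the codisk bundle $D^*_FS^2$, using the paper's normalization $\volHT(S^2,G_{\on{std}}) = 4\pi^2$. This ties the constant in the statement to the one in Balacheff's theorem.

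\textbf{Step 1: a short geodesic from a minimax.}
\begin{itemize}
\item Take a sweepout of $S^2$ by closed curves (for instance, level sets of a Morse function), each of $F$-length at most roughly $\theta(S^2,F)$.
\item Run a Birkhoff-type curve-shortening minimax on the family. Because of the non-reversibility, curves must be kept oriented and shortened using the forward length $\on{length}_F$.
\item The minimax produces a non-constant closed geodesic of length at most the width.
\end{itemize}
The non-reversible version requires care. The shortening process must be defined using the asymmetric Finsler length, and the tightening must not collapse the family to point curves. This is standard from the Lusternik-Schnirelmann/Birkhoff theory in the Finsler setting, and I take it as given, as in Balacheff.

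\textbf{Step 2: bound the width by the area.} I expect this to be the main obstacle.
\begin{itemize}
\item Via a John-type comparison on small disks, or via the coarea formula together with an isoperimetric estimate for Finsler disks, construct a sweepout whose curves have length at most $c\sqrt{\volHT}$.
\item The construction should follow Croke's argument, as refined by Balacheff. Either there is a short separating curve that cuts the sphere into two disks of comparable area, so one recurses, or a small ball has large area relative to its radius.
\item Track the constants through this argument to reach $8+\tfrac{11\sqrt3}{2}$.
\end{itemize}
In the non-reversible case, the Holmes-Thompson area is comparable to the area of the reversibilization $\bar F$. By the Rogers-Shephard bound in Lemma \ref{lem:reversibilization_volume} with $n=2$, the factor is $\tfrac32$. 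So one can build the sweepout for $\bar F$ and control forward lengths through $F\le 2\bar F$, at the cost of a constant factor.

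\textbf{Step 3: assemble.} Squaring the bound from Step 1 and substituting the area bound from Step 2 gives $\on{sg}^2\le C\,\volHT$ with the stated constant. In this paper the result is simply cited from \cite{balacheff2015measurements}, so the proof amounts to checking two things: that our normalization of $\volHT$ agrees with Balacheff's, or changes $C$ only by a harmless factor, and that his hypotheses allow non-reversible smooth Finsler metrics.
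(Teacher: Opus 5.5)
Your proposal matches the paper, which gives no independent argument: it simply cites \cite[Thm 1.3]{balacheff2015measurements} for possibly non-reversible Finsler two-spheres. It remarks that its $\volHT$, normalized so that $\volHT(S^2,G_{\on{std}})=4\pi^2$, is $\pi$ times the usual Holmes--Thompson area, and this factor enters as $C=c^2/\pi\le c^2$, so the stated bound $C\le(8+\frac{11\sqrt{3}}{2})^2$ holds a fortiori. Your Steps 1--2 are unnecessary, and the reversibilization route in Step 2 would not reproduce the stated constant: it costs $2\sqrt{3/2}=\sqrt{6}$ in length and only yields $C\le\frac{6}{\pi}(8+\frac{11\sqrt{3}}{2})^2$, so the non-reversible case has to come directly from Balacheff's theorem, as the paper asserts.
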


\begin{remark} Note that the constant in Theorem \ref{thm:shortest_geodesic_ineq} differs from the constant appearing in \cite[Thm 1.3]{balacheff2015measurements} by a factor of $\pi$ due to our normalization of the Holmes-Thompson volume. 
\end{remark}

\section{The Quaternion Cover} \label{sec:quaternion_covering}

In this section, we construct the quaternion covering map from the boundary of any centrally symmetric domain in the quaternions that is convex along $J$-complex lines to the unit cosphere bundle of the 2-sphere (Theorem \ref{thm:quaternion_cover} and Corollary~\ref{cor:quaternion_cover}). We then use this construction to prove generalizations of Theorems \ref{thm:intro_Q_Viterbo} and \ref{thm:intro_I_invariant_systolic} from the introduction. 

\subsection{Statement Of Covering Result} We start by stating our main covering theorem. The following notation for various structures on the quaternions will be useful.

\begin{notation}[Quaternion Structures] The quaternions can be divided into a direct sum of real and imaginary subspaces
\[
\on{Re} \HH = \R = \on{span}(1) \qquad\text{and}\qquad \im \HH = \on{span}(I,J,K)
\]
We denote the real part, imaginary part and conjugate of a quaternion $z$ by
\[
\on{Re}(z) \qquad \im(z) \qquad \bar{z} = \on{Re}(z) - \im(z)
\]
The quaternions also contain three different copies of the complex numbers determined by a choice of complex structure. 
\[
\C_I = \on{span}(1,I) \qquad \C_J = \on{span}(1,J)  \qquad \C_K = \on{span}(1,K)
\]
The standard inner product, symplectic form and Liouville form on the quaternions are given by
\[
\langle w,z\rangle = \on{Re}(w\bar{z}) \qquad \Omega(w,z) = \on{Re}(wI\bar{z}) \qquad \lambda_{\HH,z}(v) = \frac{1}{2}\on{Re}(zI\bar{v})
\]
We will also need to use the standard Liouville form on $T^*\HH = \HH \times \HH$. We will denote this Liouville form by $\lambda_{\HH \times \HH}$, which can be written explicitly as
\begin{equation}\label{eq:theta_def} \lambda_{\HH \times \HH,w,z}(u,v) = \langle z,u\rangle\end{equation} \end{notation}

We will also need the following action of the quaternion group on the cotangent bundle of the sphere. Note that in the following definition, we regard $T^*S^2$ as a subset of $T^*\R^3 = \R^3 \times \R^3$. 

\begin{definition}[Cotangent Action] The \emph{standard $Q$-action} of the quaternion group on the cotangent bundle $T^*S^2$ is the unique smooth action such that
\[
I \cdot (x,v) = (-x,-v) \qquad J \cdot (x,v) = (x,-v) \qquad K \cdot (x,v) = (-x,v)
\]
\end{definition}

\begin{remark} Note that a Finsler metric $F$ on $S^2$ descends to the projective plane $\mathbb{RP}^2$ if and only if it is $I$-invariant. Similarly, a Finsler metric is reversible if and only if it is $J$-invariant.
\end{remark}

Our main covering result can now be stated using the notation and definitions above as follows. 

\begin{thm}[Quaternion Covering] \label{thm:quaternion_cover} There is a two-sheeted, $Q$-equivariant exact symplectic covering
\[
\pi:(\HH^\times,\lambda_{\HH}) \to (T^*S^2 - S^2,\lambda_{T^*S^2})
\]
such that, for any  centrally symmetric smooth domain $X \subset \HH$ that is convex along $J$-complex lines, the image $\pi(X - 0)$ is given by
\[
D^*_FS^2 - S^2 \qquad\text{for a unique smooth Finsler metric $F$ on $S^2$}.
\]
Moreover, the unit disk bundle $D^*_FS^2$ has squared fibers in the sense of Definition \ref{def:squared_domain} and the ball $B \subset \HH$ corresponds to the multiple $\frac{1}{16} G_{\on{std}}$ of the standard round Riemannian metric  on $S^2$. \end{thm}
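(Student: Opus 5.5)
The plan is to write $\pi$ down explicitly as a quadratic map built from quaternionic conjugation, check its properties directly, and then read off the Finsler metric fiber by fiber using the results of Section \ref{sec:finsler_metrics}. Identify $\im \HH = \R^3$, so that $T^*S^2 = \{(x,p) : |x| = 1,\ \langle x,p\rangle = 0\}$. For $q \in \HH^\times$ I would set
\[
\pi(q) = \Big(\frac{qJ\bar q}{|q|^2},\; c\cdot qK\bar q\Big),
\]
with a constant $c>0$ to be fixed later. The map $u \mapsto qu\bar q$ is $|q|^2$ times a rotation of $\im\HH$. Hence the first component is a unit imaginary quaternion, and the second is orthogonal to it because $J \perp K$. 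So $\pi$ lands in $T^*S^2 - S^2$.

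For $Q$-equivariance under right multiplication I would use the identities $IJI = J$, $IKI = K$, $JJJ=-J$ and $JKJ = K$, together with $\bar I=-I$ and $\bar J=-J$. These give $\pi(qI) = (-x,-p)$ and $\pi(qJ) = (x,-p)$, which is the standard $Q$-action; the case of $K$ follows.

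Next I would describe the fibers. The element $q e^{J\theta}$ has the same first component $x$ as $q$, and its second component is $p$ rotated by $2\theta$ in the plane $x^\perp$. Consequently:
\begin{itemize}
\item $\pi$ maps the right $J$-complex line $q\C_J$ onto the punctured cotangent fiber $T^*_xS^2 - 0$.
\item In suitable linear coordinates this restriction is exactly the squaring map $\sigma$.
\end{itemize}
Since left multiplication by unit quaternions acts transitively on $S^2$ by rotations, one sees that $\pi$ is surjective and two-to-one, with $\pi(q)=\pi(-q)$. It is a local diffeomorphism by a rank count at $q=1$ and equivariance, and hence a two-sheeted covering.

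For exactness, both $\pi^*\lambda_{T^*S^2}$ and $\lambda_\HH$ are $2$-homogeneous under scaling. Both are also invariant under left multiplication by unit quaternions, because this acts by cotangent-lifted rotations downstairs and unitarily (it commutes with right multiplication by $I$) upstairs. So it suffices to compare the two forms at the point $q=1$. This computation determines $c$, and I expect it to be the main place where care is needed, since one must also check that the forms agree and not merely their differentials. A check: the ball $|q|\le 1$ maps to $|p| \le c$, a round codisk bundle, and the normalization should give $c=1/4$, which matches the claimed metric $\frac1{16}G_{\on{std}}$.

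Finally, let $X$ be a centrally symmetric smooth domain that is convex along $J$-complex lines. For each line $L = q\C_J$, the slice $X\cap L$ is convex and centrally symmetric, and it contains $0$ in its interior by radial transversality. Under the coordinate identification above, $\pi(X\cap L)$ is $\sigma(X\cap L)$. This is a squared domain containing the origin in its interior, so it is the unit codisk of a squared asymmetric norm on $T^*_xS^2$. By Lemma \ref{lem:square_stays_convex} this norm is strongly convex. Its smooth dependence on $x$ follows from smoothness of $\partial X$, transversality to the radial field, and the fact that $\pi$ is a local diffeomorphism. Duality (Lemma \ref{lem:stron_costrong_duality}) then produces the smooth Finsler metric $F$ on $S^2$. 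Uniqueness is clear, because $F$ is determined by its unit codisk bundle $D^*_FS^2 = \pi(X-0)\cup S^2$.
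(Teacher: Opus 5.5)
Your proposal is correct and follows the paper's proof essentially step for step: the same map (the paper's $\psi$, with $c=\tfrac14$), the same right-multiplication identities, and the same identification of $\pi$ on each line $q\C_J$ with the squaring map, which feeds into Lemma \ref{lem:square_stays_convex}. The only differences are minor: you reduce the Liouville identity and the rank check to the point $q=1$ via left $S^3$-invariance and $2$-homogeneity, whereas the paper computes directly at a general $z$ in the direction $vz$; and you get the covering property from ``local diffeomorphism with constant fiber cardinality,'' whereas the paper uses properness and Ehresmann's theorem.
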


\begin{remark} We will refer to the metric $F$ in Theorem \ref{thm:quaternion_cover} as the \emph{corresponding Finsler metric} to $X$.
\end{remark}

\begin{remark}[Lipschitz Metrics] \label{rmk:lipschitz_metrics} For a general centrally symmetric convex body $X \subset \HH$, there is an associated Lipschitz Finsler metric $F$ defined by the property that $D^*_FS^2 - S^2 = \pi(X - 0)$. In this setting, $F$ is not necessarily strongly convex, but $F^*$ is squared (and in particular strongly convex) fiberwise.
\end{remark}

\noindent In our applications, it will be more convenient to use the following version of Theorem \ref{thm:quaternion_cover} stated using the cosphere bundle.

\begin{cor}[Cosphere Covering] \label{cor:quaternion_cover} Let \(X \subset \mathbb{H}\) be a smooth centrally symmetric domain that is convex along $J$-complex lines. Then there is a Finsler metric \(F\) on \(S^2\) and a two-sheeted covering map
\[ \pi : \partial X \longrightarrow S_F^*S^2,  \qquad\text{such that}\qquad \pi^*\lambda_{T^*S^2} = \lambda_{\HH}|_{\partial X}.\]
Moreover, if $X$ is invariant under right-multiplication by $J$ then $F$ is reversible and if $X$ is invariant under right-multiplication by $I$, then $F$ descends to a Finsler metric on $\mathbb{RP}^2$. \end{cor}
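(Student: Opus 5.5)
We deduce the corollary directly from Theorem \ref{thm:quaternion_cover}. Let $F$ be the corresponding Finsler metric, so that $\pi(X - 0) = D^*_FS^2 - S^2$. The first step is to show that $\pi$ restricts to a two-sheeted covering $\partial X \to S^*_FS^2$. The covering $\pi:\HH^\times \to T^*S^2 - S^2$ is two-sheeted. Since $\pi$ is exact symplectic, it intertwines the Liouville vector fields: the radial field $\tfrac12 z\partial_z$ on $\HH$, which is $\lambda_\HH$-dual to $\Omega$, and the fiberwise radial field on $T^*S^2$. Hence $\pi$ maps radial rays onto fiberwise radial rays, and a point of $\partial X$ goes to the boundary of $\pi(X-0)$ along its ray. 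Concretely:
\begin{itemize}
\item Since $\partial X$ is transverse to the radial field, each ray in $\HH^\times$ meets $\partial X$ exactly once.
\item Each fiberwise ray in $T^*S^2 - S^2$ meets $S^*_FS^2$ exactly once.
\end{itemize}
Therefore $\pi(\partial X) = S^*_FS^2$, and $\pi^{-1}(S^*_FS^2) = \partial X$.

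For the equivariance of preimages, we need the deck transformation of $\pi$ to preserve $X$. I expect the deck involution to be $z \mapsto -z$, which is the action of $-1 = I^2 \in Q$; by the definition of the cotangent action, $-1$ acts trivially on $T^*S^2$. Central symmetry of $X$ then gives $\pi^{-1}(\pi(X-0)) = X - 0$. A restriction of a covering map to a full preimage is again a covering, so $\pi|_{\partial X}$ is a two-sheeted covering onto $S^*_FS^2$. The identity $\pi^*\lambda_{T^*S^2} = \lambda_\HH|_{\partial X}$ is simply the restriction of the exactness $\pi^*\lambda_{T^*S^2} = \lambda_\HH$.

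Next we handle the symmetries, using $Q$-equivariance of $\pi$. Suppose $X$ is invariant under right multiplication by $J$. Then
\[
\pi(X - 0) = \pi\big((X - 0)J\big) = J\cdot \pi(X - 0).
\]
So $D^*_FS^2$ is invariant under $(x,v)\mapsto(x,-v)$, and every cotangent unit ball is symmetric. The dual norms, and hence $F$ itself, are therefore reversible.

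Similarly, suppose $X$ is invariant under right multiplication by $I$. Then $D^*_FS^2$ is invariant under $(x,v)\mapsto(-x,-v)$, which is the cotangent lift of the antipodal map $a$, since $(da^*)$ acting on covectors of $S^2\subset\R^3$ is $v\mapsto -v$. Thus $F$ is invariant under the antipodal map and descends to $\mathbb{RP}^2$, as noted in the remark following the definition of the cotangent action.

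The one delicate point is that the right-multiplication $Q$-action upstairs must match the $Q$-action named in the theorem, and that the deck group is exactly $\{\pm1\}$. This is asserted by the theorem ("$Q$-equivariant, two-sheeted"), so we take it as given. If needed, the two-sheetedness forces the deck group to have order two; since $-1$ is central in $Q$ and acts trivially downstairs, $-1$ is the deck transformation.
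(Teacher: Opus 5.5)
Your proposal is correct and follows the same route as the paper. The paper gives no separate proof of this corollary and treats it as an immediate consequence of Theorem \ref{thm:quaternion_cover}: restrict the covering to the $\{\pm 1\}$-saturated set $X - 0$, and use $Q$-equivariance for the reversibility and $\mathbb{RP}^2$ statements.

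One step needs a fix. Transversality of $\partial X$ to the radial field does not, by itself, make each ray meet $\partial X$ exactly once; you also need the origin in the interior of $X$ and convexity along $J$-complex lines. Alternatively, you can drop rays entirely: $X - 0 = \pi^{-1}(D^*_FS^2 - S^2)$ together with $\pi$ being a local homeomorphism gives $\partial X = \pi^{-1}(S^*_FS^2)$ directly.
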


\begin{remark}[Metrics-To-Domains] Unfortunately, the converse of Theorem \ref{thm:quaternion_cover} is not true. There are Finsler metrics that do not lift to centrally symmetric domains that are convex along $J$-complex lines. Specifically, the fiberwise convexity may fail. However, if a given Finsler metric lifts to a corresponding domain $X$, then any metric that is conformally equivalent to it also lifts to a domain. In particular, we have the following proposition.
\end{remark}
\begin{cor}[Lifting Metrics] Let $F$ be a Finsler metric that is conformally equivalent to the round metric $g_{\on{std}}$. Then $F$ corresponds to a centrally symmetric domain $X$ convex along $J$-complex lines.
\end{cor}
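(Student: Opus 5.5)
Given $F$ conformal to $g_{\on{std}}$, write $F = e^{f}\cdot\sqrt{g_{\on{std}}}$ for a smooth function $f$ on $S^2$. Then the dual metric is $F^* = e^{-f}\cdot \sqrt{g_{\on{std}}^*}$ (possibly after the factor $\frac{1}{16}$ normalization of Theorem \ref{thm:quaternion_cover}). Hence the unit codisk bundle is
\[
D^*_FS^2 = \{(x,p) : |p|_{\on{std}} \le c\, e^{f(x)}\},
\]
which is a fiberwise rescaling of the round codisk bundle $D^*_{G}S^2$ for $G = \frac{1}{16}G_{\on{std}}$. By Theorem \ref{thm:quaternion_cover}, $D^*_GS^2 - S^2 = \pi(B - 0)$ for the unit ball $B$. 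I propose to define
\[
X := \{0\}\cup\pi^{-1}\big(D^*_FS^2 - S^2\big) \subset \HH
\]
and then check three properties: $X$ is centrally symmetric, it is a smooth domain transverse to the radial field, and it is convex along $J$-complex lines.

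\textbf{Symmetry and radial transversality.} Central symmetry follows because $\pi$ is a two-sheeted covering whose deck transformation is $z\mapsto -z$. This is consistent with $I^2=-1$ acting trivially on $T^*S^2$ through $(x,v)\mapsto(x,v)$. Alternatively, I would read it off from the fiber model $z\mapsto z^2$ described in the Depiction remark. Next, since $\pi$ is exact symplectic, it intertwines the Liouville vector fields. The radial field $\frac12 z\partial_z$ on $\HH$ maps to the fiberwise radial field $p\partial_p$ on $T^*S^2$. More precisely, $\pi$ is fiberwise homogeneous of degree two: $\pi(tz) = t^2\cdot\pi(z)$ in the fiber. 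This is the key structural fact I would extract from the construction (equivalently, from the fiber map being $z\mapsto z^2$). Using it, $X$ is star-shaped with radial function
\[
r_X(z/|z|) = \big(c\,e^{f(x)}/|\pi(z/|z|)|\big)^{1/2},
\]
where $x$ is the base point of $\pi(z)$. Since $B$ maps onto the round bundle, $|\pi(z/|z|)|$ is constant, so $r_X(u) = r_0\, e^{f(\pi_{S^2}(u))/2}$. Therefore $\partial X$ is smooth and transverse to the radial vector field.

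\textbf{Convexity along $J$-complex lines.} Fix a $J$-complex line $L$. By construction, $L = \pi^{-1}(T^*_xS^2)$ for a single base point $x$, so $\pi_{S^2}$ is constant on $L - 0$. Hence $r_X$ restricted to $L$ is the constant $r_0e^{f(x)/2}$. In other words, $X\cap L$ is a round disk in the 2-plane $L$ (round for the restricted Euclidean metric, since $B\cap L$ is round), and in particular it is convex. This also confirms that the Finsler metric corresponding to $X$ under Theorem \ref{thm:quaternion_cover} is exactly $F$, by uniqueness.

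The main obstacle I anticipate is justifying the two facts about $\pi$ used above: that every $J$-complex line is a full fiber preimage $\pi^{-1}(T^*_xS^2)$, and that $\pi$ is quadratically homogeneous along such lines. Both should follow directly from the explicit form of the covering in Section \ref{sec:quaternion_covering}. That covering is presumably $z\mapsto(\bar z I z/|z|^2,\ \text{a quadratic expression in } z)$, from which homogeneity of degree $0$ for the base and degree $2$ for the fiber is evident. If only a lower-regularity statement of the covering were available, I would instead argue via equivariance under the real scaling $z\mapsto tz$ combined with exactness: $\pi^*\lambda_{T^*S^2} = \lambda_\HH$, and $\lambda_\HH$ scales by $t^2$.
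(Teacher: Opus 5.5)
Your proposal is correct and follows the route the paper indicates in the remark just before this corollary. The ball corresponds to $\frac{1}{16}G_{\on{std}}$, and multiplying the metric by a conformal factor $e^{f}$ amounts to rescaling each slice $X\cap L$, with $L = z\C_J$, by the constant $e^{f(x)/2}$; this keeps the slice a round, hence convex, disk.

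The two facts you flag as the main obstacle are supplied directly by the paper, once you use the actual covering $\psi(z)=\big(zJ\bar z/|z|^2,\tfrac14 zK\bar z\big)$ rather than your guessed formula. Constancy of the base point on $J$-complex lines is $\psi_1(zu)=\psi_1(z)$ for $u\in\C_J - 0$ (Lemma \ref{lem:right_multiplication}). Quadratic fiber homogeneity is $4|\psi_2(z)|=|z|^2$ (proof of Lemma \ref{lem:covering_map}).
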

Note that every Riemannian metric is diffeomorphic to one that  is conformally equivalent to the round metric $g_{\on{std}}$.
\subsection{Construction Of Covering} Let us now construct the covering map described in Theorem \ref{thm:quaternion_cover}. We start by introducing some useful maps. Note that $\im \HH$ is canonically identified with $\mathbb{R}^3$. There is thus a canonical inclusion of $S^2$ into $\HH$ and the differential induces an inclusion
\[
TS^2 \to T\mathbb{H} = \mathbb{H} \times \mathbb{H} \qquad\text{where}\qquad T_z\HH = z \times \HH
\]
By using the identification $T^*S^2 = TS^2$ via the standard round metric, this induces an inclusion
\begin{equation} \label{eq:TS2_inclusion}
\iota:T^*S^2 \to \mathbb{H} \times \mathbb{H}\end{equation}
We denote the image of the complement of the zero section in $T^*S^2$ under $\iota$ by
\begin{equation}
W = \iota(T^*S^2 - S^2)
\end{equation}
We will also need to consider the following map from the group of units within the quaternions. 
\begin{equation}
\psi:\mathbb{H}^\times \to \mathbb{H} \times \mathbb{H} \qquad\text{given by}\qquad \psi(z) =  \Big(\frac{zJ\bar{z}}{|z|^2}, \frac{1}{4} zK\bar{z}\Big) 
\end{equation}
We denote the first and second components of this map by $\psi_1$ and $\psi_2$ respectively. 

\vspace{3pt}

We next establish various key properties of the maps $\iota$ and $\psi$. With these properties in hand, the proof of Theorem \ref{thm:quaternion_cover} will be straightforward. 

\begin{figure}[h!]
    \centering
    \includegraphics[width=.4\linewidth]{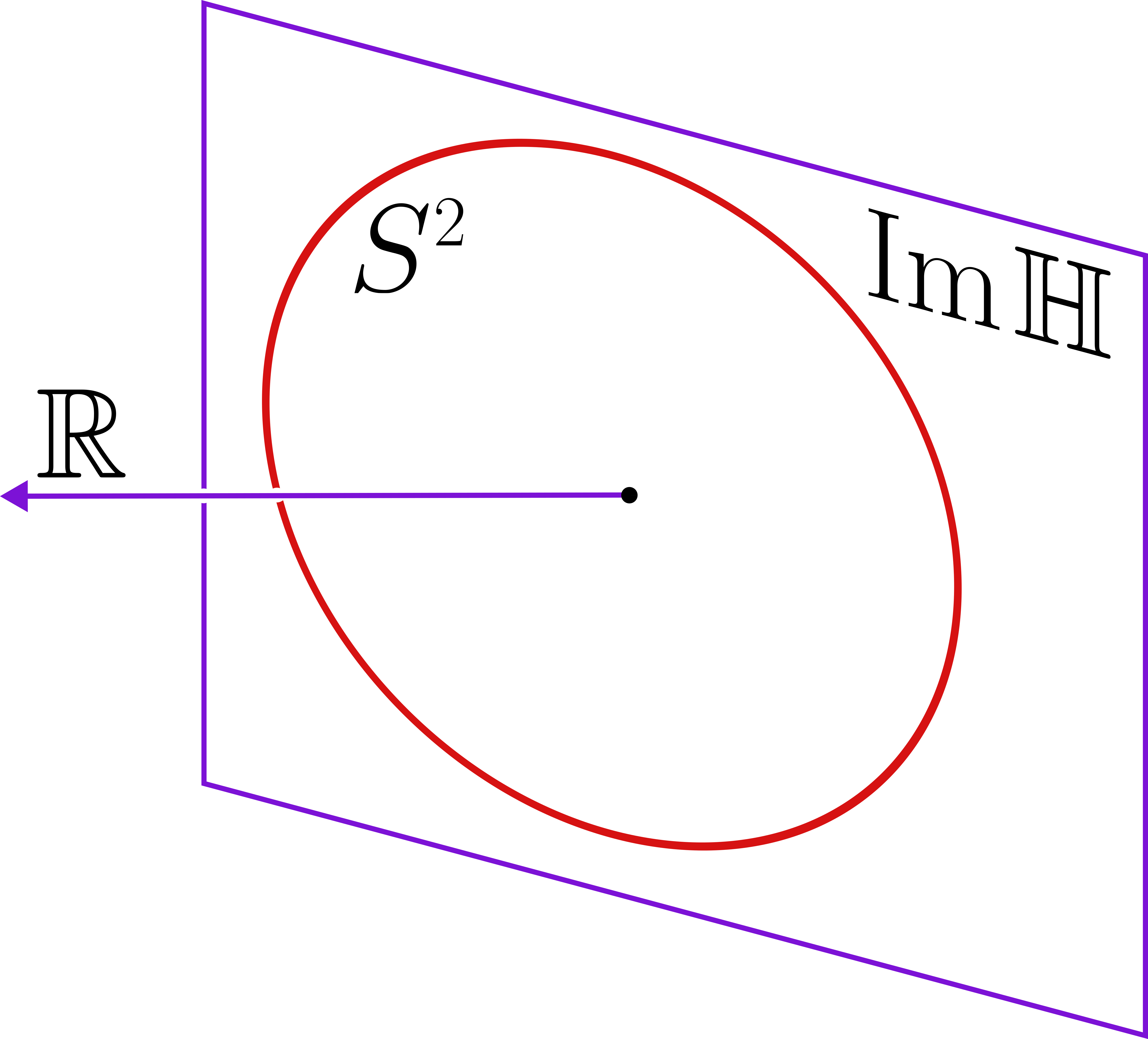}
    \caption{A cartoon of the unit sphere in the hyperplane spanned by $I,J$ and $K$ in $\mathbb{H}$.}
    \label{fig:sphere_in_H}
\end{figure}

\begin{lemma}[Imaginary And Orthogonal] \label{lem:image_j}For any $z$ in $\HH^\times$, the images $\psi_1(z)$ and $\psi_2(z)$ are imaginary and mutually orthogonal. Moreover, the product of $\psi_1(z)$ and $\psi_2(z)$ is given by
\[\psi_1(z)\psi_2(z) = \frac{1}{4}zI\bar{z}\]
\end{lemma}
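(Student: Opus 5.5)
The plan is to use the fact that conjugation $w \mapsto z w \bar{z}$ preserves the imaginary subspace and acts on it by $|z|^2$ times a rotation. It is therefore an orthogonal similarity of $\im \HH$ that respects the quaternion product up to scaling.

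\textbf{Imaginary parts.} First we check that $\psi_1(z)$ and $\psi_2(z)$ are imaginary. For any imaginary $w$ we have $\bar{w} = -w$, so
\[
\overline{z w \bar{z}} = z \bar{w} \bar{z} = -z w \bar{z}.
\]
Hence $z w \bar{z}$ has zero real part. Applying this with $w = J$ and $w = K$ shows that both components are imaginary.

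\textbf{Orthogonality.} Next we use that for imaginary quaternions $a,b$,
\[
ab = -\langle a,b\rangle + a \times b.
\]
In particular $\on{Re}(ab) = -\langle a,b\rangle$, so orthogonality is equivalent to the product $\psi_1(z)\psi_2(z)$ being purely imaginary. This will follow from the product computation below.

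\textbf{The product.} We compute directly, using $\bar{z} z = |z|^2$, which is real and central:
\[
\psi_1(z)\psi_2(z) = \frac{1}{4|z|^2}\, zJ\bar{z}\, zK\bar{z} = \frac{1}{4|z|^2}\, zJ|z|^2K\bar{z} = \frac{1}{4}\, zJK\bar{z} = \frac{1}{4}\, zI\bar{z}.
\]
The last step uses $JK = I$, which follows from $IJK = -1$: multiplying on the left by $I$ gives $-JK = -I$.

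By the first step, $zI\bar{z}$ is imaginary. Therefore the real part of the product vanishes, and so
\[
\langle \psi_1(z), \psi_2(z)\rangle = 0.
\]

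\textbf{Expected difficulty.} There is no real obstacle here. The only point needing care is the sign bookkeeping, namely the relation $JK = I$ and the identity $\on{Re}(ab) = -\langle a,b\rangle$ for imaginary $a,b$. The latter is immediate from the definition $\langle w,z\rangle = \on{Re}(w\bar{z})$ together with $\bar{b} = -b$.
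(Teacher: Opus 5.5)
Your proposal is correct and follows essentially the same route as the paper: you show $z w \bar{z}$ is imaginary for imaginary $w$ by conjugating, compute $\psi_1(z)\psi_2(z) = \frac{1}{4} zJK\bar{z} = \frac{1}{4} zI\bar{z}$ using $\bar{z}z = |z|^2$, and then deduce orthogonality from the vanishing real part of $\psi_1(z)\overline{\psi_2(z)} = -\psi_1(z)\psi_2(z)$. Your opening remark about conjugation acting on $\im \HH$ as a scaled rotation is never used and can be dropped.
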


\begin{proof} Note that an element $w$ in $\HH$ is imaginary if and only if it is the negative of its conjugate. This holds for the products $zI\bar{z}$ since
\[
\overline{zI\bar{z}} = \bar{\bar{z}}\bar{I}\bar{z} = -zI\bar{z} 
\]
The same calculation applies to $zJ\bar{z}$ and $zK\bar{z}$. It follows that $\psi_1(z)$ and $\psi_2(z)$ are imaginary, or equivalently contained in $\on{Im}\HH$. Next, we compute that
\[
\psi_1(z)\psi_2(z) = \frac{1}{4|z|^2} \cdot zJ\bar{z}zK\bar{z}  = \frac{1}{4} \cdot zJK\bar{z} = \frac{1}{4} \cdot zI\bar{z}
\]
Finally, we note that $\psi_1(z)$ is orthogonal to $\psi_2(z)$ since the inner product of $\psi_1(z)$ and $\psi_2(z)$ is the real part of $\psi_1(z)\overline{\psi_2(z)} = -\psi_1(z)\psi_2(z)$, which is zero.
\end{proof}

\begin{lemma}[Right Action] \label{lem:right_multiplication}The map $\psi$ has the following properties under the right multiplication.
\begin{equation} \label{eq:C_J_action}
\psi_1(zu) = \psi_1(z) \qquad \text{and}\qquad \psi_2(zu) = \frac{1}{4}zu^2K\bar{z}  \qquad\text{for any $u \in \C_J - 0$}
\end{equation}
\begin{equation} \label{eq:Q_action} 
\psi(-z) = \psi(z) \qquad \psi(zI) = -\psi(z) \qquad\text{and}\qquad \psi(zK) = (-\psi_1(z),\psi_2(z))
\end{equation}
\end{lemma}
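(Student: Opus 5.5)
Both claims reduce to direct algebraic computations in $\HH$. The facts I will use are that conjugation is an anti-automorphism ($\overline{ab} = \bar b \bar a$), that $u\bar u = |u|^2$ is a real scalar, and that for $u \in \C_J$ the element $u$ commutes with $J$ and satisfies $K u = \bar u K$. The last identity holds because $K$ anticommutes with $J$ and commutes with reals.

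\textbf{Formula (\ref{eq:C_J_action}).} For $\psi_1$, I write $u = a + bJ$, so that $uJ\bar u = Ju\bar u = |u|^2 J$. Then
\[
\psi_1(zu) = \frac{zuJ\bar u\bar z}{|zu|^2} = \frac{|u|^2\, zJ\bar z}{|z|^2|u|^2} = \psi_1(z).
\]
For $\psi_2$, I compute
\[
\psi_2(zu) = \tfrac14 zuK\bar u\bar z = \tfrac14 z u (K\bar u)\bar z = \tfrac14 z u (u K)\bar z = \tfrac14 z u^2K\bar z,
\]
using $K\bar u = uK$, which is the conjugate form of $Ku = \bar u K$ applied to $\bar u$.

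\textbf{Formula (\ref{eq:Q_action}).} The first identity is immediate because $\psi$ is quadratic in $z$ (the denominator $|z|^2$ is unchanged), so $\psi(-z) = \psi(z)$.

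For right multiplication by $I$, I use $\bar I = -I$, so that $\overline{zI} = -I\bar z$. Then
\[
\psi_1(zI) = -\frac{zIJI\bar z}{|z|^2},
\]
and since $IJI = KI = J$, this gives $\psi_1(zI) = -\psi_1(z)$. Similarly,
\[
\psi_2(zI) = -\tfrac14 zIKI\bar z,
\]
and since $IKI = (IK)I = -JI = K$, this gives $\psi_2(zI) = -\psi_2(z)$.

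For right multiplication by $K$, I use $\overline{zK} = -K\bar z$. The relevant conjugations are $KJK = J$ and $KKK = -K$. These give
\[
\psi_1(zK) = -\psi_1(z) \qquad\text{and}\qquad \psi_2(zK) = -\tfrac14 z(-K)\bar z = \psi_2(z).
\]
Alternatively, the $K$ case follows from (\ref{eq:C_J_action}) with $u = J$ combined with the $I$ case, since $K = IJ$ and so $zK = (zI)J$. With $u = J$ we have $u^2 = -1$, so right multiplication by $J$ fixes $\psi_1$ and negates $\psi_2$. Composing with the $I$ case then yields $(-\psi_1, \psi_2)$, which provides a consistency check.

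\textbf{Main obstacle.} There is no real obstacle here. The only care needed is tracking the sign conventions in the products of $I$, $J$, $K$, and I will double-check each triple product against the relation $IJK = -1$.
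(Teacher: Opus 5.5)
Your proof is correct and takes essentially the same route as the paper. Both are direct computations using that $u \in \C_J$ commutes with $J$ and satisfies $K\bar u = uK$, together with $\overline{zI} = -I\bar z$, $\overline{zK} = -K\bar z$ and the triple products $IJI = J$, $IKI = K$, $KJK = J$, $K^3 = -K$. The differences are minor: you get $\psi(-z) = \psi(z)$ from evenness of $\psi$ (the paper deduces it from the other identities), and you add a consistency check of the $K$ case via $zK = (zI)J$.
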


\begin{proof} We prove these identities by direct computation. For the identities (\ref{eq:C_J_action}), note that $uJ = Ju$ and $uK = K\bar{u}$ for any element $u$ in $\C_J$. Therefore
\[
\psi_1(zu) = \frac{zuJ\bar{u}\bar{z}}{|u|^2|z|^2} =  \frac{zu\bar{u}J\bar{z}}{|u|^2|z|^2} = \frac{|u|^2}{|u|^2} \frac{zJ\overline{z}}{|z|^2} = \psi_1(z) \qquad\text{and}\qquad 
\psi_2(zu) = \frac{1}{4}zuK\bar{u}\bar{z} = \frac{1}{4}zu^2K\bar{z}
\]
For the identities (\ref{eq:Q_action}), note that the first identity follows from each of the others. To verify the second and third identities, we compute $\psi_i(zI)$ and $\psi_i(zK)$ for $i = 1,2$. Note that
\[
\psi_1(zI) = \frac{zIJ\overline{zI}}{|z|^2} = \frac{-zIJI\ \overline{z}}{|z|^2} = \frac{zI^2J\ \overline{z}}{|z|^2}  = -\psi_1(z)
\]
A similar calculation shows that $\psi_1(zK) = -\psi_1(z)$ and $\psi_2(zI) = -\psi_2(z)$. On the other hand
\[
\psi_2(zK) =  \frac{zKK\overline{zK}}{4} = \frac{-zK^2K\overline{z}}{4} =  \psi_2(z) \qedhere
\]
\end{proof}

\begin{lemma}[Liouville Form]\label{lem:Liouville_j} The maps $\iota$ and $\psi$ respect the Liouville forms in the sense that 
\[\iota^*\lambda_{\HH \times \HH} = \lambda_{T^*S^2} \qquad\text{and}\qquad \psi^*\lambda_{\HH \times \HH} =\lambda_\HH
\]\end{lemma}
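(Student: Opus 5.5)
The plan is to verify both identities by unwinding the definition \eqref{eq:theta_def} of $\lambda_{\HH\times\HH}$: for a map $\Phi = (\Phi_1,\Phi_2)$ into $\HH\times\HH$ we have
\[
(\Phi^*\lambda_{\HH\times\HH})_p(v) = \langle \Phi_2(p), d\Phi_1(p)v\rangle .
\]
So each claim reduces to a pointwise computation of $\langle \Phi_2, d\Phi_1\rangle$.

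\textbf{The map $\iota$.} Write a point of $T^*S^2$ as $(x,p)$ with $x \in S^2 \subset \im\HH$ and $p \in T_xS^2$, using the round-metric identification. Then $\iota(x,p) = (x,p)$. A tangent vector at $(x,p)$ has the form $(\dot x,\dot p)$ with $\dot x \in T_xS^2$, and it is sent to $(\dot x,\dot p)$. Hence
\[
(\iota^*\lambda_{\HH\times\HH})(\dot x,\dot p) = \langle p,\dot x\rangle .
\]
Under the identification $T^*S^2 = TS^2$ this is exactly $p(d\pi(\dot x,\dot p))$, the tautological form $\lambda_{T^*S^2}$. This part is immediate.

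\textbf{The map $\psi$.} Fix $z\in\HH^\times$ and a vector $v\in\HH$, and differentiate
\[
\psi_1(z) = \frac{zJ\bar z}{|z|^2}
\]
along $v$:
\[
d\psi_1(z)v = \frac{vJ\bar z + zJ\bar v}{|z|^2} - \frac{2\langle z,v\rangle}{|z|^4}\, zJ\bar z .
\]
The second term is a multiple of $\psi_1(z)$. By Lemma \ref{lem:image_j}, $\psi_1(z)$ is orthogonal to $\psi_2(z)$, so this term pairs to zero against $\psi_2(z)$. Using $\langle a,b\rangle = \on{Re}(a\bar b)$ and $\overline{zK\bar z} = -zK\bar z$, we are left with
\[
\langle \psi_2(z), d\psi_1(z)v\rangle = -\frac{1}{4|z|^2}\Big(\on{Re}(vJ\bar z zK\bar z) + \on{Re}(zJ\bar v zK\bar z)\Big).
\]

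\textbf{Simplifying the two terms.} Both use $\bar z z = |z|^2$ and $JK = I$, $KJ = -I$.
\begin{itemize}
\item[(i)] The first term collapses to $|z|^2\on{Re}(vI\bar z)$. Taking the conjugate inside the real part gives $\on{Re}(vI\bar z) = -\on{Re}(zI\bar v)$.
\item[(ii)] For the second term, cyclically permute inside the real part using $\on{Re}(ab) = \on{Re}(ba)$. This gives
\[
\on{Re}(\bar v zK\bar z zJ) = |z|^2\on{Re}(\bar v zKJ) = -|z|^2\on{Re}(\bar v zI) = -|z|^2\on{Re}(zI\bar v).
\]
\end{itemize}
Substituting, the bracket equals $-2|z|^2\on{Re}(zI\bar v)$, so
\[
\langle \psi_2(z), d\psi_1(z)v\rangle = \tfrac12\on{Re}(zI\bar v) = \lambda_{\HH,z}(v).
\]

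\textbf{Main obstacle.} The only delicate point is sign bookkeeping with noncommuting quaternions: the order of $J$ and $K$, conjugation reversing products, and the cyclicity of $\on{Re}$. A useful sanity check is to test the final identity at $z = 1$ and $v = I$, where both sides can be evaluated directly.
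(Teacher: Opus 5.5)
Your proposal is correct and follows essentially the same route as the paper. Both identities are checked by computing $\langle \psi_2(z), d\psi_1(z)\,\cdot\,\rangle$, discarding the normalization term using the orthogonality in Lemma \ref{lem:image_j}, and simplifying with cyclicity and conjugation-invariance of $\on{Re}$; the only cosmetic difference is that the paper differentiates along the directions $vz$ and quotes $\psi_1(z)\psi_2(z) = \frac{1}{4}zI\bar{z}$, whereas you differentiate along $v$ directly and use $\bar{z}z = |z|^2$ and $JK = I$ inline.
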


\begin{proof} The first identity follows from the fact that the Liouville form $\lambda_{T^*S^2}$ on $T^*S^2$ is identified with the Liouville form on $TS^2$ given precisely by the formula (\ref{eq:theta_def}) restricted to $TS^2 \subset T\mathbb{H}$. For the second identity, we begin by computing the differential of $\psi_1$ at $z$ in the direction $vz$.
\begin{align*}
T_z\psi_1(vz) &= \frac{d}{dt}\big(\frac{(1 + tv)zJ\overline{(1 + tv)z}}{|z(1 + tv)|^2}\big)|_{t = 0} = \frac{vzJ\bar{z} + zJ\bar{z}\bar{v}}{|z|^2} - 2\langle 1,v\rangle \cdot \frac{zJ\bar{z}}{|z|^2} \\
&= v\psi_1(z) + \psi_1(z)\bar{v} - 2\on{Re}(v) \psi_1(z)
\end{align*}
Since $\psi_1(z)$ and $\psi_2(z)$ are mutually orthogonal and both imaginary by Lemma \ref{lem:image_j}, this implies
\begin{align*}
    (\psi^*\lambda_{\HH \times \HH})_z(vz)&= \langle \psi_2(z),T_z\psi_1(vz)\rangle = \langle \psi_2(z),v\psi_1(z) + \psi_1(z)\bar{v}\rangle  \\
    &= \on{Re}(\overline{\psi_2(z)}v\psi_1(z)) + \on{Re}(\overline{\psi_2(z)}\psi_1(z)\bar{v}))
\end{align*}
Moreover, the real part is invariant under cyclic permutation and conjugation, so that
\[
  (\psi^*\lambda_{\HH \times \HH})_z(vz) = \on{Re}(v\psi_1(z)\overline{\psi_2(z)}) + \on{Re}(v\overline{\psi_1(z)}\psi_2(z))) = -2\on{Re}(v\psi_1(z)\psi_2(z)) 
\]
Note that here we use the fact that $\overline{\psi_1(z)} = -\psi_1(z)$ and $\overline{\psi_2(z)} = -\psi_2(z)$. 
Finally, we apply Lemma \ref{lem:image_j} to find that
\[
  (\psi^*\lambda_{\HH \times \HH})_z(vz)  = -\frac{1}{2}\on{Re}(vzI\bar{z}) = \lambda_{\HH}(vz) \qedhere
\]\end{proof}

\begin{lemma}[Covering Map] \label{lem:covering_map} The map $\psi$ is a two-sheeted covering map
\[\HH^\times \to W\]
Moreover, the $\Z/2$-action given by multiplication  by $\pm 1$ acts freely and transitively on the fiber. \end{lemma}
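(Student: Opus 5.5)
The strategy is to exhibit $\psi$ as the product of the identity on a radial factor and the classical double cover $S^3 \to SO(3)$, after identifying both $\HH^\times$ and $W$ with products of $(0,\infty)$ and a compact group.

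\textbf{Step 1: $\psi$ lands in $W$.} Concretely,
\[
W = \{(x,v) \in \im\HH \times \im\HH : |x| = 1,\ \langle x,v\rangle = 0,\ v \neq 0\}.
\]
By Lemma \ref{lem:image_j}, $\psi_1(z)$ and $\psi_2(z)$ are imaginary and orthogonal. Multiplicativity of the norm gives
\[
|\psi_1(z)| = \frac{|z||J||\bar z|}{|z|^2} = 1 \qquad\text{and}\qquad |\psi_2(z)| = \frac{|z|^2}{4} > 0.
\]
Hence $\psi(\HH^\times) \subset W$.

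\textbf{Step 2: polar decomposition.} Write $z = rq$ with $r = |z| > 0$ and $q \in S^3$ a unit quaternion. Then
\[
\psi(z) = \Big(qJ\bar q,\ \frac{r^2}{4}\, qK\bar q\Big).
\]
Recall that $c_q: w \mapsto qw\bar q$ preserves $\im\HH$ and is an isometry of it. It lies in $SO(3)$ because $S^3$ is connected and $c_1 = \on{id}$. The map $q \mapsto c_q$ is the standard surjective homomorphism $S^3 \to SO(3)$ with kernel $\{\pm 1\}$. Its surjectivity and the computation of its kernel I would quote as standard; they can also be checked directly, since $c_q = \on{id}$ forces $q$ to be central and hence real.

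\textbf{Step 3: identify $W$ with $(0,\infty) \times SO(3)$.} Define
\[
\Phi: (0,\infty) \times SO(3) \to W, \qquad \Phi(s,A) = \big(AJ,\ sAK\big).
\]
This map is smooth. It has a smooth inverse
\[
(x,v) \mapsto \big(|v|,\ A_{x,v}\big),
\]
where $A_{x,v}$ is the unique element of $SO(3)$ sending the positively oriented orthonormal frame $(I,J,K)$ to the frame $(y,\,x,\,v/|v|)$. Here $y$ is the unique unit vector completing $(x, v/|v|)$ to a positively oriented orthonormal frame, so $y$ is a smooth cross-product expression in $x$ and $v/|v|$.

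\textbf{Step 4: factor $\psi$.} Under Step 2 and Step 3, $\psi$ becomes the composition
\[
\HH^\times \cong (0,\infty) \times S^3 \longrightarrow (0,\infty)\times SO(3) \xrightarrow{\ \Phi\ } W, \qquad (r,q) \mapsto (r^2/4,\ c_q).
\]
The first factor $r \mapsto r^2/4$ is a diffeomorphism of $(0,\infty)$, and $q \mapsto c_q$ is a two-sheeted covering. Therefore $\psi$ is a two-sheeted covering map onto $W$. Its fibers are $\{rq, -rq\} = \{z,-z\}$, so multiplication by $-1$ acts freely and transitively on each fiber, consistent with $\psi(-z) = \psi(z)$ from Lemma \ref{lem:right_multiplication}.

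The only point requiring care is the orientation bookkeeping in Step 3. The image frame $(qI\bar q, qJ\bar q, qK\bar q)$ must be positively oriented, so that $A_{x,v}$ exists and is unique in $SO(3)$ rather than merely in $O(3)$. This follows from $c_q \in SO(3)$, which is a consequence of the connectedness of $S^3$.
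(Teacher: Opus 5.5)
Your proof is correct, but it takes a different route from the paper's.

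\textbf{The paper's route.} The argument there is soft:
\begin{itemize}
\item Lemma~\ref{lem:Liouville_j} makes $\psi$ a local symplectomorphism, hence a local diffeomorphism.
\item The identity $4|\psi_2(z)|=|z|^2$ makes $\psi$ proper.
\item Ehresmann's theorem then makes $\psi$ a fiber bundle with compact (hence finite) fibers.
\item The fiber is computed only over the single point $(J,K/4)$. There one needs $|z|=1$, and $z$ must commute with $J$ and $K$. This forces $z\in\C_J\cap\C_K$, so $z=\pm1$.
\end{itemize}

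\textbf{Your route.} You trivialize $W\cong(0,\infty)\times SO(3)$ explicitly and factor $\psi$ as $(r,q)\mapsto(r^2/4,c_q)$. This reduces everything to the classical double cover $S^3\to SO(3)$.

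\textbf{Comparison.} The algebraic core is the same. The paper's fiber computation at $(J,K/4)=\Phi(1/4,\mathrm{id})$ is exactly the kernel computation $c_z=\mathrm{id}\Rightarrow z=\pm1$.
\begin{itemize}
\item What your version buys: it is global and explicit. It gives surjectivity directly, whereas the paper leaves implicit that a proper local diffeomorphism into the connected $W$ is surjective with constant fiber cardinality. It does not use the Liouville computation. It also exhibits $W\cong(0,\infty)\times\RP^3$ concretely.
\item What it costs: you import surjectivity and the kernel of $S^3\to SO(3)$ as standard facts. The paper's argument needs nothing about $SO(3)$ and reuses a lemma it needs anyway.
\end{itemize}

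\textbf{A small clarification.} The orientation point you flag at the end is not what makes $A_{x,v}$ exist. Any orthonormal pair $(x,v/|v|)$ has the unique positively oriented completion $y=x\times v/|v|$. What the orientation is really needed for is that $c_q$ lies in $SO(3)$, the domain of $\Phi$, so that $\Phi^{-1}\circ\psi=(r^2/4,c_q)$ in Step~4. You already established this in Step~2.
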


\begin{proof} By Lemma \ref{lem:image_j}, the image of $\psi$ is contained in $W$. By Lemma \ref{lem:Liouville_j}, the map is locally Liouville and therefore locally symplectic. In particular, the differential is non-singular. Moreover, the map $\psi_2$ is proper as a map from $\HH^\times$ to $\HH^\times$ since $4|\psi_2(z)| = |z|^2$ and thus $\psi$ itself is also proper. It follows from Ehresmann's Theorem that $\psi:\HH^\times \to W$ is a fiber bundle with compact fiber. The $\Z/2$-action acts freely on the fibers by Lemma \ref{lem:right_multiplication}. Finally, to compute the fiber, we consider an element $z$ such that $\psi(z) = (J,K/4)$. Since $4|\psi_2(z)| = |z|^2$, we must have $|z| = 1$ and therefore
\[
J = zJ\bar{z} = zJz^{-1} \qquad K = zK\bar{z} = zKz^{-1}
\]
On the other hand, an element $z$ commutes with a complex structure $I,J$ or $K$ if and only if it lies in the corresponding copy of $\C$. Since $\C_J \cap \C_K = \R = \on{span}(1)$, this implies that $z$ is a unit element of $\R$. Equivalently, the fiber of $(J, K/4)$ consists of two elements $z = \pm 1$. \end{proof}

\begin{lemma}[Fiberwise Convexity] \label{lem:fiber_convex} Let $L \subset \mathbb{H}$ be a $J$-complex line and let $X \subset L$ be a closed, centrally symmetric convex domain. Then
\[
\psi_2(X) \subset \mathbb{H} \qquad\text{is a squared convex domain.}
\]
\end{lemma}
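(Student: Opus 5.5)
The plan is to reduce the statement to the squaring map on $\C_J$ using the right-action identity (\ref{eq:C_J_action}) of Lemma \ref{lem:right_multiplication}. The main point to get right is the normal form of $L$. Here ``$J$-complex'' is taken with respect to right multiplication by $J$, which is the convention compatible with the covering and with Lemma \ref{lem:right_multiplication}.

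First, I would describe $L$ explicitly. Pick any unit vector $z \in L$. Since $L$ is invariant under right multiplication by $J$, it contains $z$ and $zJ$. These are orthogonal, because $\langle z, zJ\rangle = \on{Re}(z\bar{J}\bar{z}) = |z|^2\on{Re}(-J) = 0$, so they span the two-dimensional space $L$. Hence
\[
L = z\cdot\C_J = \{zu : u \in \C_J\}.
\]
Consequently $X = zK'$, where $K' = z^{-1}X \subset \C_J$. The set $K'$ is again a closed, centrally symmetric convex domain, since left multiplication by $z^{-1}$ is a real-linear isometry of $\HH$ carrying $L$ onto $\C_J$.

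Second, I would compute $\psi_2$ on $L$. By (\ref{eq:C_J_action}), for $u \in \C_J - 0$ we have
\[
\psi_2(zu) = \tfrac14\, z u^2 K \bar z,
\]
and this also holds at $u=0$ by continuity. Define the map
\[
A : \C_J \to \HH, \qquad A(w) = \tfrac14\, z w K \bar z .
\]
It is real-linear and satisfies $|A(w)| = |w|/4$, so it is injective, and its image is a two-dimensional real subspace $E \subset \HH$. With this notation,
\[
\psi_2(X) = A(\sigma_J(K')),
\]
where $\sigma_J(u) = u^2$ is the squaring map on $\C_J$. Identifying $\C_J$ with $\C$ via $J \mapsto i$ turns $\sigma_J$ into $\sigma$ and $K'$ into a centrally symmetric convex body in $\C$. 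The inverse $A^{-1} : E \to \C_J \cong \C$ is then the affine (indeed linear) isomorphism required by Definition \ref{def:squared_domain}. So $\psi_2(X) \subset E$ is a squared domain.

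Third, I would confirm that $\psi_2(X)$ is in fact convex, which is the part of ``squared convex domain'' not automatic from the definition. This follows from the argument of Lemma \ref{lem:square_stays_convex}. In the smooth case, the curvature computation there shows that the boundary of $\sigma(K')$, parametrized by the radial function $\tau = \rho(\cdot/2)^2$, has strictly positive curvature. In general, one approximates $K'$ by smooth centrally symmetric convex bodies; the sets $\sigma(K'_i)$ are convex and converge in the Hausdorff sense to $\sigma(K')$, so $\sigma(K')$ is convex. Convexity is preserved by the linear map $A$.

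I expect the main obstacle to be the first step: verifying that every $J$-complex line has the form $z\C_J$ under the right-multiplication convention. Once that normal form is in place, the rest is a direct substitution into (\ref{eq:C_J_action}).
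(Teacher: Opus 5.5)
Your proposal is correct and follows the paper's proof. The paper likewise factors $\psi_2|_L = B_z\circ\sigma\circ A_z$, with $A_z(w) = z^{-1}w$ and $B_z(w) = \frac14 zwK\bar z$, using the identity $\psi_2(zu) = \frac14 zu^2K\bar z$, and then invokes Lemma \ref{lem:square_stays_convex} to get convexity of $\sigma(A_z(X))$. Your additional checks are sound refinements of steps the paper leaves implicit: that $L = z\C_J$, that $B_z$ is injective so its inverse is the required affine isomorphism, and that convexity of $\sigma(K')$ comes from the curvature and approximation argument rather than from that lemma's formal hypothesis, which already assumes a norm.
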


\begin{proof} Fix a non-zero element $z$ in $L$. Then any other element $w$ can be written uniquely as $w = zu$ for the element $u = z^{-1}w$ in $\C_J = \C$. By Lemma \ref{lem:right_multiplication}, we can write $\psi_2|_L$ as the composition
\[
\psi_2|_L = B_z \circ \sigma \circ A_z \qquad\text{where}\qquad A_z(w) = z^{-1}w \qquad \sigma(u) = u^2 \qquad B_z(w) = \frac{1}{4}zwK\bar{z}
\]
In particular, $\psi_2$ is given on $L$ by the squaring map $\sigma:\C \to \C$ composed on the left and the right with a linear isomorphism $A_z:L \to \C_J = \C$ and a linear map $B_z:\C \to \HH$. Since $A_z$ is linear, $A_z(X) \subset \C$ is convex and centrally symmetric. Therefore, by Lemma \ref{lem:square_stays_convex}, the image $\sigma(A_z(X)) \subset \C$ is a squared convex domain. Finally, $\psi_2(X)$ is the image $B_z(\sigma(A_z(X)))$ of a squared convex domain under a linear map, and is therefore squared convex.   \end{proof}

We can now proceed with the proof of the main theorem in this section, Theorem \ref{thm:quaternion_cover}. 
\begin{proof}[Proof of Theorem \ref{thm:quaternion_cover}] We simply take the covering map to be
\[
\pi:\HH^\times \to T^*S^2 - S^2 \qquad\text{given by}\qquad \pi = \iota|_W^{-1} \circ \psi
\]
Lemma \ref{lem:covering_map} implies that $\pi$ is a 2-sheeted covering map. Lemmas \ref{lem:Liouville_j} and \ref{lem:right_multiplication} imply that the map preserves the Liouville forms and is $Q$-equivariant. Finally, Lemma \ref{lem:fiber_convex} implies that, if $X$ is a smooth, centrally symmetric domain that is convex along $J$-complex lines, then $\pi(X - 0) \cup S^2$ is a smooth domain in $T^*S^2$ whose fibers are squared convex domains in the fibers of $T^*S^2$. This implies that $\pi(X - 0) \cup S^2$ is the unit codisk bundle of a smooth 1-homogeneous function $F^*$ on $T^*S^2 - S^2$ that is squared, and thus strongly convex, on the fibers. Thus $F^*$ is dual to a smooth Finsler metric $F$ on $TS^2$, and
\[
\pi(X - 0) = D^*_F S^2 - S^2 \qedhere
\]\end{proof}

\section{Applications} \label{sec:applications} We now discuss the proofs of the applications of Theorem \ref{thm:quaternion_cover} to the systolic inequalities described in the introduction.

\vspace{3pt}

\subsection{Quaternionically Symmetric Domains.} First, by combining Corollary \ref{cor:quaternion_cover} with the Ivanov-Pu inequality, we can prove the following generalizations of Theorem \ref{thm:intro_Q_Viterbo} and Theorem \ref{thm:intro_Q_optimizers}.

\begin{thm}[Generalized $Q$-Symmetric Viterbo] \label{thm:generalized_Q_symetric_Viterbo} Let \(X \subset \mathbb{H}\) be a $Q$-invariant smooth domain that is  convex along $J$-complex lines. Then $X$ satisfies the Viterbo inequality.
\end{thm}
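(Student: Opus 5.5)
The plan is to transfer the problem to the Finsler projective plane via Corollary \ref{cor:quaternion_cover} and then apply the Ivanov--Pu inequality (Theorem \ref{thm:ivanov_pu}). We must bound both the volume and the systole of $X$ in terms of the corresponding metric $F$.

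\textbf{Step 1: the metric.} Since $-1 \in Q$, the domain $X$ is centrally symmetric. Invariance under right multiplication by $J$ and by $I$ then shows, by Corollary \ref{cor:quaternion_cover}, that the corresponding metric $F$ on $S^2$ is reversible and descends to a reversible Finsler metric on $\mathbb{RP}^2$.

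\textbf{Step 2: volume.} By Theorem \ref{thm:quaternion_cover}, $\pi$ restricts to a two-sheeted symplectic covering $X - 0 \to D^*_FS^2 - S^2$. Hence
\[
\on{vol}(X) = 2\,\volHT(S^2,F) = 4\,\volHT(\mathbb{RP}^2,F),
\]
using that $S^2 \to \mathbb{RP}^2$ is a double cover.

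\textbf{Step 3: systole.} Any closed characteristic $\Gamma \subset \partial X$ bounds a disk in $\HH$ of symplectic area equal to its action $\int_\Gamma \lambda_\HH$, so $\sys(X)$ is at most the action of any closed Reeb orbit. The relation $\pi^*\lambda_{T^*S^2} = \lambda_\HH|_{\partial X}$ implies that $\pi$ intertwines the Reeb flows (up to the fixed normalization), and the Reeb flow on $S^*_FS^2$ is the cogeodesic flow of $F$.

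Take a systolic geodesic $\gamma$ of $(\mathbb{RP}^2,F)$, which exists by Theorem \ref{thm:sys_spectral}. Since $\gamma$ is non-contractible, its lift to $S^2$ closes only after going around twice, giving a closed geodesic $\eta$ of length $2\,\sys(\mathbb{RP}^2,F)$. The cotangent lift of $\eta$ is a closed Reeb orbit in $S^*_FS^2$ of action $2\,\sys(\mathbb{RP}^2,F)$. Its preimage under the two-sheeted cover $\pi$ is a closed Reeb orbit in $\partial X$ of action at most twice that. Therefore
\[
\sys(X) \le 4\,\sys(\mathbb{RP}^2,F).
\]

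\textbf{Step 4: conclusion.} Combining Steps 2 and 3 with Theorem \ref{thm:ivanov_pu} gives
\[
\sys(X)^2 \le 16\,\sys(\mathbb{RP}^2,F)^2 \le 8\,\volHT(\mathbb{RP}^2,F) = 2\on{vol}(X).
\]
For the ball, $\sys(B) = \pi$ and $\on{vol}(B) = \pi^2/2$, so $\sys(B)^2 = 2\on{vol}(B)$. The displayed bound is therefore exactly the Viterbo inequality.

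As a consistency check, the ball corresponds to a round metric whose great circles in $S^2$ have length $\pi/2$. Each such circle lifts doubly to a Hopf orbit of action $\pi$, so all inequalities above are equalities for $B$.

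\textbf{Main obstacle.} The delicate point is Step 3. One must correctly track the factor of $2$ from the cover $S^2 \to \mathbb{RP}^2$ and the factor of $2$ from $\pi$, and verify that the Reeb vector fields correspond with matching normalization, so that actions are computed consistently. I would handle this by computing directly on the ball and then using the exactness of $\pi$ to reduce the general case to the fact that actions are integrals of the Liouville form.
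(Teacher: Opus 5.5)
Your proposal is correct and follows essentially the same route as the paper: pass to the reversible metric on $\mathbb{RP}^2$, show $\sys(X)\le 4\,\on{sys}(\mathbb{RP}^2,F)$ and $\on{vol}(X)=4\,\volHT(\mathbb{RP}^2,F)$, then apply Ivanov--Pu. The only cosmetic difference is that the paper works directly with the composite four-sheeted strict contact cover $\partial X\to S^*_{\bar F}\mathbb{RP}^2$ and contact volume of $\partial X$, while you factor through $S^2$ and use the symplectic volume of $X - 0$; the two bookkeepings are equivalent.
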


\begin{proof} Let $F$ be the corresponding Finsler metric on $S^2$ in Corollary \ref{cor:quaternion_cover}. Since $X$ is $Q$-invariant, $F$ is invariant under $J$ and $I$. Therefore $F$ descends to a reversible Finsler metric
\[
\bar{F} \qquad\text{on the projective plane}\qquad \mathbb{RP}^2
\]
By composing the covering map $\pi$ from Theorem \ref{thm:quaternion_cover} with the covering map of cosphere bundles induced by the covering $S^2 \to \mathbb{RP}^2$, we also have a strict contact 4-sheeted covering map
\begin{equation} \label{eq:Q_invariant_sys_projection}
\bar{\pi}:\partial X \to S^*_{\bar{F}}\mathbb{RP}^2
\end{equation}
Note that any closed geodesic in $\mathbb{RP}^2$ corresponds to a closed Reeb orbit in the cosphere bundle, which will lift to a collection of closed Reeb orbits in $\partial X$ of total period given by four times the length of the original geodesic. The systole of $\bar{F}$ is the length of the shortest non-contractible geodesic of  $\bar{F}$, and is not smaller than the  shortest closed Reeb orbit on the cosphere bundle. Therefore,
\begin{equation} \label{eq:Q_invariant_sys_bound}
\sys(X) \le 4 \cdot \on{sys}(\mathbb{RP}^2,\bar{F})
\end{equation}
Similarly, the Holmes-Thompson volume of $\mathbb{RP}^2$ with respect to $\bar{F}$ is precisely the contact volume of the cosphere bundle. It follows that 
\begin{equation} \label{eq:Q_invariant_vol_bound}
\on{vol}(X) = \on{vol}(\partial X,\lambda_{\HH}|_{\partial X}) = 4 \cdot \on{vol}(S^*_{\bar{F}}\mathbb{RP}^2,\lambda_{T^*\RP^2}) = 4 \cdot \volHT(\mathbb{RP}^2,\bar{F}) 
\end{equation}
Finally, we apply the Ivanov-Pu inequality in Theorem \ref{thm:ivanov_pu} to see that
\begin{equation} \label{eq:Q_invariant_vol_bound_sandwich}
\sys(X)^2 \le 16 \cdot \sys(\mathbb{RP}^2,\bar{F})^2 \le 8 \cdot \volHT(\mathbb{RP}^2,\bar{F}) = 2 \cdot \on{vol}(X)\end{equation} 
This is equivalent to the Viterbo inequality since $\sys(B)^2/\on{vol}(B) = 2$ for a unit ball $B$ in $\R^4$. \end{proof}

\begin{thm}[Generalized $Q$-Symmetric Optimizers] \label{thm:generalized_Q_symmetric_optimizers} Let \(X \subset \mathbb{H}\) be a $Q$-invariant smooth domain that is convex along $J$-complex lines. Then $X$ achieves equality in the Viterbo inequality if and only if $X$ is symplectomorphic to the standard 4-ball $B \subset \R^4$ with $\on{vol}(B) = \on{vol}(X)$.
\end{thm}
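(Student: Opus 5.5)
The "if" direction is immediate. A symplectomorphism between smooth star-shaped domains preserves volume. It also preserves the symplectic systole, since the boundary characteristic foliations correspond and the minimal action is invariant; equivalently, by monotonicity of EHZ-type invariants under symplectic embeddings between convex domains. The ball itself achieves equality, so $X$ does too.

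The "only if" direction goes through the equality case of the Ivanov-Pu inequality. Assume $X$ achieves equality. Let $\bar F$ be the reversible Finsler metric on $\mathbb{RP}^2$ from the proof of Theorem \ref{thm:generalized_Q_symetric_Viterbo}. Equality forces every inequality in (\ref{eq:Q_invariant_vol_bound_sandwich}) to be an equality. In particular
\[
\on{sys}(\mathbb{RP}^2,\bar F)^2 = \tfrac12 \volHT(\mathbb{RP}^2,\bar F).
\]
By Theorem \ref{thm:quaternion_cover}, $\bar F$ is smooth and, its dual fibers being squared, strongly convex (Lemmas \ref{lem:square_stays_convex} and \ref{lem:stron_costrong_duality}). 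So the rigidity part of Theorem \ref{thm:ivanov_pu} applies, and $\bar F$ is Zoll: every geodesic is closed with common minimal period $\ell = \on{sys}(\mathbb{RP}^2,\bar F)$. Consequently the Reeb flow on $S^*_{\bar F}\mathbb{RP}^2$ is periodic with all orbits of period $\ell$.

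Next we pull this back along the strict contact $4$-sheeted covering $\bar\pi:\partial X\to S^*_{\bar F}\mathbb{RP}^2$. Reeb orbits upstairs are lifts, so the Reeb flow on $\partial X$ is periodic. The equality $\sys(X)=4\ell$ (equality in (\ref{eq:Q_invariant_sys_bound})) then shows that every orbit on $\partial X$ is closed with the same minimal period $4\ell$. Here we use that the orbits are lifts: each lift has period a divisor multiple of $\ell$ at most $4\ell$, and no orbit can be shorter than $\sys(X)=4\ell$. Hence the Reeb flow on $(\partial X,\lambda_\HH|_{\partial X})$ is Zoll.

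Finally, we conclude from Zollness. A star-shaped domain in $\R^4$ whose boundary has Zoll Reeb flow is symplectomorphic to a ball. This is the classical argument (cf. \cite{AbbondandoloBramhamHryniewiczSalomao2018}): the Zoll contact form on $S^3$ is strictly contactomorphic to a multiple of the standard one, since the quotient is an $S^2$ with a symplectic form of fixed area, and the Boothby–Wang bundle is determined by its Euler class. A strict contactomorphism of boundaries extends radially, via the Liouville flow, to a symplectomorphism of the domains. Volume then pins down the radius.

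The main obstacle is the period bookkeeping upstairs. Zoll on $\mathbb{RP}^2$ only gives periodicity downstairs, and lifts through the 4-fold cover could a priori have periods $\ell$, $2\ell$ or $4\ell$. I would resolve this purely by the equality $\sys(X)=4\ell$ together with the observation that each lift has period at most $4\ell$. I would also check that no convexity assumption is needed in this last step, only star-shapedness, which holds since $\partial X$ is transverse to the radial field.
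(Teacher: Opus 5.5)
Your proposal is correct and follows the paper's proof essentially step for step. Equality in the chain (\ref{eq:Q_invariant_vol_bound_sandwich}) forces equality in Ivanov--Pu, and the rigidity clause then makes $\bar F$ Zoll. The equality $\sys(X)=4\on{sys}(\mathbb{RP}^2,\bar F)$ forces every lift through the $4$-sheeted strict contact cover to close up exactly at period $4\ell$, and Proposition \ref{prop:zoll_iff_ball} (Abbondandolo--Bramham--Hryniewicz--Salom\~{a}o) finishes. Your explicit accounting of the possible lift periods $\ell,2\ell,4\ell$ via the deck group, and your check that $\bar F$ is smooth and strongly convex so that rigidity applies, simply spell out what the paper's proof leaves implicit.
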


\noindent We require the following result of Abbondandolo-Bramham-Hryniewicz-Salom\~{a}o \cite[Prop 4.3]{abbondandolo2018sharp}. 

\begin{prop}[Zoll Ball] \label{prop:zoll_iff_ball} Let $X \subset \R^4$ be a smooth star-shaped domain. Then $X$ is symplectomorphic to the 4-ball $B \subset \R^4$ with $\on{vol}(B) = \on{vol}(X)$ if and only if the Reeb flow on $\partial X$ is Zoll.\end{prop}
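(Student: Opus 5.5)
The plan is to prove the two directions separately. The forward direction uses invariance of actions of closed characteristics. The reverse direction uses the Boothby--Wang picture of a Zoll contact form, followed by a gluing argument to fill in the interior.

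\emph{Ball implies Zoll.} Let $\phi : X \to B$ be a symplectomorphism.
\begin{enumerate}
\item Since $X$ is star-shaped it is contractible. The form $\phi^*\lambda_{B} - \lambda_{\HH}$ is closed on $X$, hence exact.
\item $\phi$ maps the characteristic foliation $\ker(\Omega|_{\partial X})$ to that of $\partial B$. So the Reeb flow of $\lambda_{\HH}|_{\partial X}$ is a reparametrization of the pullback of the Hopf flow, and every Reeb trajectory on $\partial X$ is closed.
\item For a closed characteristic, the period with respect to the restricted Liouville form equals its action. By step 1, the action differs from the action of the image orbit by the integral of an exact form, which is zero.
\item On $\partial B$ all Hopf fibers have the same action $\pi r^2$. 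Hence every Reeb orbit on $\partial X$ is closed with the same minimal period, i.e.\ the flow is Zoll.
\end{enumerate}

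\emph{Zoll implies ball.} Suppose the Reeb flow of $\alpha = \lambda_{\HH}|_{\partial X}$ is Zoll with period $T$.
\begin{enumerate}
\item The flow generates a free $S^1 = \R/T\Z$ action on $\partial X \cong S^3$. The quotient $\Sigma$ is a closed surface, $d\alpha$ descends to an area form $\omega_\Sigma$ on it, and $(\partial X,\alpha)$ is a prequantization bundle. Since the total space is $S^3$, we get $\Sigma \cong S^2$ with Euler number $\pm 1$, and $\int_\Sigma \omega_\Sigma = T$.
\item The same description holds for $\partial B_r$ with $\pi r^2 = T$. Moser's theorem gives an area-preserving diffeomorphism $\Sigma \to S^2$. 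Principal $S^1$-bundles with connection are classified by curvature up to gauge, and the residual gauge term is $df$ with $f$ a function on $S^2$ (since $H^1(S^2)=0$). This lifts to a strict contactomorphism $\Psi : (\partial X,\alpha) \to (\partial B_r,\lambda_{\HH}|_{\partial B_r})$.
\item Volume equality follows: for a Zoll contact form on $S^3$ one has $\on{vol}(X) = \int_{\partial X} \alpha\wedge d\alpha = T^2/2$. This equals $\on{vol}(B_r)$.
\item Both domains are star-shaped, so Liouville flows identify collars of $\partial X$ and $\partial B_r$ with pieces of the symplectizations $((1-\delta,1]\times \partial X,\, d(s\alpha))$. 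Hence $\Psi$ extends to a symplectomorphism between collar neighbourhoods of the boundaries.
\end{enumerate}

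\emph{Main obstacle: extending across the interior.} The naive conical extension of $\Psi$ to all of $X \setminus 0$ is not smooth at the origin. I would avoid this with a gluing argument.
\begin{enumerate}
\item Glue $X$ to $\R^4 \setminus \on{int}(B_r)$ along the collar symplectomorphism. This produces a symplectic manifold $(M,\omega)$ diffeomorphic to $\R^4$ and standard outside a compact set.
\item It contains no symplectic sphere, since $\omega$ is exact on $X$ by contractibility and it is exact on the standard end.
\item By Gromov's characterization of $\R^4$, there is a symplectomorphism $M \to \R^4$. It can be arranged to be the identity near infinity, after composing with a compactly supported correction using $H^1 = 0$ and the flexibility of symplectomorphisms of $\R^4$ standard at infinity.
\item Its restriction to $X$ is then a symplectomorphism $X \to B_r$, with $\on{vol}(B_r) = \on{vol}(X)$ by the volume computation above.
\end{enumerate}
The delicate point is step 3: normalizing the Gromov symplectomorphism to be the identity near infinity, so that the image of $X$ is exactly $B_r$. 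This is where I expect the main work to lie.
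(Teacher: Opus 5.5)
The paper does not prove this proposition; it quotes it from Abbondandolo--Bramham--Hryniewicz--Salom\~{a}o. So I assess your argument on its own terms. Your forward direction is correct. The reverse direction is also correct up to the construction of $M$:
\begin{itemize}
\item the Boothby--Wang/gauge argument gives a strict contactomorphism $\Psi\colon(\partial X,\alpha)\to(\partial B_r,\lambda_{\HH}|_{\partial B_r})$ with $\pi r^2=T$;
\item its conical extension $\Phi\colon\mathbb{R}^4\setminus 0\to\mathbb{R}^4\setminus 0$ satisfies $\Phi^*\lambda_{\HH}=\lambda_{\HH}$;
\item the glued manifold $M$ is exact.
\end{itemize}
One small slip: $\int_{\partial X}\alpha\wedge d\alpha=T\int_\Sigma\omega_\Sigma=T^2$, so the correct formula is $\mathrm{vol}(X)=\tfrac12\int_{\partial X}\alpha\wedge d\alpha$.

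Observe that $M$ is tautologically symplectomorphic to $\mathbb{R}^4$: use the identity on $X$ and $\Phi^{-1}$ on the end. So Gromov's theorem only adds something through its relative form, namely a symplectomorphism $\psi\colon M\to\mathbb{R}^4$ agreeing with the given identification of the end outside a compact set $K\subset M$. You should cite it in that form. Your proposed way of arranging this (``composing with a compactly supported correction'') does not work. For an arbitrary $\psi$, the discrepancy with the identity near infinity is not compactly supported.

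The genuine gap is your last step. Even with the relative form, $K$ may be much larger than $X$. Then $\psi$ is free to move the collar of $\partial X$, and there is no reason for $\psi(X)$ to be $B_r$. The missing ingredient is a rescaling by the Liouville flow:
\begin{itemize}
\item Because the gluing map is the conical extension of a strict contactomorphism, $\lambda_{\HH}$ on $X$ and on $\mathbb{R}^4\setminus\mathrm{int}\,B_r$ glue to a global primitive $\lambda_M$ of $\omega_M$. Its Liouville flow $\phi^t$ is the radial scaling $x\mapsto e^{t/2}x$ on both pieces.
\item Choose $R>r$ with $K\subset \mathrm{int}\,M_R$, where $M_R=X\cup(B_R\setminus\mathrm{int}\,B_r)$. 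Then $\psi$ is the identity on $M\setminus\mathrm{int}\,M_R$, so $\psi(M_R)=B_R$.
\item For $\tau=2\log(R/r)$, the flow $\phi^\tau$ maps $X$ onto $M_R$, with $(\phi^\tau)^*\omega_M=(R/r)^2\,\omega$.
\item Hence $x\mapsto \frac{r}{R}\,\psi(\phi^\tau(x))$ is a symplectomorphism $X\to B_r$.
\end{itemize}
Equivalently, let $\chi$ be a symplectomorphism of $\mathbb{R}^4$ equal to $\Phi$ outside a compact set $K'$. Then $x\mapsto \epsilon\,\chi(x/\epsilon)$ is symplectic, and by $1$-homogeneity of $\Phi$ it equals $\Phi$ outside $\epsilon K'$. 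For $\epsilon$ small it therefore maps $X$ onto $B_r$. With this rescaling added, and Gromov's theorem cited in relative form, your argument goes through.
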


\begin{proof}[Proof of Theorem \ref{thm:generalized_Q_symmetric_optimizers}] It suffices to show that if $X$ achieves equality in the Viterbo inequality, then it is symplectomorphic to a ball since the other direction is trivial. 

\vspace{3pt}

Thus suppose that $X$ achieves equality and let $F$ denote the corresponding Finsler metric on $S^2$, which descends to a Finsler metric $\bar{F}$ on $\mathbb{RP}^2$. Then it follows from (\ref{eq:Q_invariant_vol_bound_sandwich}) that $\bar{F}$ satisfies
\[
2 \cdot \on{sys}(\mathbb{RP}^2,\bar{F})^2 = \on{vol}(\mathbb{RP}^2,\bar{F}) \qquad\text{and}\qquad \sys(X) = 4 \cdot \on{sys}(\mathbb{RP}^2,\bar{F})
\]
Then Theorem \ref{thm:ivanov_pu} states that $\bar{F}$ is Zoll, so that every point is contained in a closed geodesic of minimal period. Moreover, every closed Reeb orbit on the unit cosphere bundle of $\bar{F}$ must lift under the map $\bar{\pi}$ in (\ref{eq:Q_invariant_sys_projection}) to a single orbit of period $\sys(X)$ since otherwise $\sys(X)$ would be strictly less than $4\on{sys}(\mathbb{RP}^2,\bar{F})$. It follows that every point in $\partial X$ is contained in a closed Reeb orbit of minimal period, or equivalently that $\partial X$ is Zoll. Proposition \ref{prop:zoll_iff_ball} yields the result. 
\end{proof}

\begin{remark}[Non-Smooth Domains] Due to the Hausdorff continuity of the symplectic systole among convex sets, we can apply an approximation argument to acquire the systolic inequality in Theorem \ref{thm:generalized_Q_symetric_Viterbo} for convex sets satisfying the appropriate symmetries that are not necessarily smooth. The same remark applies to Theorem \ref{thm:I_invariant_systolic} and Theorem \ref{thm:conditional_I_Viterbo} below.
\end{remark}

\subsection{$I$-Symmetric And Centrally Symmetric Domains.} We next discuss applications of the quaternionic covering to systolic bounds for domains invariant under smaller symmetry groups.
\begin{thm}[$I$-Symmetric Systolic Bound] \label{thm:I_invariant_systolic} Let \(X \subset \mathbb{H}\) be an $I$-invariant smooth domain that is convex along $J$-complex lines. Then $X$ satisfies the following systolic inequality.
\[
\frac{\sys(X)}{\sys(B)} < \big(\frac{3}{2}\big)^{1/2} \cdot \big(\frac{\on{vol}(X)}{\on{vol}(B)}\big)^{1/2}
\]
\end{thm}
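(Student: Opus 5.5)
Follow the proof of Theorem \ref{thm:generalized_Q_symetric_Viterbo} step by step, but replace the Ivanov-Pu inequality with Proposition \ref{prop:non_reversible_Pu_sota}. Then use the squared structure of the fibers to make the final inequality strict.

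\textbf{Reduction to $\mathbb{RP}^2$.} Let $F$ be the Finsler metric on $S^2$ given by Corollary \ref{cor:quaternion_cover}. Since $X$ is $I$-invariant, $F$ descends to a (possibly non-reversible) Finsler metric $\bar{F}$ on $\mathbb{RP}^2$. Composing $\pi$ with the double cover $S^*_FS^2 \to S^*_{\bar F}\mathbb{RP}^2$ gives a strict contact $4$-sheeted covering $\bar\pi:\partial X \to S^*_{\bar F}\mathbb{RP}^2$, exactly as in (\ref{eq:Q_invariant_sys_projection}). Take a systolic geodesic of $\bar F$, which exists by Theorem \ref{thm:sys_spectral}. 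It is a closed Reeb orbit, and its preimage in $\partial X$ consists of closed Reeb orbits of total period $4\on{sys}(\mathbb{RP}^2,\bar F)$. Hence $\sys(X)\le 4\on{sys}(\mathbb{RP}^2,\bar F)$. Likewise, $\on{vol}(X)=4\volHT(\mathbb{RP}^2,\bar F)$, as in (\ref{eq:Q_invariant_vol_bound}). Reversibility was never used in these two steps.

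\textbf{Non-sharp Pu.} Proposition \ref{prop:non_reversible_Pu_sota} gives
\[
\sys(X)^2\le 16\on{sys}(\mathbb{RP}^2,\bar F)^2\le 12\volHT(\mathbb{RP}^2,\bar F)=3\on{vol}(X).
\]
Since $\sys(B)^2/\on{vol}(B)=2$, this is the claimed inequality with $\le$ in place of $<$.

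\textbf{Strictness (main obstacle).} Suppose equality held. Then equality holds in Proposition \ref{prop:non_reversible_Pu_sota}, so by its equality clause (which rests on Lemma \ref{lem:reversibilization_volume}) the unit codisk fibers of $\bar F$ are triangles at every point. On the other hand, Theorem \ref{thm:quaternion_cover} says the fibers of $D^*_FS^2$ are squared. By Lemma \ref{lem:square_stays_convex}, squared domains are strictly convex, so they cannot be triangles. This contradiction forces strict inequality.

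The delicate point is whether the equality clause of Lemma \ref{lem:reversibilization_volume} is genuinely pointwise. Equality of the integrated volumes together with the fiberwise Rogers-Shephard inequality forces $\on{vol}(\bar K_x)=\tfrac32\on{vol}(K_x)$ for almost every $x$. By continuity in $x$ this then holds for every $x$. A single non-triangular fiber already makes the chain strict, provided strictness in Ivanov-Pu is not needed.

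If one wants an explicit constant $C<(3/2)^{1/2}$ rather than bare strict inequality, the argument needs more. One would use a stability form of Rogers-Shephard: a squared body is quantitatively far from a triangle, so $\on{vol}(K-K)\le(6-\delta)\on{vol}(K)$ with some uniform $\delta>0$. This is where the real work lies. For the strict inequality as stated, the contradiction argument above suffices.
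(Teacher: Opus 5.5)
Your proposal is correct and follows the paper's proof essentially step for step. You reduce to $\bar F$ on $\mathbb{RP}^2$ via the $4$-sheeted contact cover, apply Proposition \ref{prop:non_reversible_Pu_sota} to get $\sys(X)^2 \le 3\on{vol}(X)$, and exclude equality because the equality case forces triangular codisk fibers, which cannot occur for the squared (hence strictly convex) fibers of Theorem \ref{thm:quaternion_cover}. Your observation that equality of the integrated volumes in Lemma \ref{lem:reversibilization_volume} forces fiberwise equality by continuity fills in a detail the paper leaves implicit; for the explicit constant $C<(3/2)^{1/2}$, the paper uses a compactness argument (Theorem \ref{thm:I_invariant_systolic_convex}) rather than a quantitative Rogers--Shephard bound.
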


\begin{proof} Recall that $I$-invariant domains descend to possibly non-reversible Finsler metrics on $\RP^2$. By applying Proposition \ref{prop:non_reversible_Pu_sota} and an identical argument to the proof of Theorem \ref{thm:generalized_Q_symetric_Viterbo}, we acquire the following chain of inequalities.
\begin{equation} \label{eq:I_invariant_vol_bound_sandwich}
\sys(X)^2 \le 16 \cdot \sys(\mathbb{RP}^2,\bar{F})^2 < 12 \cdot \volHT(\mathbb{RP}^2,\bar{F}) = 3 \cdot \on{vol}(X)\end{equation}
where the middle inequality follows from the non-sharp systolic inequality in Proposition \ref{prop:non_reversible_Pu_sota} for non-reversible Finsler metrics on $\mathbb{RP}^2$. Note that it must be a strict inequality, since the equality case is obtained on metrics whose fibers are not smooth nor strongly convex (triangles). Since $\sys(B)^2 = 2\on{vol}(B)$ for any metric ball $B$, the above inequality is equivalent to the strict inequality. \end{proof}

We note that this result gives a state-of-the-art constant for Viterbo's inequality in the $I$-symmetric setting. The previous best estimate (cf. \cite{gutt2022examples}) is given by the following simple argument, included here for the convenience of the reader.

\begin{thm}[\cite{gutt2022examples}] \label{thm:I_invarian_worse} Let $I$ be the standard complex structure on $\C^n$ and let $X$ be an $I$-invariant convex domain in $\mathbb{C}^n$. Then
\[
c_{EHZ}(X)^n \le \frac{4^n n!}{\pi^n} \cdot \on{vol}(X)
\]
Moreover, there exists a metric ball centered at the origin $B \subset X$ with Ekeland-Hofer-Zehnder capacity
\[
c_{EHZ}(B) \ge (\frac{\pi}{4})^n \cdot c_{EHZ}(X)
\]\end{thm}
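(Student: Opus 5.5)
The plan is to follow Ostrover's argument as presented in \cite{gutt2022examples}. First I find a large ball inside $X$ by exploiting $I$-invariance. Then I use monotonicity of $c_{EHZ}$ and the fact that $X$ is contained in a symplectic cylinder over a strip.

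\textbf{Step 1 (inscribed ball).} Let $r$ be the inradius of $X$ about the origin. Since $X$ is $I$-invariant it is invariant under $z\mapsto -z = I^2 z$, so it is centrally symmetric and convex. Hence the largest ball centered at $0$ contained in $X$ has radius $r=\min_{\partial X}|z|$, attained at some $p\in\partial X$. The tangent hyperplane $H=\{\langle z,p\rangle = |p|^2\}$ supports $X$ at $p$. By central symmetry, $X$ is contained in the slab $S=\{|\langle z,p/|p|\rangle|\le r\}$. By $I$-invariance, $X$ is also contained in the rotated slab $IS=\{|\langle z,Ip/|p|\rangle|\le r\}$. Therefore $X$ lies in $S\cap IS$, which is the product of the square $\{|x|,|y|\le r\}$ in the complex line $\C p$ with $(\C p)^\perp$. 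This set is a symplectic cylinder over a square of area $4r^2$. Monotonicity and the cylinder computation give
\[
c_{EHZ}(X)\le 4r^2 = \frac{4}{\pi}\,\pi r^2=\frac{4}{\pi}\,c_{EHZ}(B_r).
\]
Since $n\ge 1$, this yields $c_{EHZ}(B_r)\ge \frac{\pi}{4}c_{EHZ}(X)\ge(\frac{\pi}{4})^n c_{EHZ}(X)$, which is the second claim. (The proof in fact gives the stronger constant $\pi/4$.)

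\textbf{Step 2 (volume bound).} Since $B_r\subset X$, we have $\on{vol}(X)\ge \on{vol}(B_r)=\frac{\pi^n r^{2n}}{n!}$. Combining this with $c_{EHZ}(X)\le 4r^2$ gives
\[
c_{EHZ}(X)^n\le 4^n r^{2n}\le \frac{4^n n!}{\pi^n}\on{vol}(X).
\]

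\textbf{Main obstacle.} The only delicate point is justifying that $c_{EHZ}$ of the cylinder $Q\times\C^{n-1}$ equals the area of $Q$ for the square $Q$. For this I would use the standard fact that $c_{EHZ}$ of a product $K\times\C^{n-1}$ with $K$ a planar convex domain is $\on{area}(K)$, together with monotonicity of $c_{EHZ}$ on convex sets. Non-smoothness is handled by approximating by smooth convex bodies and using continuity. I also need to check that the supporting hyperplane at a nearest point $p$ is orthogonal to $p$. This holds because the ball $B_r$ and $X$ touch at $p$, so the supporting hyperplanes coincide.
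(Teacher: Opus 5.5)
Your proposal is correct and follows essentially the same argument as the paper. Both inscribe the ball of radius $r=\min_{\partial X}|z|$, trap $X$ via the supporting half-space at the nearest point and its images under $-1$ and $I$ inside the cylinder over a square of area $4r^2$, and chain monotonicity of $c_{EHZ}$ with $\on{vol}(X)\ge\on{vol}(B_r)$. Your remark that the second claim holds with the stronger constant $\pi/4$ also follows from the paper's chain, which gives $c_{EHZ}(B)^n\ge(\pi/4)^n c_{EHZ}(X)^n$.
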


\begin{proof} Let $z$ be the boundary point of $X$ that is closest to the origin. Then the ball $B$ of radius $|z|$ is contained in $X$ and intersects $\partial X$ at $z$. In particular, $X$ is contained in the half-space
\[
H = \{w \in \C^n \; : \; \langle w,z\rangle \le |z|^2\} 
\]
After applying a unitary transformation, we may assume that $z = (r,0\dots 0)  \in \C^n$ where $r = |z|$ and thus $H$ is the half-space of vectors $w = (w_1,\dots,w_n)$ with $\on{Re}(w_1) \le r$. Since $X$ is $i$-invariant and $X \subset H$, we have that
\[
X \subset H \cap iH \cap -H \cap -iH = Z \qquad\text{where} \qquad Z = S \times \C^{n-1}
\]
Here $S \subset \C$ is the square consisting of points with real and imaginary parts in the interval $[-r,r]$. Now we simply note that the Ekeland-Hofer-Zehnder capacity of $B$ and $Z$ are given by
\[
c_{EHZ}(B) = \pi r^2 \qquad\text{and}\qquad c_{EHZ}(Z) = 4r^2 
\]
Since the Ekeland-Hofer-Zehnder capacity is monotonic with respect to embeddings, we have
\[
n! \cdot \on{vol}(X) \ge n! \cdot \on{vol}(B) \ge c_{EHZ}(B)^n = (\frac{\pi}{4})^n \cdot c_{EHZ}(Z)^n \ge (\frac{\pi}{4})^n \cdot c_{EHZ}(X)^n
\]
Rearranging this inequality gives the desired volume bound. Moreover, the above sequence of inequalities contains the bound via the Ekeland-Hofer-Zehnder capacity of the ball $B$. \end{proof}

In the convex (and not necessarily smooth) case, we can improve the constant in Theorem \ref{thm:I_invariant_systolic} slightly using a compactness argument. This recovers Theorem \ref{thm:intro_I_invariant_systolic} from the introduction.

\begin{thm}[$I$-Symmetric Systolic Bound, Convex Case] \label{thm:I_invariant_systolic_convex} Let \(X \subset \mathbb{H}\) be an $I$-invariant convex domain. Then $X$ satisfies the following systolic inequality.
\[
\frac{\sys(X)}{\sys(B)} \le C \cdot \big(\frac{\on{vol}(X)}{\on{vol}(B)}\big)^{1/2} \qquad\text{where $C < \big(\frac{3}{2}\big)^{1/2}$.}
\]
\end{thm}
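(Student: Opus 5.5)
The plan is a compactness argument over normalized $I$-invariant convex bodies, combined with a strict inequality at each individual body. We normalize so that $\on{vol}(X) = \on{vol}(B)$ and work with the scale-invariant ratio
\[
\Phi(X) = \frac{\sys(X)^2}{\on{vol}(X)}.
\]
This ratio is continuous in the Hausdorff topology on convex bodies, because both the EHZ capacity and the volume are Hausdorff continuous.

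\textbf{Compactness.} First we show that the space of $I$-invariant convex bodies, modulo linear symplectic maps commuting with right multiplication by $I$, has a compact fundamental domain. An $I$-invariant convex body is centrally symmetric. Such maps include complex-linear unitary maps and the complex-linear maps in $Sp(4)$. By a John ellipsoid argument adapted to this group, we will put $X$ into a position where $X$ is sandwiched between $rB$ and $RB$ with $r,R$ depending only on the dimension. The first choice is to use the $I$-invariant John ellipsoid, which is unique and hence $I$-invariant. Such an ellipsoid is a complex ellipsoid, so it can be mapped to a polydisk-type normal form $E(a,b)$ by a $\C$-linear symplectic map. An alternative is the ball argument of Theorem \ref{thm:I_invarian_worse}, which gives $\sys(X) \le (4/\pi)\sys(B_X)$ directly. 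Blaschke selection then makes the normalized set of $I$-invariant convex bodies with $\Phi$ bounded below sequentially compact. Hence $\sup \Phi$ is attained by some $I$-invariant convex body $X_0$, possibly non-smooth.

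\textbf{Strict inequality at the maximizer.} Second, we show that $\Phi(X_0) < 3$, i.e. the inequality of Theorem \ref{thm:I_invariant_systolic} is strict even for non-smooth convex $X_0$. Every convex $I$-invariant body is convex along $J$-complex lines. By Remark \ref{rmk:lipschitz_metrics}, it therefore yields a Lipschitz Finsler metric $\bar F$ on $\RP^2$ whose dual unit balls are squared. The chain of inequalities in the proof of Theorem \ref{thm:I_invariant_systolic} still holds, after approximating by smooth bodies and using continuity of the systole (Lemma \ref{lem:sys_continuity}) and of the volume. This gives
\[
\sys(X_0)^2 \le 16\,\on{sys}(\RP^2,\bar F)^2 \le 8\,\volHT(\RP^2,\bar{\bar F}) \le 12\,\volHT(\RP^2,\bar F) = 3\on{vol}(X_0),
\]
where $\bar{\bar F}$ is the reversibilization. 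By Lemma \ref{lem:reversibilization_volume}, equality in the last step forces the unit codisk fibers to be triangles almost everywhere. A triangle is not a squared domain, since squared bodies are strongly convex by Lemma \ref{lem:square_stays_convex}. So the last step is strict, because the fiberwise Rogers--Shephard defect is positive on a set of positive measure. Thus $\Phi(X_0) < 3$. Setting $C^2 = \Phi(X_0)/\Phi(B)$ gives $C < (3/2)^{1/2}$, and the inequality holds for all $X$ by maximality.

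\textbf{Main obstacle.} We expect the compactness step to be the hardest. Non-compactness could arise from degenerating sequences such as very thin bodies. These need to be shown to have small $\Phi$, or to be renormalizable by an $I$-commuting symplectic map. I would first try to pass to the $I$-invariant John ellipsoid and the fact that such an ellipsoid is symplectically equivalent to some $E(a,b)$ via a $\C$-linear map. For degenerate ratios $a \ll b$, I would bound $\sys(X) \le c_{EHZ}(\sqrt n\, E(a,b)) = n\pi a$. This is small compared with the volume, which is at least $\pi^2ab/2$, so such sequences stay away from the supremum. A secondary subtlety is justifying the strict Rogers--Shephard step for Lipschitz fibers, which needs a quantitative stability version: the squared property gives a uniform distance from triangles.
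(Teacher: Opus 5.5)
Your proposal is correct and is essentially the paper's proof. The supremum of the normalized ratio is attained by Blaschke selection, using exactly your ``alternative'': the origin-centered inscribed ball from Theorem \ref{thm:I_invarian_worse}, whose radius is bounded below once the ratio is, and which together with the volume normalization bounds the diameter. Equality at the maximizer is then excluded because its Lipschitz metric has squared, hence strongly convex, codisk fibers, which cannot be triangles.

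The John-ellipsoid normalization buys nothing, since the linear symplectic maps commuting with $I$ form the compact group $U(2)$; compactness really comes from the lower bound on the ratio, as your $E(a,b)$ estimate shows. Quantitative Rogers--Shephard stability is also unnecessary: for the single body $X_0$ the fiberwise inequality is strict at every point of $\RP^2$, and that already makes the integrated inequality strict.
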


\begin{proof} Consider the following normalized systolic ratio associated to a convex body $X \subset \HH$.
\[
\rho(X) = \frac{\sys(X)}{\sys(B)} \cdot \big(\frac{\on{vol}(B)}{\on{vol}(X)}\big)^{1/2}
\]
This function is continuous on convex bodies (cf. \cite{abbondandolo2018sharp}) and strictly bounded above by $(3/2)^{1/2}$ on $I$-symmetric smooth convex bodies by Theorem \ref{thm:I_invariant_systolic}. Therefore $\rho(X)$ achieves a supremum over all $I$-symmetric convex bodies. Let $X_i$ be a sequence of convex bodies of volume one with
\[\rho(X_i) \to C.\]
By Theorem \ref{thm:I_invarian_worse}, there is a metric ball of capacity bounded below by $C' = (\pi/4)^2 \cdot C - \epsilon$ contained in $X_i$ for some sufficiently small $\epsilon$ and all sufficiently large $i$. By Blaschke's selection theorem, we may pass to a subsequence so that $X_i \to X$ in the Hausdorff topology. Since the volume and the systole (or equivalently, Ekeland-Hofer-Zehnder capacity) are both Hausdorff continuous on convex domains, this implies that $\rho(X) = C$. 

\vspace{3pt}

On the other hand, let $F$ be the Lipschitz Finsler metric associated to $X$ as in Remark \ref{rmk:lipschitz_metrics}. The argument in Theorem \ref{thm:I_invariant_systolic} implies that $C = \rho(X) \le (3/2)^{1/2}$ with equality if and only if the unit codisk fibers of $D^*_F\mathbb{RP}^2$ are triangles. However, $F^*$ is a squared metric on $T^*\mathbb{RP}^2$ by Remark \ref{rmk:lipschitz_metrics}, and thus strongly convex by Lemma \ref{lem:square_stays_convex}, so the unit codisks cannot be triangles. Therefore $C$ is less than $(3/2)^{1/2}$. \end{proof}

\noindent Similarly, by substituting Conjecture \ref{conj:nonreversible_pu} for the Ivanov-Pu inequality in the arguments in Theorems \ref{thm:generalized_Q_symetric_Viterbo} and \ref{thm:generalized_Q_symmetric_optimizers}, we acquire the following conditional result, which implies Theorem~\ref{thm:intro_conditional} from the introduction.

\begin{thm}[Conditional $I$-Symmetric Viterbo] \label{thm:conditional_I_Viterbo} Assume Conjecture \ref{conj:nonreversible_pu}. Then any $I$-invariant smooth domain $X \subset \HH$ that is convex along $J$-complex lines satisfies the Viterbo inequality, with equality if and only if $X$ is symplectomorphic to the 4-ball. 
\end{thm}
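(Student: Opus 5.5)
The plan is to repeat the proofs of Theorem \ref{thm:generalized_Q_symetric_Viterbo} and Theorem \ref{thm:generalized_Q_symmetric_optimizers} verbatim, replacing the Ivanov-Pu inequality (Theorem \ref{thm:ivanov_pu}) with Conjecture \ref{conj:nonreversible_pu}. Since $X$ is $I$-invariant, smooth, centrally symmetric (because $I^2=-1$) and convex along $J$-complex lines, Corollary \ref{cor:quaternion_cover} produces a smooth Finsler metric $F$ on $S^2$ that is invariant under the antipodal lift. Hence $F$ descends to a smooth, strongly convex, but possibly non-reversible Finsler metric $\bar F$ on $\RP^2$. Composing $\pi$ with the double cover $S^*_FS^2\to S^*_{\bar F}\RP^2$ gives a strict contact $4$-sheeted covering $\bar\pi:\partial X\to S^*_{\bar F}\RP^2$.

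First, the same reasoning as before gives the two bounds
\[
\sys(X)\le 4\,\on{sys}(\RP^2,\bar F) \qquad\text{and}\qquad \on{vol}(X)=4\,\volHT(\RP^2,\bar F).
\]
For the systole bound, a systolic geodesic of $\bar F$ (Theorem \ref{thm:sys_spectral}) is a closed Reeb orbit on $S^*_{\bar F}\RP^2$. It lifts to closed Reeb orbits on $\partial X$ of total period four times its length, so some component has period at most $4\,\on{sys}$. This step does not use reversibility; orientation enters only through the Reeb direction, which is handled by the Legendre transform. Conjecture \ref{conj:nonreversible_pu} then gives
\[
\sys(X)^2\le 16\,\on{sys}^2\le 8\,\volHT=2\on{vol}(X),
\]
which is the Viterbo inequality because $\sys(B)^2=2\on{vol}(B)$.

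For the equality case, one direction is trivial. For the other, equality forces equality throughout the chain. Since $\bar F$ is smooth and strongly convex, the rigidity part of Conjecture \ref{conj:nonreversible_pu} makes $\bar F$ Zoll. Every closed Reeb orbit on $S^*_{\bar F}\RP^2$ has period $\on{sys}(\RP^2,\bar F)$ and is the lift of a non-contractible geodesic. Each such orbit must lift under $\bar\pi$ to a single closed orbit of period $4\on{sys}=\sys(X)$. Otherwise some lift would have period strictly less than $4\on{sys}=\sys(X)$, which contradicts the definition of $\sys(X)$. Hence the Reeb flow on $\partial X$ is Zoll, and Proposition \ref{prop:zoll_iff_ball} shows $X$ is symplectomorphic to the ball of the same volume.

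The main subtle point is confirming that the Zoll conclusion holds in the sense needed. The conjecture should give that all geodesics of $\bar F$ are closed with common minimal length $\on{sys}$, so every Reeb orbit downstairs has that period, and not merely that every point lies on some systolic geodesic. Note also that the closed Reeb orbits of a Zoll metric on $\RP^2$ are non-contractible, so the lifting argument applies to all of them. I would state explicitly that "Zoll" in Conjecture \ref{conj:nonreversible_pu} is meant in this strong sense, matching Theorem \ref{thm:ivanov_pu}.
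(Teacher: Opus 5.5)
Your proposal is correct and is exactly the paper's argument: the paper proves this theorem by rerunning the proofs of Theorems \ref{thm:generalized_Q_symetric_Viterbo} and \ref{thm:generalized_Q_symmetric_optimizers} through the $4$-sheeted strict contact cover $\partial X \to S^*_{\bar F}\RP^2$, with Conjecture \ref{conj:nonreversible_pu} in place of the Ivanov--Pu inequality. Your extra care in reading ``Zoll'' as ``all geodesics closed with a common minimal period'', and in the period-counting argument for lifts, matches how the paper uses the rigidity statement in the $Q$-symmetric case.
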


Finally, we prove the general systolic inequality satisfied by centrally symmetric domains convex along $J$-complex lines discussed in the introduction (Theorem \ref{thm:intro_sys_ineq_fiberwise_convex}). 

\begin{thm}[Theorem \ref{thm:intro_sys_ineq_fiberwise_convex}] \label{thm:sys_ineq_fiberwise_convex} Let $X$ be a smooth, centrally symmetric domain in $\HH$ that is convex along $J$-complex lines. Then the length $\sys(X)$ of the shortest Reeb orbit on $\partial X$ satisfies
\[
\frac{\sys(X)}{\sys(B)} \le C \Big( \frac{\on{vol}(X)}{\on{vol}(B)}\Big)^{1/2} \qquad\text{where}\qquad C \le  8 + \frac{11\sqrt{3}}{2}
\]
\end{thm}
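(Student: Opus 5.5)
The plan is to follow the template of Theorem \ref{thm:generalized_Q_symetric_Viterbo}, replacing the Ivanov-Pu inequality by Balacheff's length-area bound (Theorem \ref{thm:shortest_geodesic_ineq}) on $S^2$. First, let $F$ be the Finsler metric on $S^2$ produced by Corollary \ref{cor:quaternion_cover}, so that there is a two-sheeted strict contact covering $\pi:\partial X \to S^*_FS^2$ with $\pi^*\lambda_{T^*S^2} = \lambda_\HH|_{\partial X}$. Since $X$ is only centrally symmetric, $F$ need not be reversible and need not descend to $\mathbb{RP}^2$. This is why the statement uses the shortest geodesic length $\on{sg}$ rather than the systole.

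Next I compare volumes. As in (\ref{eq:Q_invariant_vol_bound}), the contact volume of $\partial X$ equals $\on{vol}(X)$, and the contact volume of $S^*_FS^2$ equals $\volHT(S^2,F)$. Because the covering has two sheets, this gives
\[
\on{vol}(X) = 2\cdot \volHT(S^2,F).
\]

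Then I compare the shortest orbit with the shortest geodesic. Let $\Gamma$ be a closed $F$-geodesic of length $\on{sg}(S^2,F)$, which exists by compactness. Its cotangent lift is a closed Reeb orbit on $S^*_FS^2$ of period $\on{sg}(S^2,F)$, since Reeb flow on the unit cosphere bundle is the cogeodesic flow. The preimage of this orbit under $\pi$ is either a single closed Reeb orbit of period $2\,\on{sg}(S^2,F)$ or two orbits of period $\on{sg}(S^2,F)$ each. In both cases $\partial X$ carries a closed Reeb orbit of period at most $2\,\on{sg}(S^2,F)$, and hence
\[
\sys(X) \le 2\,\on{sg}(S^2,F).
\]
Here I use that for smooth star-shaped $X$ the symplectic systole is the minimal period of a closed characteristic.

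Finally I combine these with Theorem \ref{thm:shortest_geodesic_ineq}, which gives $\on{sg}(S^2,F)^2 \le C_0\,\volHT(S^2,F)$ with $C_0 = (8+\tfrac{11\sqrt3}{2})^2$. This yields
\[
\sys(X)^2 \le 4C_0\,\volHT(S^2,F) = 2C_0\,\on{vol}(X).
\]
Using $\sys(B)^2 = 2\,\on{vol}(B)$, this becomes
\[
\Big(\frac{\sys(X)}{\sys(B)}\Big)^2 \le C_0 \cdot \frac{\on{vol}(X)}{\on{vol}(B)},
\]
which is the claim with $C = \sqrt{C_0}$.

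The main obstacle I expect is bookkeeping of normalizations. I need to check that the factors of $2$ from the covering cancel exactly against $\sys(B)^2/\on{vol}(B) = 2$. I also need to check that the $\pi$-normalization of $\volHT$ noted after Theorem \ref{thm:shortest_geodesic_ineq} matches the convention used here; the round-metric check, $\tfrac1{16}G_{\on{std}}$ corresponding to $B$, should confirm this. A smaller point is whether Balacheff's theorem applies to $F$, which requires $F$ to be a smooth strongly convex Finsler metric; Theorem \ref{thm:quaternion_cover} supplies exactly this via the squared fibers.
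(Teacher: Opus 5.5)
Your proposal is correct and follows the paper's proof essentially step for step. You pass to the Finsler metric $F$ on $S^2$ via Corollary~\ref{cor:quaternion_cover}, use the two-sheeted covering to get $\on{vol}(X) = 2\volHT(S^2,F)$ and $\sys(X) \le 2\on{sg}(S^2,F)$, and then combine Balacheff's bound (Theorem~\ref{thm:shortest_geodesic_ineq}) with $\sys(B)^2/\on{vol}(B) = 2$.
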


\begin{proof} Let $F$ be the corresponding Finsler metric to $X$ on $S^2$ via Corollary \ref{cor:quaternion_cover}. By Theorem \ref{thm:shortest_geodesic_ineq}, we have the following inequality for the shortest geodesic length.
\[
\on{sg}(S^2,F)^2  \le C^2 \cdot \volHT(S^2,F) \qquad\text{where}\qquad C \le 8 + \frac{11\sqrt{3}}{2}
\]
The shortest geodesic of $F$ lifts to a closed Reeb orbit on $\partial X$ of length at most $2 \cdot \on{sg}(S^2,F)$ and the volume of $X$ is twice the Holmes-Thompson volume of $F$ by our convention. Therefore
\[
\sys(X)^2 \le 4 \cdot \on{sg}(S^2,F)^2 \le 4C^2 \cdot \volHT(S^2,F) = 2C^2 \cdot \on{vol}(X)
\]
Since $\sys(B)^2/\on{vol}(B) = 2$ for the 4-dimensional symplectic ball $B$, this yields the result.\end{proof}

\subsection{Minimizing Reeb Orbits And Hopfness} We conclude this section by proving our main results on Hopfness of links of orbits.  We start by proving the following correspondence between minimal period Reeb orbits and systolic (i.e. non-contractible and length minimizing) geodesics.

\begin{thm}[Minimizer Bijection] \label{thm:minimizer_bijection} Let $X$ be a $Q$-symmetric smooth, uniformly convex domain in $\HH$. Then $\Gamma \subset \partial X$ is a minimal period Reeb orbit if and only if it is the lift of a systolic geodesic
\[
\gamma \subset \mathbb{RP}^2 \quad\text{of the associated Finsler metric $F$ under the projection}\quad \rho:\partial X \to S^*_F\mathbb{RP}^2 \to \mathbb{RP}^2.
\]
\end{thm}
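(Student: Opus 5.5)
Both directions go through the $4$-sheeted strict contact covering $\bar\pi:\partial X\to S^*_{F}\mathbb{RP}^2$ from the proof of Theorem \ref{thm:generalized_Q_symetric_Viterbo}, together with the standard fact that Reeb orbits on a unit cosphere bundle are exactly the cotangent lifts of unit speed closed geodesics, with Reeb period equal to $F$-length. Uniform convexity of $X$ implies convexity along $J$-complex lines, so $F$ is a smooth reversible Finsler metric on $\mathbb{RP}^2$. The key quantitative input is the equality
\[
\sys(X) = 4\cdot\on{sys}(\mathbb{RP}^2,F),
\]
where only the inequality $\le$ is recorded in (\ref{eq:Q_invariant_sys_bound}). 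Once this equality holds, both directions reduce to bookkeeping of periods under the covering.

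\textbf{Reverse inequality.} Let $\Gamma\subset\partial X$ be a closed Reeb orbit of period $T$. Then $\bar\pi(\Gamma)$ is the cotangent lift of a closed geodesic $\gamma$ of length $\ell$ with $T=k\ell$ and $k\in\{1,2,3,4\}$, where $k$ is the number of sheets traversed. I would split into cases according to the class of $\gamma$.
\begin{itemize}
\item If $\gamma$ is non-contractible, then $\ell\ge \on{sys}(\mathbb{RP}^2,F)$, and we must show $k=4$. The main obstacle is ruling out $k<4$, i.e.\ showing that the loop in $S^*_F\mathbb{RP}^2$ does not close up upstairs earlier. I would compute the monodromy of the $Q$-covering along the lift of a generator. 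The deck group acts as $Q$ modulo a subgroup, with $\pm1$ acting trivially downstairs on $S^*S^2$ and $I$ covering the antipodal map. A non-contractible loop lifts to a path in $S^*S^2$ from $(x,p)$ to $(-x,-p)$, i.e.\ it has monodromy $I$. Then the loop in $S^*S^2$ that traverses $\gamma$ twice has monodromy in $\{\pm1\}$ for the $2$-sheeted cover $\pi$. It remains to show that this monodromy is $-1$. For this, use the round case $B$ (a great circle traversed twice gives a Hopf fibre, and the Hopf fibres upstairs have period $4\cdot\on{sys}$), and then deform $F$ to the round metric: monodromy is a homotopy invariant of the free loop class in $S^*\mathbb{RP}^2$.
\item If $\gamma$ is contractible, then its lift to $S^*S^2$ is a closed geodesic of $S^2$. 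Here I would bound its length from below by $2\on{sys}$, and hence bound $T\ge 2\ell\ge 4\on{sys}$. Alternatively, one can use the same monodromy computation: the class of a contractible geodesic's lift in $\pi_1(S^*\mathbb{RP}^2)\cong\Z/4$ is $2$, which is again nontrivial for $\pi$.
\end{itemize}
Honestly, the lower bound $\ell\ge 2\on{sys}$ in the contractible case is the delicate point. A cleaner alternative would be to avoid it and use only convexity: for uniformly convex $X$, Clarke duality gives that minimal orbits have Conley--Zehnder index $3$. Correspondingly, geodesics which are contractible in $\mathbb{RP}^2$, or which are iterates, have Morse index too large. I would try the covering and monodromy argument first and fall back on index considerations if it fails.

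\textbf{Conclusion.} Given the equality, the two directions follow.
\begin{itemize}
\item If $\gamma$ is systolic, then by the monodromy computation its lift is a single Reeb orbit of period $4\on{sys}(\mathbb{RP}^2,F)=\sys(X)$, hence minimal.
\item Conversely, if $\Gamma$ is minimal, then its projection $\gamma$ must be non-contractible: a contractible $\gamma$ would give period $>\sys(X)$ by the case analysis above. So $\ell\ge\on{sys}$, and $4\ell=\sys(X)=4\on{sys}$ forces $\gamma$ to be systolic.
\end{itemize}
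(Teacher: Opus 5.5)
\textbf{Verdict: there is a genuine gap.} Your proposal does not establish the equality $\sys(X)=4\,\sys(\mathbb{RP}^2,F)$ on which both directions rest. The failure is in the contractible case.

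The non-contractible case is correct in outcome. The cleanest reason is that $\pi_1(S^*\mathbb{RP}^2)\cong\mathbb{Z}/4$ with fiber class $2$. Hence any loop lying over a non-contractible loop is a generator, and the orbit upstairs is $I$-invariant of period $4\ell$; this is exactly the correspondence the paper uses. (The deck group is $\langle I\rangle\cong\mathbb{Z}/4$, so $k\in\{1,2,4\}$.)

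The contractible case breaks down in four places.
\begin{itemize}
\item Your claim that the lift of a contractible geodesic always has class $2$ is false. The class of the tangent lift in $\pi_1(S^*S^2)\cong\mathbb{Z}/2$ is a regular-homotopy invariant, and a figure-eight-type closed geodesic (rotation number $0$) has null-homotopic lift. Then $k=1$, the preimage is four distinct Reeb orbits of period $\ell$, and you would need $\ell\ge 4\,\sys(\mathbb{RP}^2,F)$.
\item The bound $\ell\ge 2\,\sys(\mathbb{RP}^2,F)$ for contractible geodesics is not a Finsler-geometric fact. The waist of a thin finger gives contractible closed geodesics far shorter than the systole. So the bound would have to come from convexity of $X$, yet your covering and monodromy argument uses nothing beyond the existence of $F$. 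If it worked, it would also prove the statement for $Q$-symmetric domains merely convex along $J$-complex lines. The paper flags that case as open, precisely because a minimal orbit might fail to be $I$-symmetric there.
\item Even granting $\ell\ge 2\,\sys$, you would only get $T\ge \sys(X)$. Your converse direction needs a strict inequality to exclude a minimal orbit lying over a contractible geodesic.
\item The index-theoretic fallback is only asserted, not carried out.
\end{itemize}

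\textbf{Missing ingredient.} What you need is the Akopyan--Karasev theorem (Theorem \ref{thm:karasev}). It says that on a smooth uniformly convex body invariant under multiplication by a primitive $m$-th root of unity $u$, every closed characteristic of minimal period is itself $u$-invariant. This is where uniform convexity genuinely enters.

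\textbf{How it closes the argument.} Apply the theorem with $u=I$, which has order $4$. Every minimal $\Gamma$ is then $I$-invariant. Equivalently, $[\bar\pi(\Gamma)]$ generates $\mathbb{Z}/4$, so $\Gamma$ lies over a non-contractible geodesic and has period $4\ell$. Therefore minimizing the period over all closed Reeb orbits is the same as minimizing over $I$-invariant ones, which in turn is the same as $\gamma$ being systolic. This gives $\sys(X)=4\,\sys(\mathbb{RP}^2,F)$ and both directions of the bijection at once.
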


\noindent The proof of this result will require the following theorem of Akopyan-Karasev \cite[Thm 2.3]{akopyan2017estimating}.

\begin{thm} \cite{akopyan2017estimating} \label{thm:karasev}  Let $X \subset \mathbb{C}^n$ be a smooth and uniformly convex body such that $uX = X$ for a primitive $m$th root of unity $u$. Then any closed characteristic of minimal period on the boundary $\partial X$ is invariant under multiplication by $u$.
\end{thm}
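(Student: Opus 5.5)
The plan is to pass to the Clarke dual variational problem, where minimal-period closed characteristics are exactly the minimizers of a convex functional on a quadratic constraint set, and to exploit the $\Z/m$ symmetry generated by $u$ combined with a time shift. Let $H = h_X^2$, where $h_X$ is the gauge of $X$. Then $H$ is smooth away from the origin, $2$-homogeneous and uniformly convex, and its Legendre dual $H^*$ has the same properties. Because $u$ is unitary, hence symplectic and $J$-linear, and $uX = X$, both $H$ and $H^*$ are $u$-invariant. On the space $E$ of $1$-periodic $W^{1,2}$ loops $x:\R/\Z \to \C^n$ with mean zero, consider
\[
\mathcal{I}(x) = \int_0^1 H^*(-J\dot x)\,dt \qquad\text{and}\qquad \mathcal{A}(x) = \frac{1}{2}\int_0^1 \langle -J\dot x, x\rangle\, dt .
\]
By Clarke duality, minimizers of $\mathcal{I}$ on $\{\mathcal{A}=1\}$ correspond, after translation and rescaling, exactly to closed characteristics of minimal action on $\partial X$. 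The minimum value is the systole up to a fixed normalization.

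Next I introduce the operator $(Rx)(t) = u^{-1}x(t+1/m)$. It is unitary on $E$, it satisfies $R^m = \mathrm{id}$, and both $\mathcal{I}$ and $\mathcal{A}$ are $R$-invariant. Decompose $x = \sum_{j} x_j$ into eigencomponents $x_j = \frac{1}{m}\sum_k \omega^{-jk}R^k x$ with $\omega = e^{2\pi i/m}$. Since $\mathcal{A}$ is a quadratic form commuting with $R$, it splits as $\mathcal{A}(x) = \sum_j \mathcal{A}(x_j)$. The conclusion to aim for is that a minimizer $x$ is $R$-invariant, that is, $x = x_0$. Indeed $Rx = x$ says $u\,x(t) = x(t+1/m)$, which means the characteristic $\Gamma$ satisfies $u\Gamma = \Gamma$.

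The key step is to compare $x$ with its symmetrizations. I would first try the averaged loop $\bar x = \frac{1}{m}\sum_k R^k x = x_0$. Convexity and invariance of $\mathcal{I}$ give $\mathcal{I}(x_0) \le \mathcal{I}(x)$, with equality only if all $R^k x$ have the same derivative (strict convexity of $H^*$), i.e. $x = Rx$. This is not sufficient on its own, because one also needs $\mathcal{A}(x_0) \ge \mathcal{A}(x)$, and this can fail when the other eigencomponents carry positive action. To handle this, I would exploit the $2$-homogeneity of $H^*$ more carefully. I would also bring in the twisted averages $\frac{1}{m}\sum_k \omega^{-jk}R^k x$, viewed as loops in their own right: these are again mean-zero loops with the symmetry $Rx_j = \omega^j x_j$, and after the substitution $t \mapsto$ a suitable reparametrization they become $R$-invariant loops of a rescaled problem. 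The hope is to prove $\mathcal{I}(x) \ge \sum_j \mathcal{I}(x_j)$. Convexity of $H^*$ only gives this kind of inequality in one direction, and Jensen applied separately to each eigencomponent is not obviously enough. Granting the inequality, the choice of $j$ with the best ratio $\mathcal{I}(x_j)/\mathcal{A}(x_j)$ would give a competitor at least as good as $x$. The equality case in strict convexity would then force $x$ to be a single eigencomponent. Finally, a loop with $Rx = \omega^j x$ for $j \neq 0$ and minimal action would have to be a multiple cover or have action strictly larger than the systole by the index/minimality argument, which forces $j = 0$.

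I expect this step to be the main obstacle: controlling the action of the symmetrized competitor so that the superadditivity $\mathcal{I}(x) \ge \sum_j \mathcal{I}(x_j)$, which holds for quadratic $H^*$, survives for a merely convex $2$-homogeneous $H^*$. If the eigencomponent approach fails, the fallback is an index argument. Minimal-action characteristics of a uniformly convex body have minimal Ekeland index (Conley–Zehnder index $n+1$), and the set of Clarke minimizers is connected under the $S^1$-action. The idea would be to show that $u\Gamma$ and $\Gamma$ lie in the same connected family of minimizers while being fixed by the finite symmetry, and hence coincide.
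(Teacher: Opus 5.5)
The paper gives no proof of this statement; it quotes it from Akopyan--Karasev. Your proposal has a genuine gap, exactly at the step you flag, and that step cannot be completed as planned.

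\textbf{The superadditivity step fails.} The inequality $\mathcal{I}(x)\ge\sum_j\mathcal{I}(x_j)$ is false on loops unless $H^*$ is quadratic. Already for $m=2$, test it on loops with $\dot x_0\equiv\alpha$ on $[0,\tfrac12)$ and $\dot x_1=\pm\beta$ on the two quarters of $[0,\tfrac12)$. Here $H^*$ is even, $\dot x_0$ is anti-periodic and $\dot x_1$ is periodic with period $\tfrac12$. The test gives $\tfrac12\big(H^*(a+b)+H^*(a-b)\big)\ge H^*(a)+H^*(b)$ with $a=-J\alpha$, $b=-J\beta$ arbitrary. Substituting $(a+b,a-b)$ for $(a,b)$ gives the reverse inequality, so this is the parallelogram law. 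By Jordan--von Neumann it forces $\sqrt{H^*}$ to be Euclidean, i.e.\ $X$ to be an ellipsoid. Convexity only yields $\mathcal{I}(x_0)\le\mathcal{I}(x)$, which loses control of $\mathcal{A}$, as you noted.

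\textbf{The eigenloop step is unsupported.} First you must normalize $u=\omega=e^{2\pi i/m}$. For $u=e^{2\pi ik/m}$ the round Hopf circles satisfy $x(t+\tfrac km)=ux(t)$, so $Rx\neq x$. After this normalization, an eigenloop with $Rx_j=\omega^jx_j$ satisfies $x_j(t+\tfrac1m)=\omega^{j+1}x_j(t)$. It is a multiple cover only when $\gcd(j+1,m)>1$. For the remaining $j\ne0$ it is a $u$-invariant loop with a different time shift, and a strict lower bound on its Clarke ratio follows from the theorem rather than being an available input.

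\textbf{The fallback does not work.} Minimal characteristics need not form a connected family; for example, $\{|z_1|^4+|z_2|^4\le1\}$ has two disjoint minimal circles. So nothing joins $\Gamma$ to $u\Gamma$ through minimizers.

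\textbf{The missing idea} is a competitor whose Clarke action equals that of the minimizer \emph{exactly}, so that no convexity inequality is used. This is the $m$-fold version of the symmetrization in Lemma~\ref{lem:chord_bound}, with $\mathcal{C}_X,h_X$ as in Remark~\ref{rmk:clarke_duality}.

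Let $\gamma:\R/\Z\to\partial X$ be a minimizer with $\dot\gamma=AR(\gamma)$ and $A=\sys(X)$. Then $h_X(-I\dot\gamma)$ is constant, and every arc of time-length $\tfrac1m$ has action $\tfrac Am$. For $s\in\R/\Z$ set $w(s)=(1-u)^{-1}\big(\gamma(s+\tfrac1m)-u\gamma(s)\big)$ and $\Gamma_s(t)=u^j\big(\gamma(s+t-\tfrac jm)-w(s)\big)$ for $t\in[\tfrac jm,\tfrac{j+1}m]$. This loop has the following properties:
\begin{itemize}
\item it closes up and satisfies $\Gamma_s(t+\tfrac1m)=u\Gamma_s(t)$;
\item $\mathcal{C}_X(\Gamma_s)=\mathcal{C}_X(\gamma)=A^2$, by $u$-invariance of $h_X$ and translation invariance of $\mathcal{C}_X$;
\item $\mathcal{A}(\Gamma_s)=A-\tfrac m2\,\Omega\big(w(s),\gamma(s+\tfrac1m)-\gamma(s)\big)$.
\end{itemize}
Write $\gamma(t)=\sum_n e^{2\pi int}\hat\gamma_n$, oriented so that the Hopf circles $e^{2\pi it}v$ have positive action. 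A direct computation gives
\[
\int_0^1\mathcal{A}(\Gamma_s)\,ds-A=\frac{m}{2\sin(\pi/m)}\sum_{n\in\Z}\Big(\cos\frac{\pi}{m}-\cos\frac{(2n-1)\pi}{m}\Big)|\hat\gamma_n|^2\ \ge\ 0 .
\]
Minimality gives $\mathcal{A}(\Gamma_s)\le A$ for every $s$. By continuity in $s$, every $\Gamma_s$ is therefore a Clarke minimizer, hence of the form $\lambda x+c$ with $x$ a closed characteristic.

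Injectivity of the Gauss map is where uniform convexity enters. Since $\Gamma_s$ contains a translate of an arc of $\gamma$, it first gives $\lambda=1$ and $\Gamma_s=\gamma(s+\cdot)-w(s)$. Hence $\gamma(\tau+\tfrac1m)=u\gamma(\tau)+(1-u)w(s)$, where both points lie on $\partial X$ with equal outer normals. A second application then gives $w(s)=0$, i.e.\ $\gamma(t+\tfrac1m)=u\gamma(t)$.
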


\begin{proof}[Proof of Theorem \ref{thm:minimizer_bijection}] Let $\Gamma$ be a closed Reeb orbit in $\partial X$ and consider the projection
\[
\pi(\Gamma) \quad\text{under the projection}\quad \pi:\partial X \to S^*_F\mathbb{RP}^2
\]
Note that $\pi(\Gamma)$ is the lift of a geodesic $c_\Gamma \subset \mathbb{RP}^2$. Moreover, $\Gamma$ is invariant under multiplication by $I$ if and only if the homology class $[\pi(\Gamma)]$ is a generator of $H_1(S^*_F\mathbb{RP}^2;\Z) = \Z/4$, which occurs if and only if $c_\Gamma$ is non-contractible. This determines a bijection $\Gamma \mapsto c_\Gamma$ between $I$-invariant closed Reeb orbits on $\partial X$ and non-contractible geodesics of $F$, such that the period of $\Gamma$ is four times the length of $c_\Gamma$.

\vspace{3pt}

Now we argue the result as follows. Let $\Gamma$ be a closed Reeb orbit. Note that $X$ is invariant under a primitive 4th root of unity, namely $I = i$ under the identification $\HH = \C^2$. Thus by Theorem \ref{thm:karasev}, $\Gamma$ minimizes the period if and only if $\Gamma$ is $I$-invariant and $\Gamma$ minimizes the period among $I$-invariant closed Reeb orbits. By the correspondence above, this occurs if and only if $\Gamma$ is the lift of a closed non-contractible geodesic $\gamma = c_\Gamma$ of minimal period. \end{proof}

\begin{remark} We do not know how to prove this result in the more general case of a $Q$-symmetric domain that is convex along $J$-complex lines. The essential issue is that it is (a priori) possible for a minimal Reeb orbit to fail to be symmetric in that case.
\end{remark}

\noindent We are now ready to restate and prove Theorem \ref{thm:intro_hopf_link} from the introduction.

\begin{thm} \label{thm:body_hopf_link} Let $X$ be a $Q$-symmetric smooth, uniformly convex domain in $\HH$ and let $L \subset \partial X$ be any link consisting of minimal period closed Reeb orbits. Then $L$ is Hopf.
\end{thm}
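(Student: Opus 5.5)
We will reduce the statement to the metric side and then invoke the configuration results of Section~\ref{subsec:linking}. Let $F$ be the reversible Finsler metric on $\mathbb{RP}^2$ associated to $X$ (Corollary~\ref{cor:quaternion_cover}), and let $L = \Gamma_1 \cup \dots \cup \Gamma_n$ be the given link of minimal period Reeb orbits.

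\textbf{Step 1: minimal orbits come from embedded systolic geodesics.} By Theorem~\ref{thm:minimizer_bijection}, each $\Gamma_i$ is the lift of a systolic geodesic $\gamma_i \subset \mathbb{RP}^2$. More precisely, $\Gamma_i$ is the $I$-invariant orbit covering the cotangent lift of $\gamma_i$, and the assignment is a bijection. Hence distinct components $\Gamma_i$ give distinct oriented systolic geodesics. The orbits $\Gamma$ and $\Gamma J$ project to the same unoriented curve with opposite orientations, since $J$ acts by $v \mapsto -v$. By Lemma~\ref{lem:embedded_systoles}, each $\gamma_i$ is embedded. By Lemma~\ref{lem:systolic_intersections}, which uses reversibility and hence $J$-invariance, any two geometrically distinct $\gamma_i,\gamma_j$ intersect exactly once. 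This intersection is transverse, because geodesics with a common tangent line coincide, again by reversibility. Each $\gamma_i$ is non-contractible, so $[\gamma_i] = [\RP^1]$. Thus the distinct unoriented curves among the $\gamma_i$ satisfy \eqref{eq:homotopy+intersection_condition}.

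\textbf{Step 2: isotope to great circles.} Let $\eta_i \subset S^2$ be the connected lifts. By Corollary~\ref{cor:loops_in_RP2_are_great_circles}, together with Remark~\ref{rmk:lifting_isotopies} to handle both orientations, the cotangent lifts of the $\eta_i$ (with the orientations that occur) are transversely isotopic in $S^*_FS^2$ to cotangent lifts of great circles. By Lemma~\ref{lem:cot_lift_idep_F}, we may switch to the round metric $\frac{1}{16}G_{\on{std}}$, which corresponds to the ball $B$ by Theorem~\ref{thm:quaternion_cover}.

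\textbf{Step 3: lift back to $\partial X$.} The covering $\pi:\partial X \to S^*_FS^2$ is a strict contact double cover, so transverse isotopies lift to transverse isotopies. Each $\Gamma_i$ is a connected component of the preimage of the cotangent lift $\tilde\eta_i$: it is $I$-invariant and double covers $\tilde\eta_i$. So the isotopy of Step 2 lifts to a transverse isotopy from $L$ to the preimage of cotangent lifts of great circles for the round metric.

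It remains to check that for the ball $B$ these preimages are Hopf fibers. Indeed, $\pi^{-1}$ of the round unit cosphere bundle is $\partial B$, whose Reeb orbits are exactly the Hopf fibers, and cotangent lifts of great circles are round Reeb orbits. We must also identify $\partial X$ and $\partial B$ contactomorphically compatibly with $\pi$. Radial projection does this: both are star-shaped, and a family of $J$-line-convex domains interpolates between them, for example $X_t$ with $F_t$ as in Lemma~\ref{lem:cot_lift_idep_F}.

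\textbf{Main obstacle.} I expect the main difficulty to be this final identification: making the lift of the transverse isotopy through varying domains $X_t$ (or through a fixed radial identification) rigorous, and checking that each component upstairs is a single $I$-invariant orbit rather than two. That count follows from the $\Z/4$ homology argument in Theorem~\ref{thm:minimizer_bijection}, which I would reuse directly.
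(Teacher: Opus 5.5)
Your proposal is correct and follows essentially the paper's argument: Theorem \ref{thm:minimizer_bijection} identifies the components of $L$ with lifts of systolic geodesics, which are embedded and meet pairwise exactly once. Proposition \ref{prop:config_immersely_isotopic} then gives an isotopy without tangencies to great circles, and this lifts to a transverse isotopy; the only cosmetic difference is that you route through $S^2$ and the double cover $\partial X \to S^*_FS^2$ via Corollary \ref{cor:loops_in_RP2_are_great_circles}, where the paper uses the $4$-fold cover to $S^*_F\mathbb{RP}^2$ and the link with both orientations. Your anticipated ``main obstacle'' is not really one: since $\psi(tz) = (\psi_1(z), t^2\psi_2(z))$ for $t>0$, radial projection $\partial X \to \partial B$ intertwines $\pi$ with exactly the fiberwise radial identification used in Lemma \ref{lem:cot_lift_idep_F}, so no interpolating family of $J$-line-convex domains is needed.
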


\begin{proof} Let $F$ be the corresponding Finsler metric to $X$ on $\mathbb{RP}^2$ and consider the contact covering $\pi:\partial X \to S^*_F\mathbb{RP}^2$ given by Corollary \ref{cor:quaternion_cover}. Then
\[
\pi(L) \subset S^*_F\mathbb{RP}^2
\]
is a link whose components are cotangent lifts of systolic geodesics by Theorem \ref{thm:minimizer_bijection}. Since $\pi$ is a contact covering, any transverse isotopy of $\pi(L)$ lifts to a transverse isotopy of $L$. It suffices to show that $\pi(L)$ is transversely isotopic to the cotangent lifts of a union of great circles in $\mathbb{RP}^2$ (i.e. geodesics of the round metric). By Lemma~\ref{lem:cot_lift_idep_F}, the lifts are independent of the metric, and therefore transversely isotopic to lifts of great circles under the round metric, which in turn lift to Hopf fibers under $\pi$. 

\vspace{3pt}

To prove this, let $\gamma_1 \dots \gamma_n$ be the distinct unoriented geodesics of $F$ in $\mathbb{RP}^2$ given by projecting the components of $L$. Note that two components of $L$ might project to the same geodesic if they agree up to reversal of orientation. Let
\[
\Xi \subset S^*_F\mathbb{RP}^2
\]
be the $2n$-component transverse link consisting of the union of the transverse lifts of $\gamma_i$ with both orientations for each $i = 1 \dots n$. Note that $\pi(L)$ is a sub-link of $\Xi$, or equivalently a union of components of $\Xi$. By Lemma \ref{lem:embedded_systoles} and Lemma \ref{lem:systolic_intersections}, the geodesics $\gamma_i$ are embedded and intersect once pairwise. Therefore, by Proposition \ref{prop:config_immersely_isotopic}, this configuration is isotopic without tangencies to a union of great circles in $\mathbb{RP}^2$. This isotopy then lifts to a transverse isotopy of $\Xi$ to a union of cotangent lifts of great circles. This isotopy restricts to a transverse isotopy of $\pi(L)$, which yields the desired isotopy in the first part. 
\end{proof}

We conclude by presenting a new proof of the following result of Abbondandolo-Edtmair-Kang \cite{abbondandolo2024closed} in the centrally symmetric case.

\begin{thm}[Hopf Minimizer] \label{thm:body_Hopf_minimizer} Let $X$ be a centrally symmetric, smooth, uniformly convex domain in $\HH$ and let $\Gamma \subset \partial X$ be a minimal period closed Reeb orbit. Then $\Gamma$ is Hopf.
\end{thm}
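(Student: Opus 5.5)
The plan is to pass to the Finsler side via Corollary \ref{cor:quaternion_cover} and reduce Hopfness of $\Gamma$ to the statement that $\Gamma$ projects to an \emph{embedded} closed geodesic on $S^2$. Let $F$ be the Finsler metric on $S^2$ corresponding to $X$, and let $\pi:\partial X\to S^*_FS^2$ be the two-sheeted strict contact covering. The covering is the quotient by the central symmetry $z\mapsto -z$, whose generator is $-1$ and which is contained in every $\C_J^\times$. Hence $\pi(\Gamma)$ is the cotangent lift of a closed $F$-geodesic $c$ on $S^2$. Moreover $\Gamma$ is either invariant under $-1$, so that it double covers $\pi(\Gamma)$, or it maps isomorphically onto $\pi(\Gamma)$.

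The first step is to show that $\Gamma$ is $(-1)$-invariant. Since $-1$ is a primitive square root of unity in $\C^2=\HH$ and $X$ is smooth, uniformly convex and centrally symmetric, Theorem \ref{thm:karasev} (Akopyan--Karasev) gives exactly this for minimal-period orbits. Consequently $\pi(\Gamma)$ is a closed Reeb orbit traversed once, $\Gamma=\pi^{-1}(\pi(\Gamma))$ is connected, and the period of $\Gamma$ is twice the $F$-length of $c$. Because $\Gamma$ is minimal, $c$ is a \emph{shortest} closed geodesic among those whose cotangent lifts have connected preimage in $\partial X$. By the homology computation $H_1(S^*S^2)=\Z/2$ used in the proof of Theorem \ref{thm:minimizer_bijection}, these are exactly the geodesics whose lifts are nontrivial in $H_1(S^*_FS^2)$.

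The main obstacle is the second step: proving that $c$ is embedded. I would argue as in Lemma \ref{lem:embedded_systoles}. If $c$ has a self-intersection, split it into two loops $c_1, c_2$, each strictly shorter than $c$, and compare their classes. The class of the tangent lift of a generic immersed loop in $S^2$ is its rotation number mod $2$, which is the number of double points plus one. Suppose the splitting produces a loop of odd rotation number that is strictly shorter than $c$. Then a curve-shortening or min-max argument in that free homotopy class of $S^*_FS^2$ should yield a closed geodesic whose lift lifts to a connected closed Reeb orbit of smaller action. This contradicts minimality.

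The cleaner route is to work directly with Clarke duality, as the earlier remark suggests. Minimal orbits on convex $\partial X$ minimize the Clarke dual action, and a minimizer of the Clarke functional has Conley--Zehnder index $3$. For the Finsler geodesic this gives Morse index $\le 1$, and one then argues by Lusternik--Fet-type reasoning that such a shortest geodesic has no self-intersections. This is where I expect the real work to lie. If it fails, I would fall back on a direct curve-surgery argument in $S^*_FS^2$.

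Once $c$ is embedded, Corollary \ref{cor:tangent_lifts_trans_isot} and Lemma \ref{lem:cot_lift_idep_F} show that $\pi(\Gamma)$ is transversely isotopic to the cotangent lift of a great circle for the round metric. This transverse isotopy lifts through the contact covering $\pi$ to a transverse isotopy of $\Gamma$, since both are connected preimages. Finally, by Theorem \ref{thm:quaternion_cover} the ball $B$ corresponds to a round metric, and the full preimage of a great-circle lift is a Reeb orbit on $\partial B$, that is, a Hopf fiber. Therefore $\Gamma$ is Hopf.
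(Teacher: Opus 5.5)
Your outer framework matches the paper's. Theorem~\ref{thm:karasev} makes $\Gamma$ centrally symmetric, so $\Gamma$ double covers the cotangent lift $\widetilde c$ of a closed geodesic $c$ on $S^2$. Once $c$ is known to be embedded, Corollary~\ref{cor:tangent_lifts_trans_isot} and Lemma~\ref{lem:cot_lift_idep_F} finish the proof. The gap is the step you yourself flag: embeddedness of $c$ is the entire content of the theorem, and neither route you sketch proves it.

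Route (a) relies on the principle that a loop shorter than $c$ with nontrivial tangent lift in $\pi_1(S^*_FS^2)$ forces a shorter closed geodesic with nontrivial lift. This is false. On the round sphere a tiny embedded circle has nontrivial tangent lift, yet every closed geodesic has length $2\pi$. Length-decreasing deformations need not stay immersed, so they do not preserve the regular homotopy class. Upstairs $\partial X\cong S^3$ is simply connected, so there is no class in which to minimize. Route (b) is not an argument either. The index information you invoke (Conley--Zehnder index $3$ for $\Gamma$, a Morse index bound for $c$) is local along the orbit and does not by itself rule out self-intersections of the projected geodesic.

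The missing idea is Lemma~\ref{lem:chord_bound}. If $X$ is centrally symmetric, smooth and uniformly convex, then every Reeb chord with endpoints on a Lagrangian subspace $L$ has action at least $\sys(X)/2$.
\begin{itemize}
\item To prove it, translate the chord so that the midpoint $w$ of its endpoints goes to the origin. Close it up with its image under $-1$, and use the resulting loop as a test loop in the Clarke dual characterization of $\sys(X)$.
\item Central symmetry gives Clarke action $4\mathcal{A}(c)^2$.
\item Because $L$ is Lagrangian, the correction term $\Omega(w,c(1/2)-c(0))$ vanishes, so the symplectic action is $2\mathcal{A}(c)$ and the ratio is $2\mathcal{A}(c)$.
\end{itemize}
Combine this with the fact that $\pi^{-1}(T^*_xS^2)$ is a $J$-complex line minus the origin, which is Lagrangian for $\Omega$. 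Suppose $c$ had a double point $x$.
\begin{itemize}
\item Then $\widetilde c$ would meet $T^*_xS^2$ in two distinct points.
\item Since $\Gamma$ double covers $\widetilde c$, $\Gamma$ would meet that plane in at least four points.
\item Cutting $\Gamma$ there into $k\ge 4$ chords gives $\sys(X)=\mathcal{A}(\Gamma)\ge \tfrac{k}{2}\sys(X)$, a contradiction.
\end{itemize}
This is the precise form your ``curve surgery'' would have to take. The pieces must be actual Reeb chords fed into the Clarke functional, rather than arbitrary shorter loops handled by a min-max argument.
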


\noindent The proof uses the Clarke dual formulation of the Ekeland-Hofer-Zehnder capacity (cf. \cite{artstein2008brunn,artstein2014bounds,haim2019symplectic}). Let us briefly recall the basic notation and results from Clarke duality.

\begin{remark}[Clarke Duality] \label{rmk:clarke_duality} Consider $\mathbb{R}^{2n}$ equipped with the standard symplectic form $\Omega$, standard radial Liouville form $\lambda_\HH$ and standard complex structure $I$. Given a smooth convex body $X \subset \R^{2n}$ containing the origin, denote the support function (or dual norm) by
\[
h_X:\R^{2n} \to [0,\infty)
\]The Clarke dual action $\mathcal{C}_X$ and the symplectic action $\mathcal{A}(z)$ of a piecewise smooth curve $z:[0,S] \to \R^{2n}$ are respectively given by
\[
\mathcal{C}_X(z) = \frac{1}{4} \cdot \int_0^S h_X(-I\dot{z}(t))^2 dt \qquad\text{and}\qquad \mathcal{A}(z) = \int_z \lambda_\HH
\]
The Ekeland-Hofer-Zehnder capacity of $X$ is then given by the following infimum.
\[
\sys(X) = \on{inf}\big\{\frac{\mathcal{C}_X(z)}{\mathcal{A}(z)} \; : \text{piecewise smooth } \; z:\R/\Z \to \R^{2n} \text{ with }\mathcal{A}(z) > 0\big\}
\]

The Clarke action of a Reeb segment with the appropriate parametrization takes a particularly simple form. Precisely, recall that the Reeb vector-field on $\partial X$ is given by
\[
R(x) = \frac{2I\nu(x)}{\langle x,\nu(x)\rangle}
\]
where $\nu(x)$ is the outward unit normal vector-field of $\partial X$ and $\langle-,-\rangle$ is the standard inner product. Moreover, for a point $x$ in $\partial X$, we have
\[
h_X(\nu(x)) = \langle \nu(x),x\rangle
\]
Therefore, if $z:[0,S] \to \partial X$ is a segment of trajectory of the Reeb vector-field, parametrized so that $\dot{z} = TR \circ z$ for $T > 0$, then the Clarke action is given by
\[
\mathcal{C}_X(z) = \frac{1}{4}\cdot \int_0^S h_X(-I\dot{z}(t))^2 dt = \frac{1}{4}\cdot \int_0^S h_X(-ITR(z(t)))^2 dt
\]
\[
= \frac{1}{4}\cdot \int_0^S h_X\Big(\frac{2T\nu(z(t))}{h_X(\nu(z(t)))}\Big)^2 dt = \frac{4T^2}{4} \cdot \int_0^S \frac{h_X(\nu(z(t)))^2}{h_X(\nu(z(t)))^2} dt = S \cdot T^2 
\]
Note that the same computation holds for any translate of a Reeb segment with the same parametrization.
\end{remark}

The key input from Clarke duality that we will need is the following bound for the Ekeland-Hofer-Zehnder capacity for a centrally symmetric domain. Similar arguments appeared in \cite{akopyan2017estimating} and \cite{jin2023coisotropic}.

\begin{lemma}[Chord Bound] \label{lem:chord_bound} Let $X$ be a centrally symmetric, smooth, uniformly convex domain in $\R^{2n}$ and let $c$ be a Reeb chord on $\partial X$ of action $T$ with endpoints on a Lagrangian sub-space $L \subset \R^{2n}$. Then
\[
2T \ge \sys(X)
\]
\end{lemma}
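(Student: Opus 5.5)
Given the chord $c:[0,S]\to\partial X$ with endpoints $c(0),c(S)\in L$ and action $T$, the plan is to close it up into a loop by concatenating it with a reflected copy of itself, and then estimate the resulting loop with the Clarke dual characterization of $\sys(X)$ from Remark \ref{rmk:clarke_duality}.

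\textbf{Construction of the loop.} Let $R_L$ be the anti-symplectic linear reflection of $\R^{2n}$ fixing $L$. After a unitary change of coordinates sending $L$ to $\R^n$, this is complex conjugation. It is tempting to use $-R_L c$, but $X$ is only centrally symmetric, so $-R_L(X)$ need not equal $X$. The translation remark at the end of Remark \ref{rmk:clarke_duality} avoids this problem: the Clarke action involves only $\dot z$ and $h_X$, so I do not need the second arc to lie on $\partial X$.

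Concretely, parametrize $c$ so that $\dot c = TR\circ c$ on $[0,1]$, using the identity from the remark with $S=1$. Let $z$ be the closed loop formed by $c$ followed by the curve $t\mapsto -c(t)+(c(0)+c(1))$ on $[1,2]$, traversed so that the endpoints match. Since $c(0),c(1)\in L$, the translation $c(0)+c(1)$ lies in $L$.

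\textbf{Clarke action of the second arc.} This arc is a translate of $-c$ with velocity $-\dot c$. Central symmetry gives $h_X(-I\dot z)=h_X(I\dot c)=h_X(-I\dot c)$. Hence the arc has the same Clarke action as $c$, namely $T^2$, and the total is $\mathcal{C}_X(z)=2T^2$ on a loop of period $2$.

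To compare with $\sys(X)$, I reparametrize to period $1$. Since $\mathcal{C}_X$ scales as $(\text{time})^{-1}$ under reparametrization while $\mathcal{A}$ is invariant, the normalized quantity is $4T^2$.

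\textbf{Symplectic action of $z$.} Write $z$ as $c$ concatenated with a translate of $c$ run backward after negation. A translate of $-c$ has $\lambda$-integral equal to $\int_c\lambda$ plus a boundary term involving $\Omega(c(0)+c(1),\cdot)$ evaluated on the chord displacement $c(1)-c(0)\in L$. Since $L$ is Lagrangian and both vectors lie in $L$, this boundary term vanishes. The closing-up terms likewise vanish because the endpoints lie in $L$. So $\mathcal{A}(z)=2T$.

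\textbf{Conclusion.} Combining the two computations,
\[
\sys(X)\le \frac{4T^2}{2T}=2T.
\]

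The main obstacle is the bookkeeping in the action computation. The orientation of the second arc must make $\mathcal{A}$ add rather than cancel, and the exact Lagrangian boundary terms (using $\lambda=\tfrac12\Omega(z,\cdot)$ up to sign) must be checked to vanish. If the orientation goes wrong, I would instead use $R_L$ applied to $c$ reversed, which adds actions but then forces a comparison of $h_X$ with $h_X\circ R_L$. That is why I prefer the translation approach above.
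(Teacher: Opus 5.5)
Your proposal is correct and is essentially the paper's proof. The paper likewise closes the chord with the translated point reflection $-c+(c(0)+c(1/2))$, but centers the loop by subtracting $w=(c(0)+c(1/2))/2$ and parametrizes the chord on $[0,1/2]$ instead of rescaling from period $2$; it then gets $\mathcal{C}_X(z)=4T^2$ from translation and central-symmetry invariance of the Clarke action, and $\mathcal{A}(z)=2T$ because the boundary term $\Omega(w,c(1/2)-c(0))$ vanishes on the Lagrangian subspace $L$. Your orientation worry does not arise: traversing $-c+(c(0)+c(1))$ forward in time is exactly what closes the loop, the two actions add, and there are no further closing-up terms.
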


\begin{proof} Suppose that $c$ is a Reeb chord on $\partial X$ beginning and ending on $L$, parametrized by $[0,1/2]$ so that $\dot{c} = 2TR \circ c$ where $T = \mathcal{A}(c)$. Let $z$ be the loop parametrized by $\R/\Z$ such that
\[
z(t) = c(t) - \frac{c(1/2) + c(0)}{2} \text{ for $t \in [0,1/2)$}\qquad \text{and}\qquad z(t) = -z(t-1/2) \text{ for $t \in [1/2,1)$}
\]
Note that $z(t)$ approaches $-z(0)$ as $t \to 1/2$ so that the loop is continuous. Moreover, the Clarke action is invariant under translation, and additionally invariant under the involution $x \mapsto -x$ due to the central symmetry of $X$. Thus the computation of the action in Remark \ref{rmk:clarke_duality} yields
\[
\mathcal{C}_X(z) = 2 \cdot \mathcal{C}_X(c) = 2 \cdot \frac{1}{2} \cdot (2\mathcal{A}(c))^2 = 4\mathcal{A}(c)^2
\]
Finally, we compute the symplectic action of $z$. Let $w = (c(1/2) + c(0))/2$. Using the usual formula for the standard Liouville form and the central symmetry of $X$, we have that
\[
\mathcal{A}(z) = \frac{1}{2} \int_0^1 \Omega(z(t),\dot{z}(t)) dt = \int_0^{1/2} \Omega(z(t),\dot{z}(t)) dt = \int_0^{1/2} \Omega(c(t),\dot{c}(t)) dt - \int_0^{1/2} \Omega(w,\dot{c}(t)) dt
\]
\[
= 2\mathcal{A}(c) - \Omega(w,c(1/2) - c(0))
\]
Now simply note that $w$ and $c(1/2) - c(0)$ are both contained in the Lagrangian subspace $L$ containing $c(1/2)$ and $c(0)$. Therefore the second term above vanishes and $\mathcal{A}(z) = 2\mathcal{A}(c)$. Thus
\[
\sys(X) \le \frac{\mathcal{C}_X(z)}{\mathcal{A}(z)} = \frac{4\mathcal{A}(c)^2}{2\mathcal{A}(c)} = 2\mathcal{A}(c) \qedhere
\]\end{proof}

\noindent We can now prove the main result on minimal orbits of centrally symmetric domains.

\begin{proof}[Proof of Theorem \ref{thm:body_Hopf_minimizer}] Let $\Gamma$ be a minimizing Reeb trajectory on the boundary of a smooth uniformly convex and centrally symmetric domain $X \subset \HH$.  We first claim that $\Gamma$ can intersect any $J$-complex line $L \subset \HH$ at most twice. Indeed, suppose otherwise. Then $\Gamma$ decomposes into a composition
\[\Gamma = c_1 \star c_2 \star \dots \star c_k\]
where $k > 2$ and $c_i$ is a Reeb chord beginning and ending on $L$. Since $L$ is a Lagrangian subspace, we may apply Lemma \ref{lem:chord_bound} to find that
\[
\on{sys}(X) = \mathcal{A}(\Gamma) = \sum_i \mathcal{A}(c_i) \ge \frac{k}{2} \cdot \on{sys}(X)
\]
This is a contradiction. This shows that $\Gamma$ must intersect $L$ at most twice.

\vspace{3pt}

Now let $\gamma$ be the corresponding geodesic in $S^2$ of the associated Finsler metric $F$ to $X$ and consider the cotangent lift $\widetilde{\gamma}$ of $\gamma$ to $S^*_FS^2 \subset T^*S^2$. Recall that the cotangent lift is double covered by $\Gamma$ via the covering map from Theorem \ref{thm:quaternion_cover},
\[
\pi:\HH^\times \to T^*S^2.
\]
Here we used the fact that $\Gamma$ is centrally symmetric due to Theorem~\ref{thm:karasev}.
Moreover, the inverse image of a cotangent fiber under $\pi$ is precisely a $J$-complex line minus the origin. Therefore, $\widetilde{\gamma}$ intersects each cotangent fiber at most once by the previous claim. Therefore the projection $\gamma$ to $S^2$ is embedded.  Corollary \ref{cor:tangent_lifts_trans_isot} then implies that $\gamma$ is isotopic to a great circle, and thus the lift $\Gamma$ to $\partial X$ is isotopic to the lift of a great circle. Since the lift is independent of the metric up to transverse isotopy (Lemma~\ref{lem:cot_lift_idep_F}), it is a Hopf fiber.\end{proof}

\bibliographystyle{hplain}
\bibliography{dyn_cvx_2_bib}

\end{document}